\documentclass[12pt,reqno]{amsart}

\usepackage[normalem]{ulem}
\usepackage[T1]{fontenc}
\usepackage{enumitem}
\usepackage{hyperref}
\hypersetup{
  colorlinks   = true,
  citecolor    = blue,
  linkcolor    = blue
}

\usepackage{amsmath,amsfonts,amsthm,amssymb,color,tikz, comment}
\usepackage{mathrsfs}   
\usepackage{comment}
\usepackage{pdfsync}
\usepackage[font={scriptsize}]{caption}
\usepackage{ulem}

\usepackage[left=1in, right=1in, top=1.1in,bottom=1.1in]{geometry}
\def\N{{\mathbb N}}
\def\R{{\mathbb R}}

\def\eps{\varepsilon}
\def\E{{\mathbb E}}
\def\P{{\mathbb P}}
\def\Z{{\mathbb Z}}

\newtheorem{theorem}{Theorem}[section]

\newtheorem{conjecture}[theorem]{Conjecture}

\newtheorem{assumption}[theorem]{Assumption}
\newtheorem{lemma}[theorem]{Lemma}

\newtheorem{proposition}[theorem]{Proposition}

\theoremstyle{remark}

\theoremstyle{remark}

\begin{document}

\title[Polarity of points for Gaussian random fields]{Polarity of
points for Gaussian random fields in critical 
dimension}

\author[Y. Hakiki]{Youssef Hakiki}
\address{Department of Mathematics, Purdue University, West 
Lafayette, IN 47907, United States}
\email{yhakiki@purdue.edu}

\author[C. Y. Lee]{Cheuk Yin Lee}
\address{School of Science and Engineering, The Chinese University of Hong Kong (Shenzhen), Longgang, Shenzhen, Guangdong, 518172, China}
\email{leecheukyin@cuhk.edu.cn}

\author[Y. Xiao]{Yimin Xiao}
\address{Department of Statistics and Probability, Michigan 
State University, East Lansing, MI 48824, United States}
\email{xiaoy@msu.edu}

\keywords{Hitting probabilities, polarity of points, critical 
dimension, Gaussian random fields, Hausdorff measure}

\subjclass[2010]{
60G15; %Gaussian processes
60G60; %random fields
60J45; %probabilistic potential theory
28A78%Hausdorff and packing measures
}

%\keywords{}
%
%\subjclass[2010]{}

\begin{abstract}
We study the property of hitting points for a class of $\R^d$-valued continuous Gaussian random fields on $\R^N$ with 
stationary increments, i.i.d.~coordinates, and a regularly 
varying variance function $\sigma$ of index $0<H<1$. We first 
prove that if
\[
 	\lim_{r\to 0^+} \frac{r^N}{\sigma^d\left(r\left( \log\log\frac{1}{r}\right)^{-1/N}\right)} = \infty,
\]
then every fixed point is polar (i.e., not hit almost surely). 
In general, this criterion may not be optimal in the critical dimension $d=N/H$. To aim for an optimal condition, we consider 
the specific case $\sigma(r) = r^H (\log(1/r))^\gamma$ and prove
that, in the critical dimension $d=N/H$, points are polar if and 
only if $\gamma \le 1/d$, or equivalently in this specific case,
\[
 	\int_{0^+} \frac{r^{N-1}}{\sigma^d(r)} dr = \infty.
\]
This integral condition is also necessary for points to be polar 
under general assumptions. Our main contribution lies in the 
proof of sufficiency of this condition in the specific case, 
where we extend a covering argument of Talagrand (1998) based on
sojourn time estimates to obtain Hausdorff measure bounds and 
solve polarity of points in the critical dimension.
% We study the polarity of points for a broad class of Gaussian random fields with stationary increments in the critical dimension. Our work extends previous results on the critical dimension—particularly those of Dalang, Mueller, and Xiao [Ann. Probab. 45 (2017), no. 6B, 4700–4751]—by considering Gaussian random fields whose variance functions are regularly varying, with a slowly varying component that may diverge at the origin. We prove that points are polar in the critical dimension under certain conditions on the variance function. Our approach extends the covering argument, based on optimal sojourn time estimates, developed by M. Talagrand [Probab. Theory Relat. Fields 112 (1998), 545–563] for multiparameter fractional Brownian motion. The results are applicable to a large class of Gaussian random fields.
\end{abstract}

\maketitle
% \tableofcontents

\section{Introduction}

Consider an $\R^d$-valued continuous Gaussian random field
$X = \{ X(t) , t \in \R^N\}$ on a complete probability space
$(\Omega,\mathscr{F}, \P)$ with $X(0)=0$.
For any compact set $A \subset \R^d$, we say that $A$ is 
\emph{polar} for $X$ if $\P\{ \exists\, t \in \R^N \setminus
\{0\}$ such that $ X(t) \in A \} = 0$. 
In particular, we say that \emph{points are polar} for $X$ if 
every fixed point in $\R^d$ is polar for $X$, i.e., 
\[
\P\{ 
\exists \, t \in \R^N \setminus\{0\} \text{ such that } X(t) = 
z\} = 0 \quad \forall z \in \R^d.
\]
It is an important and challenging problem in probabilistic
potential theory to determine the polarity of a given set $A$ for 
a Gaussian random field. Except for the seminal work of 
Khoshnevisan and Shi \cite{KS99} for the Brownian sheet, this problem has 
not 
been resolved completely for other Gaussian random fields
and has attracted a lot of attention in recent years. We 
refer to 
\cite{BLX09,EH25,HS,K85,LSXY23,LX26,Te86,X99,X09} for necessary conditions and 
sufficient conditions for $A$ to be polar for Gaussian random fields, and to 
\cite{DKN07,DKN09,DKN13,DN04,DS10, DS15} for related results for the solutions 
of systems of stochastic partial differential equations (SPDEs). 

In the special case when $A$ is a singleton, typically there is a critical 
value $d_c$, which is called the \emph{critical dimension}, such that points 
are polar if $d>d_c$ and non-polar if $d<d_c$. When $d=d_c$, it is 
usually more difficult to determine whether or not points are 
polar. 
For example, if $X=\{X(t), t \in \R^N \}$ is a $d$-dimensional fractional 
Brownian motion with Hurst index $H \in (0,1)$, then points are polar if $d 
>N/H$ and non-polar if $d < N/H$ (see \cite{K85, Te86}). 
Dalang, Mueller, and Xiao \cite{DMX17} proved 
that points are polar in the critical case $d=N/H$ by extending the random covering argument of Talagrand \cite{T95}.
%based on minimal oscillations as characterized by Chung's law of the iterated logarithm or small ball probabilities. 
%The argument of Talagrand \cite{T95} was also applied in \cite{X97} to study the exact Hausdorff measure of the level set $X^{-1}(z)$ when $d<N/H$.

The goal of this paper is to investigate the polarity of points, including the case of critical dimension, for a class of Gaussian random fields with stationary increments and a regularly varying variance function.

Let $X=\{X(t), t \in \R^N\}$ be an $\R^d$-valued continuous centered Gaussian random field defined on a complete probability space $(\Omega, \mathscr{F}, \P)$ that satisfies $X(0)=0$ and has stationary increments, i.i.d.~coordinates $X(t) = (X_1(t),\dots, X_d(t))$, and a continuous covariance function $R(s,t) := \E[X_1(s)X_1(t)]$.
It follows from Yaglom \cite{Yaglom1957, Yaglom1987} that $R$ can be written as
\begin{align*}
	R(s,t)=s' M t + \int_{\R^N} (e^{is\cdot\xi}-1)(e^{-it\cdot\xi}-1) m(d\xi)
\end{align*}
for some $N\times N$ nonnegative definite matrix $M$ and nonnegative symmetric measure $m$ on $\R^N\setminus\{0\}$ (called the \emph{spectral measure} of $X$) satisfying
\begin{align*}
\int_{\R^N} \frac{|\xi|^2}{1+|\xi|^2} m(d\xi) < \infty.
\end{align*}

We will make use of the following assumptions.
\begin{assumption}\label{a:X}
$M=0$, so that
\begin{align}\label{E:cov}
	R(s,t)=\int_{\R^N} (e^{is\cdot\xi}-1)(e^{-it\cdot\xi}-1) m(d\xi).
\end{align}
Moreover, there exist $\delta_0>0$ and a nondecreasing, continuous, regularly varying function $\sigma: [0, \delta_0] \to [0, \infty)$ with $\sigma(0)=0$ of the form
\begin{align}\label{sigma}
	\sigma(r) = r^H L(r), \quad r\in(0, \delta_0],
\end{align}
for some constant $H\in(0,1)$ and slowly varying function $L: (0,\delta_0] \to (0,\infty)$, and there exists a constant $0<c_1<\infty$ such that 
\begin{align}\label{variogram}
	d(s,t) \le c_1 \sigma(|s-t|)
\end{align}
uniformly for all $s, t\in \R^N$ with $|t-s|\le \delta_0$, where $d$ is the canonical metric defined by $d(s,t) = (\E[(X_1(s)-X_1(t))^2])^{1/2}$.
\end{assumption}

\begin{assumption}\label{a:var}
    $\mathrm{Var}(X_1(t)) > 0$ for all $t \in \R^N \setminus\{0\}$.
\end{assumption}

\begin{assumption}\label{a:SLND}
There exists a constant $0<c_2<\infty$ such that
\begin{align}\label{SLND}
	\mathrm{Var}(X_1(t)|X_1(s) : r \le |s-t|\le \delta_0) \ge c_2 \sigma^2(r)
\end{align}
uniformly for all $t\in \R^N$ and $r\in (0, |t|\wedge \delta_0]$.
\end{assumption}

Note that \eqref{variogram} and \eqref{SLND} together imply $d(s,t)=\E[(X_1(t)-X_1(s))^2]\big) \asymp \sigma^2(|s-t|)$ for all $s, t \in \R^N$ with $|s-t|\le \delta_0$.
Gaussian random fields satisfying \eqref{variogram} in Assumption \ref{a:X} are called \emph{approximately isotropic} and those satisfying Assumption \ref{a:SLND} are called \emph{strongly locally $\sigma$-nondeterministic} (see \cite{X96}). 
The class of Gaussian random fields satisfying Assumptions \ref{a:X} and \ref{a:SLND} is large. It includes fractional Brownian motion, the fractional Riesz-Bessel motion \cite{Anh99, X07}, the solution $\{u(t, x), t \ge 0, x \in \R^d\}$ to SPDE with the generator of a L\'evy process and 
additive fractional-colored Gaussian noise (viewed as a random field in the 
space variable $x$, when $t>0$ is fixed) \cite{FKN11, HSWX}, and the examples given in \cite[Chapter 7]{MarcusRosen06}. 
For these Gaussian random fields, Assumption \ref{a:X} can be verified by using a stochastic 
integral representation (see \eqref{E:spectral2} below) and the asymptotic properties of the
spectral measure at infinity (cf. \cite[Theorem 1]{Pitman68} or more 
generally, \cite[Theorem 2.5]{X07}); Assumption \ref{a:SLND} can be verified by using the Fourier-analytic method for 
proving the strong local nondeterminism in \cite{Pitt78, X07}. Moreover, 
given any regularly varying function $\sigma$ of the form \eqref{sigma} such 
that the slowly varying function $L(r)$ is eventually monotone, then by 
\cite[Theorem 5]{Pitman68} and a Tauberian theorem, we can find a corresponding Gaussian random field satisfying Assumptions \ref{a:X} and \ref{a:SLND}. We refer to \cite{X07} for more information. 

%, and the examples given in \cite[Chapter 7]{MarcusRosen06}. 
%Conditions (i) and (ii) for these Gaussian random fields can be established by using the stochastic integral representation \eqref{E:spectral2}, the asymptotic properties of the spectral measure $m$ at infinity, \cite[Theorem 1]{Pitman68} or more generally, \cite[Theorem 2.5]{X07}, and the Fourier-analytic method for 
%proving the strong local nondeterminism in \cite{Pitt78, X07}. 
%Moreover, given any regularly varying function $\sigma$ of the form (\ref{sigma}) such 
%that the slowly varying function $L(r)$ is eventually monotone, then by 
%\cite[Theorem 5]{Pitman68} and a Tauberian theorem, we can find a spectral 
%measure $m$ such that the corresponding Gaussian random field satisfies 
%Conditions (i) and (ii). 
%We refer to \cite{X07} for more information. 

Our first theorem provides a sufficient condition for points to be polar 
for $X$. 
%One can check that Condition \eqref{suff-cond} below implies \eqref{Eq:Conj}.

\begin{theorem}\label{T:polar:suff}
Let Assumptions \ref{a:X} and \ref{a:var} hold. If
\begin{align}\label{suff-cond}
\lim_{r\to 0^+} \frac{r^N}{\sigma^d\left(r\left( \log\log\frac{1}{r}\right)^{-1/N}\right)} = \infty,
\end{align}
then points are polar for $X$.
\end{theorem}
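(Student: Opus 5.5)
Our plan is to adapt the random covering argument of Talagrand \cite{T95} and Dalang, Mueller and Xiao \cite{DMX17}, which uses Chung-type laws of the iterated logarithm (small oscillations). Only Assumptions \ref{a:X} and \ref{a:var} are available, so we will not rely on strong local nondeterminism. Instead we use the stochastic integral (harmonizable) representation of $X$ coming from \eqref{E:cov}, namely $X_1(t)=\int_{\R^N}(e^{it\cdot\xi}-1)W(d\xi)$ with $W$ a complex Gaussian measure with control measure $m$. This lets us split $X$ into independent pieces according to frequency shells. Fix $z\in\R^d$ and a compact set $T\subset\R^N\setminus\{0\}$ (say a closed cube avoiding the origin); by countable additivity it suffices to show $\P\{\exists t\in T: X(t)=z\}=0$. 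By Assumption \ref{a:var} and continuity, $\mathrm{Var}(X_1(t))$ is bounded below on $T$. Hence the one-point density of $X(t)$ is uniformly bounded on $T$, and $\P\{|X(t)-z|\le \rho\}\le C\rho^d$ uniformly in $t\in T$.

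\textbf{Step 1 (small oscillations at random scales).} Fix $t_0\in\R^N$. Choose a rapidly decreasing sequence of scales $r_k$, e.g.\ $r_k=\exp(-k^{1+\beta})$. Using the spectral representation, write $X=X^{(k)}+\tilde X^{(k)}$, where $X^{(k)}$ collects the frequencies $|\xi|\in[r_{k-1}^{-1},r_{k+1}^{-1}]$; the pieces $X^{(k)}$ are independent along a sparse subsequence of $k$. The remainder $\tilde X^{(k)}$ contributes little oscillation on the ball $B(t_0,r_k)$: the low frequencies are smooth, contributing $O(r_k/r_{k-1})\sigma(r_{k-1})$, and the high frequencies have small variance, $\lesssim\sigma(r_{k+1})$. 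These estimates use \eqref{variogram} and the regular variation of $\sigma$ (Potter bounds).

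For $X^{(k)}$ we need a small ball lower bound. The ball $B(t_0,r)$ is covered by $n^N$ subcubes of side $r/n$. On each subcube the oscillation is about $\sigma(r/n)$, and the Gaussian correlation inequality (Royen), or the Talagrand/Stolz small ball lower bound via metric entropy, gives
\[
\P\Bigl\{\sup_{|t-t_0|\le r}|X(t)-X(t_0)|\le c\,\sigma(r/n)\Bigr\}\ge \exp(-C n^N).
\]
With $n^N\asymp \epsilon\log\log(1/r)$ and independence across the sparse $k$, a Borel--Cantelli argument gives the following: almost surely, for a.e.\ $t_0$ (in Lebesgue measure, by Fubini), there are infinitely many $k$ with
\[
\sup_{|t-t_0|\le r_k}|X(t)-X(t_0)|\le C\,\sigma\bigl(r_k(\log\log(1/r_k))^{-1/N}\bigr).
\]

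\textbf{Step 2 (covering).} Fix $\delta>0$. By Step 1 and Fubini, with high probability the random set of "good" points of $T$ has nearly full Lebesgue measure. By a Vitali-type selection, we cover most of $T$ by disjoint good balls $B(t_i,r_i)$ with $r_i\in[\delta^2,\delta]$, and cover the exceptional part by the grid of cubes of side $\delta^2$ (their number is small in expectation). For a good ball, $X(B_i)\subset B(X(t_i),\rho_i)$ with $\rho_i=C\sigma(r_i(\log\log(1/r_i))^{-1/N})$. Then
\[
\E\#\{i: X(B_i)\ni z\}\lesssim \sum_i \rho_i^d\lesssim \sum_i r_i^N\cdot\frac{\sigma^d\bigl(r_i(\log\log(1/r_i))^{-1/N}\bigr)}{r_i^N}\lesssim |T|\,\sup_{r\le\delta}\frac{\sigma^d\bigl(r(\log\log\frac1r)^{-1/N}\bigr)}{r^N},
\]
which tends to $0$ as $\delta\to0$ by \eqref{suff-cond}. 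For the leftover cubes of side $\delta^2$, the hitting probability for each cube is at most $C\sigma(\delta^2)^d\log(1/\delta)^{c}$ (via a sup bound for Gaussian oscillation). Their count is at most $\eta(\delta)\delta^{-2N}$, where $\eta(\delta)$ is the measure of the bad set. Since $\sigma(\delta^2)^d\approx\delta^{2N}$ in the critical regime, we need $\eta(\delta)$ to decay faster than a log; we will arrange this by quantifying the Borel--Cantelli step, requiring the bad set to have measure $\le \exp(-(\log(1/\delta))^{c'})$. To make the expectation computation legitimate, the random choice of balls must be handled as in \cite{DMX17}: we condition on the centers being measurable functions and use that $X(t_i)$ has bounded density given the selection. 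The small oscillation event depends on local increments, so we decouple it from $X(t_i)$ using the independence structure of the frequency decomposition.

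\textbf{Main obstacle.} The hardest step is Step 1. Without strong local nondeterminism, we must obtain independence across scales and a sharp $\exp(-Cn^N)$ small ball lower bound purely from the spectral representation and \eqref{variogram}. We also need the failure probability of having no good scale in $[\delta^2,\delta]$ to be small enough to beat the count of exceptional cubes. We would first try the frequency-shell decomposition with super-exponentially separated scales, which makes the shell pieces exactly independent, and bound the cross terms using the regular variation of $\sigma$.
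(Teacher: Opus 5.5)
Your Step 1 is the small-oscillation machinery of \cite{X96,X97} that the paper also relies on. The paper quotes from \cite[p.~152]{X97} that the good set with scales in $[2^{-2p},2^{-p}]$ has measure at least $\lambda_N(I)(1-e^{-\sqrt{p}/4})$ off events of summable probability. Your bookkeeping for the leftover cubes also matches the paper's.

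The genuine gap is in Step 2, in the inequality
\[
\E\,\#\{i : z\in X(B_i)\} \lesssim \E\sum_i \rho_i^d .
\]
The bound $\P\{|X(t)-z|\le\rho\}\le C\rho^d$ holds for deterministic $t$. Your centers $t_i$ and radii $r_i$, however, are selected from the sample path of $X$. To use the bound at random $t_i$ you need a component of $X(t_i)$ that is independent of the selection and has a bounded density.

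The frequency decomposition, as you set it up, does not provide one. Your good event at scale $r_k$ requires the remainder $\tilde X^{(k)}$, low frequencies included, to have small oscillation. The Vitali selection over all scales in $[\delta^2,\delta]$ therefore depends on every frequency band, including those carrying the bulk of $\mathrm{Var}(X_1(t_i))$. So ``$X(t_i)$ has bounded density given the selection'' is precisely the unproved point.

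This point is the entire difference between $\lambda_d(X(T))=0$ a.s.\ and polarity of every fixed $z$. Your covering does yield the former, since the covering sums tend to $0$ a.s.; Fubini then only gives polarity for almost every $z$. The same objection applies to your hitting-probability bound for the leftover cubes.

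The missing idea, which the paper adapts from \cite{DMX17}, is to split off a single Gaussian vector: $X(t)=X^{(1)}(t)+\alpha(t)X(t_0)$, with $\alpha(t)=\E[X_1(t)X_1(t_0)]/\E[X_1(t_0)^2]$ and $X^{(1)}$ independent of $X(t_0)$. The argument then runs as follows.
\begin{enumerate}
\item By Assumption~\ref{a:var} and continuity, $\alpha\in[1/2,3/2]$ on a small interval $I$ around $t_0$. Moreover $\alpha$ is H\"older of order $2H\wedge 1>H$ \cite[Lemma 4.2]{X97}.
\item Hence on $\{\sup_I|X|\le 2^{\eta p}\}$, with $H+\eta<2H\wedge1$, the term $\alpha(t)X(t_0)$ oscillates by $o\bigl(\sigma(r(\log\log\frac1r)^{-1/N})\bigr)$ on balls of radius $r\in[2^{-2p},2^{-p}]$. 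So the good set of $X$ is contained in a good set of $X^{(1)}$ with a larger constant.
\item The covering is built from $X^{(1)}$ alone. One proves that $X^{(3)}=(z-X^{(1)})/\alpha$ has range of Lebesgue measure zero a.s.; this is a pathwise statement needing no hitting probabilities.
\item Fubini gives $\P\{\exists\,t\in I: X^{(3)}(t)=x\}=0$ for a.e.\ $x$. Since $X(t)=z$ iff $X^{(3)}(t)=X(t_0)$, and $X(t_0)$ is independent of $X^{(3)}$ and absolutely continuous, integrating against the density of $X(t_0)$ gives $\P\{\exists\,t\in I: X(t)=z\}=0$.
\end{enumerate}

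In your frequency language, a low-frequency piece $Y_0(t)=\int_{|\xi|<a}(e^{it\cdot\xi}-1)\,W(d\xi)$ (coordinatewise), with $a$ below all shells used, could play the role of $\alpha(t)X(t_0)$. By monotone convergence and Dini's theorem, its one-point variance is bounded below on $T$ for large $a$. But then the selection must be made measurable with respect to $X-Y_0$ only, and the oscillation of $Y_0$ must be controlled on a separate global event. Your proposal sets up none of this.
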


Theorem \ref{T:polar:suff} is proved by an extension of the covering argument in \cite{DMX17,T95,X97} which is based 
on small oscillations characterized by Chung's law of the 
iterated logarithm or small ball probabilities (see Section \ref{S:pf:T1}).
As a consequence of Theorem \ref{T:polar:suff}, if $d>N/H$, then points are polar.
However, in the critical dimension $d=N/H$, the criterion \eqref{suff-cond} does not give an optimal condition for the polarity of points.
To illustrate this, suppose Assumptions \ref{a:X}--\ref{a:SLND} hold with
\begin{align}\label{sigma:gamma}
	\sigma(r) = r^H \left( \log \frac{1}{r} \right)^\gamma, \quad \text{where $H \in (0,1)$ and $\gamma \in \R$.}
\end{align}
If $d=N/H$ and $\gamma \le 0$, then \eqref{suff-cond} holds, hence points are polar by Theorem \ref{T:polar:suff}; if $d=N/H$ and $\gamma > 0$, then \eqref{suff-cond} does not hold:
\begin{align*}
\lim_{r\to 0^+} \frac{r^N}{\sigma^d\left(r\left( \log\log\frac{1}{r}\right)^{-1/N}\right)} = \lim_{r\to 0}\frac{\log\log\frac{1}{r}}{\left(\frac{1}{N} \log\log\log\frac{1}{r} + \log\frac{1}{r} \right)^{\gamma d}} = 0.
\end{align*}
However, it is known \cite{Berman69, Pitt78, GH80} that, under Assumptions \ref{a:X}--\ref{a:SLND}, $X$ has a square-integrable local time on any interval $I \subset \R^N$ if and only if
\begin{equation}\label{Eq:LT}
\int_I \int_I \frac{dt\, ds}{\big[\E(X_1(s) - X_1(t))^2\big]^{d/2}} < \infty,
\end{equation}
which, under the additional assumption of \eqref{sigma:gamma} and $d=N/H$, is equivalent to $\gamma d \le 1$.
Hence, in this case, we conjecture that points are polar if and only if $\gamma \le 1/d$.

In order to verify this conjecture, we replace the covering argument based on small oscillations by a new covering argument, which is an extension of Talagrand's covering argument based on sojourn time 
estimates \cite{T98}, which is more effective in 
the critical case $d=N/H$ than that in \cite{DMX17} and is the main 
contribution of the present paper. 
In particular, this new covering argument yields a more precise bound for the Hausdorff measure of the range $X(I)$.
Recall that the \emph{Hausdorff measure} of a set $A \subset \R^d$ with respect to a gauge function $\phi(r)$ is defined by
\[
    \mathcal{H}^\phi(A) = \lim_{\delta \to 0^+} \inf\left\{ \sum_{n=1}^\infty \phi(\operatorname{diam}{U_n}) : \bigcup_{n=1}^\infty U_n \supset A \ \text{ with }\ \sup_{n} \operatorname{diam}{U_n} \le \delta \right\},
\]
where $\operatorname{diam}$ denotes diameter; see \cite{Falconer, Mattila} for more information.

\begin{theorem}\label{T:Haus:meas}
Let Assumptions \ref{a:X} and \ref{a:SLND} hold with $\sigma$ given by \eqref{sigma:gamma}, where $d=N/H$ and $\gamma \le 1/d$. Let $I$ be a compact interval in $\R^N$. Then a.s., the range $X(I)$ has finite Hausdorff measure with respect to the gauge function $\phi(r)$ defined by
\begin{align}\label{Eq:phi_def}
    \phi(r) =  r^d (\log(1/r))^{1-\gamma d}\log\log\log(1/r).
\end{align}
In particular, $X(I)$ has Lebesgue measure zero a.s.
\end{theorem}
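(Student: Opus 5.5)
The plan is to adapt Talagrand's covering argument from \cite{T98}, which is built on sojourn times. The quantity to control is the occupation time of small balls,
\[
T(t,r)=\lambda_N\{s\in I: |X(s)-X(t)|\le r\}.
\]
The key heuristic is that $\E\, T(t,r)$ should behave like
\[
\int_{|u|\le\delta} \min\Bigl(1, \frac{r^d}{\sigma^d(|u|)}\Bigr)\,du \asymp r^d \int_{\rho(r)}^{\delta}\frac{s^{N-1}}{\sigma^d(s)}\,ds .
\]
Here $\rho(r)$ is the inverse of $\sigma$. With $d=N/H$ and $\sigma(s)=s^H(\log(1/s))^\gamma$, the integrand equals $s^{-1}(\log 1/s)^{-\gamma d}$. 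For $\gamma d<1$ the integral is therefore $\asymp (\log(1/r))^{1-\gamma d}$, and for $\gamma d=1$ it is $\asymp\log\log(1/r)$; the plan would treat that boundary case with the same scheme, interpreting the gauge accordingly. So
\[
\E\, T(t,r)\asymp r^d(\log 1/r)^{1-\gamma d}=\phi(r)/\log\log\log(1/r).
\]
The aim is a ``good point'' statement: almost surely, for a.e.\ $t\in I$ (in Lebesgue measure), there are infinitely many dyadic $r$ with
\[
T(t,r)\ge c\,\phi(r) \quad\text{where}\quad \phi(r)=r^d(\log 1/r)^{1-\gamma d}\log\log\log(1/r),
\]
i.e.\ the sojourn time beats its mean by a triple-log factor.

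\textbf{Step 1 (moments and large sojourn times).} Using \eqref{SLND} through a chaining and conditioning argument, bound the moments $\E[T(t,r)^k]\le C^k k!\,(\E T)^k$. Then prove the lower-tail-in-probability statement
\[
\P\{T(t,r)\ge c\,\E T(t,r)\cdot \log\log\log(1/r)\}\ge (\log\log 1/r)^{-1/2}
\]
or similar. Following \cite{T98}, decompose $T(t,r)$ over dyadic annuli $\{2^{-j-1}<|s-t|\le 2^{-j}\}$, with $j$ ranging between $\log(1/\delta)$ and $\log(1/\rho(r))$. Each annulus contributes roughly $r^d(\log 2^j)^{-\gamma d}$. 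Group the annuli into blocks on which the contributions are approximately independent; the spectral representation \eqref{E:cov} lets one split $X$ into independent frequency bands, as in \cite{T98,DMX17}. A large sojourn time then arises when, on about $\log\log\log$ consecutive blocks, the increments of $X$ stay anomalously small. The number of blocks is of order $\log\log(1/r)$, because the weights $(\log 2^j)^{-\gamma d}$ are log-scale and force grouping at scale $\log j$. A Borel--Cantelli argument over independent scales $r_k$, together with Fubini in $t$, then yields the good-point statement. \emph{This step is the main obstacle}: one must match the independent-band decomposition to the slowly varying weights so that the deviation gain is exactly $\log\log\log$, while keeping the error terms from band truncation (controlled by \eqref{variogram} and the modulus of continuity of $X$) below $r$.

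\textbf{Step 2 (covering).} For a fixed small $\delta$, run a Vitali-type selection. For each good $t$, choose an admissible $r=r(t)\le\delta$ with large sojourn. Cover $X(I)$ as follows. The good points are covered by balls $B(X(t),2r(t))$ chosen disjoint in the sense that their preimage sojourn sets are disjoint, so that
\[
\sum \phi(r)\le c^{-1}\sum T(t,r)\le c^{-1}\lambda_N(I).
\]
The bad set has small Lebesgue measure, at most $\eta$, and its image is covered by small cubes using the uniform modulus of continuity $|X(s)-X(t)|\lesssim \sigma(|s-t|)\sqrt{\log(1/|s-t|)}$. Its contribution is handled by a secondary iteration as in \cite{T98}, where the bad set is revisited at finer scales until the remaining cost is $o(1)$. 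Letting $\delta\to0$ gives
\[
\mathcal H^\phi(X(I))\le C\lambda_N(I)<\infty \quad\text{a.s.}
\]

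\textbf{Step 3.} Since $\phi(r)/r^d\to\infty$, finiteness of $\mathcal H^\phi(X(I))$ forces $\lambda_d(X(I))=0$, which gives the final claim.
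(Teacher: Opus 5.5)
Your overall architecture matches the paper's: a Talagrand-type sojourn covering, Vitali selection in $\R^d$, Borel--Cantelli over independent scales, and the modulus of continuity for the leftover. But Step 1, which you rightly flag as the crux, has a genuine gap.

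\textbf{The tail lower bound in Step 1.} Your moment upper bound $\E[T^k]\le C^k k!\,(\E T)^k$ only controls $T$ from above. Everything hinges on a lower bound for $\P\{T\ge u\,\E T\}$, and the block mechanism you propose for it does not work, for two reasons.
\begin{itemize}
\item The annulus contributions are not approximately independent. For $|s-t|\approx 2^{-j}$, the event $|X(s)-X(t)|\le r$ depends on all frequencies up to order $1/\sigma^*(r)$, so every annulus shares the top band.
\item For $\gamma d<1$ the block means $r^d 2^{m(1-\gamma d)}$ grow geometrically. The mean therefore sits in the last few blocks, and small increments on $\log\log\log$ blocks give no multiplicative $\log\log\log$ gain.
\end{itemize}
Forcing small increments is the small-oscillation idea behind Theorem~\ref{T:polar:suff}, whose gauge $\asymp r^d(\log 1/r)^{-\gamma d}\log\log(1/r)$ is far too small here.

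The missing ingredient is a matching lower bound on moments: $\E[T_\eps^n]\ge (C_3 n\eps^d\Psi(\eps))^n$ for $n=2^p\le \log\log\log(1/\eps)$. The paper proves it in three moves.
\begin{itemize}
\item It chains the points along a binary tree, requiring $|X(t_1)|\le 2^{-p}\eps$ and $|X(t_i)-X(t_{a(i)})|\le 2^{k-p}\eps$.
\item It factorizes the resulting probability with the Gaussian correlation inequality.
\item It integrates over a set $D$ of points in nested, pairwise disjoint shells $\eps_{k+1}\le |t_i-t_{a(i)}|\le \eps_k/4$. The $(2^k)!$ matchings among these shells supply the factor $n^n$.
\end{itemize}
Paley--Zygmund applied to $T_\eps^n$, together with the upper bound, then gives $\P\{T_\eps\ge u\eps^d\Psi(\eps)\}\ge \frac14 e^{-K_1 u}$ for $1\le u\le K_2\log\log\log(1/\eps)$.

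\textbf{Independence across scales.} Your $T(t,r)$ integrates over all of $I$. Such a functional cannot be approximated within $r$ by a field built from a frequency band bounded away from $0$, because frequencies up to $a$ contribute of order $|s-t|\,a\,\sigma(1/a)$. So the independent scales your Borel--Cantelli step needs are not available. The paper truncates the sojourn to $|s-t|\le r^\beta$ with $1<\beta<1/H$. Only then does Lemma~\ref{L:error:bd} let one replace $X$ by $X(a_\ell,b_\ell,\cdot)$ at scale $r_\ell=2^{-\zeta^\ell}$, with $b_\ell\le a_{\ell+1}$ giving independence.

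\textbf{The boundary case $\gamma=1/d$.} The truncation also settles your boundary case. For $\gamma=1/d$, the window $[\sigma^*(r),r^\beta]$ carries only $\log(1/(H\beta))$ of the $\log\log$ mass, so the truncated mean is $\asymp r^d$, i.e.\ $\Psi\equiv 1$. This is exactly why the stated gauge there is $r^d\log\log\log(1/r)$. Your scheme gives no route to a gauge with an extra factor $\log\log(1/r)$.

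\textbf{The covering in Step 2.} The statement ``a.e.\ $t$ has infinitely many good $r$'' is too weak. Covering leftover parameter sets by cubes of side $2^{-\ell}$ costs about $\ell^{1+d/2}\log\log\ell$ per unit of Lebesgue measure. The paper therefore adds a second level $V_p$, with the smaller threshold $c_0 n_0 r^d\Psi(r)$ on $[R_{4p},R_{2p}]$. There, the roughly $2^{4p}$ independent scales give $\lambda_N(I_1\setminus V_p)\lesssim 2^{-(1+d)2^{4p}}$ eventually. This makes the final cube cover cost $\lesssim \ell_p^{-d/2}\log\log \ell_p\to 0$.
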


The conclusion of Theorem \ref{T:Haus:meas} is key to obtaining an optimal 
condition for points to be polar under \eqref{sigma:gamma}, which we state as part of the following result.

\begin{theorem}\label{T:polar}
Under Assumptions \ref{a:X}, \ref{a:var}, and \ref{a:SLND},
% Points are polar for $X$ if and only if \eqref{intsigma} holds.
the following statements hold:
\begin{enumerate}
    \item[(i)] If there is a fixed point $z\in \R^d$ that is polar for $X$, then
\begin{equation}\label{E:int:cond}
 \int_0^{\delta_0} \frac {r^{N-1}}{\sigma^d(r)}\, dr  = \infty.
\end{equation}
    \item[(ii)] Under \eqref{sigma:gamma}, condition \eqref{E:int:cond} implies that points are polar for $X$.
\end{enumerate}
\end{theorem}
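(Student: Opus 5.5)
Part (i) is proved by contraposition: if the integral in \eqref{E:int:cond} is finite, we show that every point $z$ is hit with positive probability. By \eqref{variogram} and Assumption \ref{a:SLND}, $\E[(X_1(s)-X_1(t))^2]\asymp\sigma^2(|s-t|)$ for $|s-t|\le\delta_0$. Passing to polar coordinates, finiteness of $\int_0^{\delta_0} r^{N-1}\sigma^{-d}(r)\,dr$ is then equivalent to \eqref{Eq:LT} on a small compact interval $I$ bounded away from $0$ (Assumption \ref{a:var} keeps $\mathrm{Var}(X_1(t))$ bounded below on $I$). Hence $X$ has a square-integrable local time $L(z,I)$ on $I$. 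To get $\P\{L(z,I)>0\}>0$ for a fixed $z$, we use the second moment method on the occupation measure $\mu_\eps(dt)=(2\pi\eps)^{-d/2}e^{-|X(t)-z|^2/2\eps}\,dt$.

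For the first moment, $\E\mu_\eps(I)$ is bounded below uniformly in $\eps$, because $X(t)$ has a nondegenerate Gaussian density on $I$. For the second moment, $\E[\mu_\eps(I)^2]\le C\int_I\int_I \det\mathrm{Cov}(X_1(s),X_1(t))^{-d/2}\,ds\,dt$. We bound the determinant below by $\mathrm{Var}(X_1(s))\,\mathrm{Var}(X_1(t)\mid X_1(s))\gtrsim\sigma^2(|s-t|)$, using Assumption \ref{a:SLND} with $r=|s-t|$. This makes the second moment finite. By Paley--Zygmund, along a weak limit point the measure $\mu$ has $\mu(I)>0$ with positive probability. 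Continuity of $X$ forces $\mu$ to be supported on $X^{-1}(z)\cap I$, so $z$ is hit with positive probability, i.e.\ $z$ is not polar. This is the standard Kahane-type argument and I expect no difficulty.

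For part (ii), under \eqref{sigma:gamma} we compute $r^{N-1}/\sigma^d(r)$. If $d>N/H$ the integral diverges, and \eqref{suff-cond} holds, so Theorem \ref{T:polar:suff} applies. If $d<N/H$ the integral converges, so there is nothing to prove. In the critical case $d=N/H$ the integrand is $r^{-1}(\log 1/r)^{-\gamma d}$, which diverges iff $\gamma\le 1/d$. So it suffices to show that points are polar when $d=N/H$ and $\gamma\le1/d$.

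Fix $z$ and a compact interval $I$ (it is enough to cover $\R^N\setminus\{0\}$ by countably many such $I$). The plan is to use the construction behind Theorem \ref{T:Haus:meas} rather than just its statement. That proof produces, for each small $\delta$, a random cover of $X(I)$ by sets $U_n$ of diameter $r_n\le\delta$ with $\sum_n\phi(r_n)\le K$, where $K$ has bounded expectation. This cover comes from a cover of $I$ by parameter cubes $C_n$ of side $\rho_n$ with $r_n\asymp\sigma(\rho_n)$ up to log-log factors. The events $\{z\in X(C_n)\}$ must then be controlled. One route is to use the stationary increments and the density bound for $X(t_n)$ to get $\P\{z\in X(C_n)\}\lesssim r_n^d$ after conditioning, and to compare $\sum_n r_n^d$ with $\sum_n\phi(r_n)$. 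But $\phi(r)/r^d\to\infty$, so this loses only the factor $(\log 1/r)^{1-\gamma d}\log\log\log(1/r)$, and that factor must be beaten. The cleaner route is to show that the mass is not merely bounded but $o(1)$ near $z$. Equivalently, we show $\mathcal H^\phi(X(I)\cap B(z,\eta))\to0$, or use that $X(I)$ has finite $\mathcal H^\phi$ measure with an a.s.\ finite \emph{random} bound while $z$ is deterministic. Then, by stationarity of increments and a Fubini argument over $z$ (Lebesgue-a.e.\ $z$ is missed since $\lambda_d(X(I))=0$), we upgrade to every $z$. This upgrade uses the Dalang--Mueller--Xiao device: write $X(t)=X(t_0)+(X(t)-X(t_0))$, with $X(t_0)$ approximately independent of the local increment field via Assumption \ref{a:SLND}. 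Then $\P\{z\in X(I')\}$ is bounded by $\E[\lambda_d$ of an $r$-neighborhood of the local range$]$, which tends to $0$ because its $\mathcal H^\phi$-type content near scale $r$ is bounded while $r^d\ll\phi(r)$ forces the neighborhood volume to vanish.

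The main obstacle is this last quantitative step in the critical case: turning the covering from Theorem \ref{T:Haus:meas}, with gauge only slightly larger than $r^d$, into $\P\{z\in X(I)\}=0$. I would first try a direct first-moment bound $\sum_n\P\{|X(t_n)-z|\le r_n\}$ over the sojourn-time-selected cubes. The aim is to show that the selection of "good" cubes, where the sojourn time is large, has expected count of order $\rho^{-N}(\log1/\rho)^{-(1-\gamma d)}/\log\log\log$ rather than $\rho^{-N}$, so that the $r_n^d$ sum vanishes as $\delta\to0$.
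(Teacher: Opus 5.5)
Your part (i) is the paper's argument: the smoothed occupation measures, a uniform lower bound on the first moment, the second moment controlled by $\mathrm{Var}(X_1(s))\,\mathrm{Var}(X_1(t)\mid X_1(s))\gtrsim\sigma^2(|s-t|)$, Paley--Zygmund, and a weak limit supported on $X^{-1}(z)\cap I$. Your reduction of (ii) to $d=N/H$, $\gamma\le 1/d$ via Theorem \ref{T:polar:suff} is also what the paper does.

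\textbf{The gap is the critical case itself.} You list candidate routes and stop at ``the main obstacle'', and none of the routes works as stated.
\begin{itemize}
\item Fubini applied to $\lambda_d(X(I))=0$ only shows that Lebesgue-a.e.\ $z$ is missed.
\item Your upgrade to a fixed $z$ splits $X(t)=X(t_0)+(X(t)-X(t_0))$ and treats $X(t_0)$ as ``approximately independent'' of the increments via strong local nondeterminism. But those increments are genuinely correlated with $X(t_0)$. Assumption \ref{a:SLND} only bounds conditional variances from below; it gives no decoupling that would let you integrate $z$ against the density of $X(t_0)$. Stationarity of increments does not help either.
\item Finite $\mathcal{H}^\phi$-measure gives no control of the Lebesgue measure of $r$-neighbourhoods of the range at a fixed scale. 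Hausdorff measure is not Minkowski content, and the covers behind Theorem \ref{T:Haus:meas} use balls of wildly different radii. So ``$r^d\ll\phi(r)$ forces the neighbourhood volume to vanish'' does not follow.
\item Your proposed first-moment bound over sojourn-selected cubes is also unsupported, since that family of cubes is random and depends on $X$.
\end{itemize}

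\textbf{The missing idea is the exact Gaussian regression.} Write $X(t)=X^{(1)}(t)+\alpha(t)X(t_0)$ with $\alpha(t)=\E[X_1(t)X_1(t_0)]/\E[X_1(t_0)^2]$. Here $X^{(1)}$ is \emph{exactly} independent of $X(t_0)$, and $1/2\le\alpha\le 3/2$ on a small interval $I$ around $t_0$. Then $X(t)=z$ if and only if $X^{(3)}(t):=(z-X^{(1)}(t))/\alpha(t)$ equals $X(t_0)$. Writing $f_0$ for the density of $X(t_0)$, this gives $\P\{\exists\,t\in I: X(t)=z\}=\int_{\R^d}\P\{y\in X^{(3)}(I)\}f_0(y)\,dy$. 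The right side vanishes once $\lambda_d(X^{(3)}(I))=0$ a.s., so no neighbourhoods or approximate independence are needed.

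That last fact is not a corollary of the statement of Theorem \ref{T:Haus:meas}. Indeed $X^{(3)}(t)=X(t_0)+(z-X(t))/\alpha(t)$ rescales $X(t)$ by a $t$-dependent factor, so null measure of $X(I)$ does not transfer. You have to rerun the sojourn-time covering for $X^{(3)}$, which is Lemma \ref{lem:X3}:
\begin{itemize}
\item $\alpha$ is H\"older of order $2H\wedge 1>H$, so you can choose $\beta<1/H$ with $\beta(2H\wedge1)>1$.
\item On $\{\sup_I|X|\le 2^{\eta p}\}$, the conditions $|s-t|\le r^\beta$ and $|X(t)-X(s)|\le 4r$ then give $|X^{(3)}(t)-X^{(3)}(s)|\le c_1 r$ for $r\le R_p$.
\item Hence the good sets $U_p,V_p$ for $X$ sit inside their analogues for $X^{(3)}$, and the modulus-of-continuity event transfers the same way.
\item The same Vitali-based construction then yields $\mathcal{H}^\phi(X^{(3)}(I))<\infty$, so $\lambda_d(X^{(3)}(I))=0$ because $r^d=o(\phi(r))$.
\end{itemize}
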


Note that, under \eqref{sigma:gamma},
\begin{align}
	\eqref{E:int:cond} \Leftrightarrow \begin{cases}
	\text{$d>N/H$, or}\\
	\text{$d=N/H$ and $\gamma \le 1/d$.}
	\end{cases}
    \label{int:cond:under:gamma}
\end{align}
Hence, \eqref{int:cond:under:gamma} is a necessary and sufficient condition for points to be polar under Assumptions \ref{a:X}--\ref{a:SLND} and \eqref{sigma:gamma}, thereby verifying our earlier conjecture.
Furthermore, note that the above condition \eqref{Eq:LT} for existence of local times is equivalent to the integral in \eqref{E:int:cond} being finite.
%This leads to a natural question: What is a general criterion for determining whether points are polar for $X$?
In general, the polarity of points for a stochastic process, say $Y$, is closely related to the non-existence of local times of $Y$. For L\'evy processes, it follows from the seminal works of Kesten \cite{Kesten69} and Hawkes \cite{H86} that the polarity of points is equivalent to the non-existence of local times. This equivalence remains valid for additive L\'evy processes (\cite{KXZa, KXZb}) and for fractional Brownian motion (\cite{DMX17, GH80, Pitt78}). 
In light of these facts and Theorem \ref{T:polar}, we believe that this equivalence also holds for any Gaussian random field $X$ that satisfies Assumptions \ref{a:X}--\ref{a:SLND}. 
%\eqref{vgram-iso}.
%Since, in this case, $X$ has a square-integrable local time on any interval $I \subset \R^N$ (e.g., $I = [0, 1]^N$) if and only if
%\begin{equation}\label{Eq:LT}
%\int_I \int_I \frac{dt\, ds}{\big[\E(X_1(s) - X_1(t))^2\big]^{d/2}} < \infty,
%\end{equation}
%(see \cite{Berman69, Pitt78, GH80}), which, under Assumption (A), is equivalent to 
%\begin{equation}\label{Eq:LT2}
% \int_0^{\delta_0} \frac {r^{N-1}}{\sigma^d(r)}\, dr < \infty,
%\end{equation}
%we have the following conjecture:
\begin{conjecture}\label{Con}
Under Assumptions \ref{a:X}, \ref{a:var} and \ref{a:SLND}, points 
are polar for $X$ if and only if \eqref{E:int:cond} holds.
\end{conjecture}

The rest of this paper is organized as follows. 
% In Section 2, we provide some preliminaries and assumptions, and state our main results, Theorems 2.1, 2.2 and 2.3. 
In Section \ref{S:pf:T1}, we prove Theorem \ref{T:polar:suff}.
In Section \ref{S:sojourn}, we establish sharp sojourn time estimates under \eqref{sigma:gamma} in the critical dimension $d=N/H$.
In Sections \ref{S:pf:T2} and \ref{S:pf:T3}, we prove Theorems \ref{T:Haus:meas} and \ref{T:polar}, respectively.
% {\textcolor{blue}{ We will add a little more description in this part.} }

Throughout the paper,
$B(t,r)$ denotes the closed ball centered at $t$ with radius $r$; $\lambda_N$ denotes Lebesgue measure on $\R^N$; $a \wedge b = \min\{a,b\}$; $\log$ denotes natural logarithm; $\log_2$ denotes base-2 logarithm; ``$f(x) \lesssim g(x)$'' means that there exists a constant $C$ such that $f(x) \le C g(x)$ for all $x$; ``$f(x) \asymp g(x)$'' means that $f(x) \lesssim g(x)$ and $g(x) \lesssim f(x)$; ``$f(x)\sim g(x)$ as $x\to0$'' means that $\lim_{x\to0}f(x)/g(x) = 1$.

\section{Proof of Theorem \ref{T:polar:suff}}
\label{S:pf:T1}

% \begin{lemma}\cite[Proposition 3.1]{X96}\label{L:P:Chung}
%     There exist constants $r_1 > 0$ and $K_1<\infty$ such that for all $r_0 \in (0,r_1]$,
%     \begin{align*}
%         \P\left\{ \exists\, r \in [r_0^2,r_0],\, \sup_{|t|\le r}|X(t)| \le K_1 \sigma\left(r\left( \log\log \tfrac{1}{r} \right)^{-1/N}\right) \right\} \ge 1 - \exp\left(-\left( \log\tfrac{1}{r_0} \right)^{1/2}\right).
%     \end{align*}
% \end{lemma}

\begin{proof}
    Fix $z \in \R^d$. We will prove that $\{z\}$ is polar for $X$ by first applying the covering argument of \cite{T95, X97} for estimating Hausdorff measures, and then extending the method of \cite{DMX17} to conclude that $\{z\}$ is polar.
    Let us recall and follow the set-up in \cite{T95, X97} for the covering argument.
    Let $t_0 \in \R^N \setminus\{0\}$.
    Define the Gaussian random fields $X^{(1)} = \{X^{(1)}(t), t \in \R^N\}$ and $X^{(2)} = \{X^{(2)}(t), t \in \R^N\}$ by
    \begin{align}\label{X2:X1}
        X^{(2)}(t) = \E[X(t)|X(t_0)], \quad X^{(1)}(t) = X(t) - X^{(2)}(t).
    \end{align}
    Then $X^{(1)}$ and $X^{(2)}$ are independent.
    Also, $X^{(1)}$ is independent of $X(t_0)$.
    In particular, for each $j \in \{1,\dots, d\}$,
    \begin{align}\label{X2}
        X_j^{(2)}(t) = \alpha(t) X_j(t_0), \quad \text{where} \quad
        \alpha(t) = \frac{\E[X_j(t)X_j(t_0)]}{\E[(X_j(t_0))^2]}
    \end{align}
    and $\alpha(t)$ does not depend on $j$.
    By Assumption \ref{a:var} and continuity, we may choose a small number $\rho_0 \in (0,\delta_0)$ such that
    \begin{align}\label{alpha:UB:LB}
        1/2 \le \alpha(t) \le 3/2 \quad \text{for all $t \in I$,}
    \end{align}
    where $I$ is a closed interval centered at $t_0$ with diameter $\rho_0$ and $I \subset \R^N \setminus\{0\}$.
    As was pointed out in \cite{X97}, we may assume, according to Theorem 1.8.2 of \cite{BGT89}, that $L: (0,\delta_0) \to (0,\infty)$ is smooth.
    Then, by Lemma 4.2 of \cite{X97}, there exists a constant $K_0<\infty$ such that
    \begin{align}\label{E:alpha-alpha}
        |\alpha(t)-\alpha(s)| \le K_0 |t-s|^{\gamma} \quad \text{for all $t,s \in I$,}
    \end{align}
    where $\gamma = 2H \wedge 1$, and hence
    \begin{align}\label{E:X2-X2}
        |X^{(2)}(t)-X^{(2)}(s)| \le K_0 |t-s|^{\gamma} |X(t_0)| \quad \text{for all $t,s \in I$.}
    \end{align}
    Choose and fix $\beta >0$ such that $H+\beta < \gamma$.
    For $p \ge 1$, consider the random sets 
    \begin{align*}
        &R_p = \left\{ t \in I : \exists \, r \in [2^{-2p}, 2^{-p}], \, \sup_{s \in B(t,r)} |X(s)-X(t)| \le K_1 \sigma\left(r\left( \log\log \tfrac{1}{r} \right)^{-1/N}\right)\right\},\\
        &R_p' = \left\{t \in I : \exists \, r \in [2^{-2p}, 2^{-p}], \, \sup_{s \in B(t,r)} |X^{(1)}(s)-X^{(1)}(t)| \le K_2 \sigma\left(r\left( \log\log \tfrac{1}{r} \right)^{-1/N}\right) \right\},
    \end{align*}
    where the constants $K_1$ and $K_2$ will be chosen below, and the events
    \begin{align*}
        &\Omega_{p,1} = \left\{ \lambda_N(R_p) \ge \lambda_N(I) \left(1-e^{-\sqrt{p}/4}\right) \right\},\\
        &\Omega_{p,2} = \left\{ \sup_{t \in I}|X(t)| \le 2^{\beta p} \right\},\\
        &\Omega_{p,3} = \left\{ \lambda_N(R_p') \ge \lambda_N(I) \left(1-e^{-\sqrt{p}/4}\right) \right\},\\
        &\Omega_{p,4} = \left\{ \sup_{t,s\in I: |t-s|\le \sqrt{N}2^{-p}}|X(t)-X(s)| \le K_3 \sigma(2^{-p}) \sqrt{p} \right\}.
    \end{align*}
    As was proved in \cite[p. 152]{X97}, there exists a constant $K_1>0$ such that
    the probabilities of the complement of $\Omega_{p,1}$ ($p \ge 1$) are summable, i.e.,
    \begin{align*}
        \sum_{p=1}^\infty \P\{\Omega_{p,1}^c\} < \infty.
    \end{align*}
    %where $E^c$ denotes the complement of an event $E$. 
    By Dudley's theorem \cite{Dudley}, $\E[\sup_{t\in I}|X(t)|]<\infty$. This and Markov's inequality imply that
    \begin{align*}
        \sum_{p=1}^\infty \P\{ \Omega_{p,2}^c\} \le \sum_{p=1}^\infty 2^{-\beta p} \,\textstyle{\E[\sup_{t \in I}|X(t)|]} < \infty.
    \end{align*}
    By \eqref{X2:X1} and \eqref{E:X2-X2}, if $t \in R_p$ and $\Omega_{p,1} \cap \Omega_{p,2}$ occurs, then there exists $r \in [2^{-2p},2^{-p}]$ such that for all $s \in B(t,r)$,
    \begin{align*}
        |X^{(1)}(t)-X^{(1)}(s)| 
        &\le |X(t)-X(s)| + |X^{(2)}(t)-X^{(2)}(s)|\\
        & \le K_1 \sigma\left(r\left( \log\log \tfrac{1}{r} \right)^{-1/N}\right) + K_0 r^\gamma 2^{\beta p}\\
        & \le K_2 \sigma\left(r\left( \log\log \tfrac{1}{r} \right)^{-1/N}\right)
    \end{align*}
    for some constant $K_2>0$ (since $\gamma-\beta>H$). 
    This shows that $K_2$ can be chosen such that $R_p\subset R_p'$ on $\Omega_{p,1} \cap \Omega_{p,2}$, hence $\Omega_{p,1} \cap \Omega_{p,2} \subset \Omega_{p,3}$. It follows that
    \begin{align*}
        \sum_{p=1}^\infty \P\{\Omega_{p,3}^c\} \le \sum_{p=1}^\infty 
        \left(\P\{\Omega_{p,1}^c\} + \P\{\Omega_{p,2}^c\}\right) < \infty.
    \end{align*}
    By Lemma 3.1 of \cite{X96} and the stationarity of increments, there exists a constant $K_3>0$ such that
    \begin{align*}
        \sum_{p=1}^\infty \P\{\Omega_{p,4}^c\} < \infty.
    \end{align*}
    Now let $\Omega_p = \Omega_{p,3} \cap \Omega_{p,4}$, so that 
    \begin{align}\label{Op:io}
        \sum_{p=1}^\infty \P\{\Omega_p^c\} < \infty.
    \end{align}
    We say that $A$ is a dyadic cube in $\R^N$ of order $q$ if it is of the form $A = \prod_{j=1}^N [k_j2^{-q}, (k_j+1)2^{-q}]$ for some $k=(k_1, \dots, k_N) \in \Z^N$.
    For each dyadic cube $A$, let $t_A$ denote the center of $A$.
    As in the proof of Theorem 4.2 in \cite{X97}, for each $p \ge 1$, we can obtain a family of dyadic cubes $\mathscr{H}_p = \mathscr{H}_{1,p} \cup \mathscr{H}_{2,p}$, where $\mathscr{H}_{1,p}$ consists of non-overlapping dyadic cubes of order $q$, where $p \le q \le 2p$, that intersect $I$ and whose union contains $R_p'$, and $\mathscr{H}_{2,p}$ consists of non-overlapping dyadic cubes of order $2p$ that intersect $I$ but not contained in any cubes in $\mathscr{H}_{1,p}$. 
    Note that the cubes in $\mathscr{H}_p$ form a cover for the interval $I$. 

    Next, we employ this covering and extend the method of \cite{DMX17} to show that $\{z\}$ is polar. 
    Define
    \begin{align}\label{X3}
        X^{(3)}(t) = \frac{1}{\alpha(t)}(z-X^{(1)}(t)),
    \end{align}
    where $\alpha(t)$ is given in \eqref{X2}.
    We claim that the range $X^{(3)}(I)$ has Lebesgue measure 0.
    In fact, for any $A \in \mathscr{H}_p$ and $t \in A$, we have
    \begin{align}\begin{split}\label{X3-X3}
        X^{(3)}(t) - X^{(3)}(t_A) &= \frac{1}{\alpha(t)}(z-X^{(1)}(t)) - \frac{1}{\alpha(t_A)} (z-X^{(1)}(t_A))\\
        &= \frac{(\alpha(t_A)-\alpha(t))(z-X^{(1)}(t))+\alpha(t)(X^{(1)}(t_A) - X^{(1)}(t))}{\alpha(t)\alpha(t_A)}.
    \end{split}\end{align}
    If $A \in \mathscr{H}_{1,p}$ and $A$ is of order $q$, then by \eqref{X2:X1}--\eqref{E:alpha-alpha}, when $\Omega_p$ occurs,
    \begin{align*}
        |X^{(3)}(t) - X^{(3)}(t_A)|
        &\le 4 \left[ K_0 |t_A-t|^\gamma (|z| + (1+\tfrac{3}{2}) 2^{\beta p} ) + \tfrac{3}{2} K_2 \sigma(2^{-q}(\log\log 2^q)^{-1/N}) \right]\\
        & \le C_1 \sigma(2^{-q}(\log\log 2^q)^{-1/N})
    \end{align*}
    for some constant $C_1$, where we have used $|t_A-t|\le \sqrt{N}2^{-q}$ and $\gamma-\beta>H$ to obtain the last inequality.
    Similarly, if $A \in \mathscr{H}_{2,p}$, then when $\Omega_p$ occurs,
    \begin{align*}
        |X^{(3)}(t) - X^{(3)}(t_A)|
        &\le 4 \left[ K_0|t_A-t|^\gamma (|z|+(1+\tfrac{3}{2})2^{\beta p}) + \tfrac{3}{2} (1+\tfrac{3}{2}) K_3 \sigma(2^{-2p}) \sqrt{2p} \right]\\
        &\le C_2 \sigma(2^{-2p})\sqrt{p}
    \end{align*}
    for some constant $C_2$.
    This shows that for each $p \ge 1$, the family $\{ B(X^{(3)}(t_A), r_A) : A \in \mathscr{H}_p \}$ of balls form a random cover for $X^{(3)}(I)$ when the event $\Omega_p$ occurs, where, for each dyadic cube $A$,
    \begin{align}
        r_A = \begin{cases}
        C_1 \sigma(2^{-q}(\log\log 2^q)^{-1/N}) & \text{if $A \in \mathscr{H}_{1,p}$ is of order $q$,}\\
        C_2\sigma(2^{-2p})\sqrt{p} & \text{if $A \in \mathscr{H}_{2,p}$.}
        \end{cases}
    \end{align}
    But by \eqref{Op:io}, with probability 1, $\Omega_p$ occurs for all sufficiently large $p$, and when $\Omega_{p,3}$ occurs, $\# \mathscr{H}_{2,p} \le C_3 2^{2pN} e^{-\sqrt{p}/4}$ (since $\mathscr{H}_{2,p}$ covers $I\setminus R_p'$), hence we have
    \begin{align*}
        &\lambda_d(X^{(3)}(I)) \lesssim \sum_{A \in \mathscr{H}_p} r_A^d 
        = \sum_{A \in \mathscr{H}_{1,p}}  \left[C_1\sigma(2^{-q}(\log\log 2^q)^{-1/N})\right]^d + \sum_{A \in \mathscr{H}_{2,p}} \left[C_2 \sigma(2^{-2p})\sqrt{p} \right]^d\\
        &\le \max_{p \le q \le 2p}\frac{[C_1\sigma(2^{-q}(\log\log 2^q)^{-1/N})]^d}{2^{-qN}} \sum_{\substack{A \in \mathscr{H}_{1,p}\\\text{order }q}} 2^{-qN} + C_3 2^{2pN} \exp(-\sqrt{p}/4)\left[C_2 \sigma(2^{-2p})\sqrt{p} \right]^d.
    \end{align*}
    To estimate the last summand, we notice that, since $\sigma$ is regularly varying with index $H$, Theorem 1.5.6 of \cite{BGT89} shows that given $\delta>0$, there exists $p_0 \ge 1$ such that
    \begin{align*}
        \frac{\sigma(2^{-2p})}{\sigma(2^{-2p}(\log\log 2^{2p}))^{-1/N})} \le (\log\log2^{2p})^{\frac{H+\delta}{N}} \quad \text{for all $p \ge p_0$.}
    \end{align*}
    Also, recall that $\mathscr{H}_{1,p}$ consists of non-overlapping cubes that cover $R_p \subset I$.
    These together with condition \eqref{suff-cond} imply that with probability 1, for $p$ large,
    \begin{align*}
        \lambda_d(X^{(3)}(I)) 
        & \lesssim \max_{p \le q \le 2p}\frac{\sigma^d(2^{-q}(\log\log 2^q)^{-1/N})}{2^{-qN}} \sum_{A \in \mathscr{H}_{1,p}}\lambda_N(A)\\
        & \qquad + \exp(-\sqrt{p}/4)\,\frac{\sigma^d(2^{-2p}(\log\log 2^{2p})^{-1/N})}{2^{-2pN}}\,  (\log\log2^{2p})^{d\frac{H+\delta}{N}}\,p^{d/2} \\
        & \lesssim \max_{p \le q \le 2p}\frac{\sigma^d(2^{-q}(\log\log 2^q)^{-1/N})}{2^{-qN}}\bigl( \lambda_N(I) + o(1)\bigl)\\
        & \to 0 \quad \text{as $p \to \infty$.}
    \end{align*}
    We have thus verified the claim that $X^{(3)}(I)$ has Lebesgue measure 0 a.s.
    
    Finally, thanks to Fubini's theorem and the above claim, we have
    \begin{align*}
        \int_{\R^d} \P\{\exists\, t \in I, X^{(3)}(t) = x \} dx = \E[\lambda_d(X^{(3)}(I))] = 0,
    \end{align*}
    which implies that
    \begin{align}\label{X3:polar}
        \P\{\exists\, t \in I, X^{(3)}(t) = x\} = 0 \quad \text{for almost every $x \in \R^d$.}
    \end{align}
    Recall that $X^{(1)}$ is independent of $X(t_0)$.
    So, according to \eqref{X3}, $X^{(3)}$ is independent of $X(t_0)$.
    Also, by \eqref{X2},
    \begin{align}\label{X:X3}
        X(t) = z \quad \text{if and only if} \quad X^{(3)}(t) = X(t_0).
    \end{align}
    Let $f_0(x)$ be the probability density function of $X(t_0)$. 
    Thanks to \eqref{X:X3}, independence, and \eqref{X3:polar}, we deduce that
    \begin{align*}
        \P\{ \exists\, t \in I, X(t) = z \}
        &= \P\{ \exists\, t \in I, X^{(3)}(t) = X(t_0) \}\\
        &= \int_{\R^d} \P\{ \exists\, t \in I, X^{(3)}(t) = x \} f_0(x) dx = 0.
    \end{align*}
    Recall that $I \subset \R^N \setminus\{0\}$ is a closed interval centered at $t_0$ with diameter $\rho_0>0$.
    Since we can cover $\R^N \setminus \{0\}$ by countably many such intervals, it follows that
    \[
        \P\{ \exists \, t \in \R^N \setminus\{0\}, X(t)=z \} = 0.
    \]
    Hence, $\{z\}$ is polar for $X$.
    This completes the proof of Theorem \ref{T:polar:suff}.
\end{proof}

\section{Sojourn time estimates in the critical dimension}\label{S:sojourn}

This section aims to prove sharp estimates for the moments and tail probabilities of the ``truncated'' sojourn time
\begin{align*}
% \label{Def:sojourn}
T_{\eps} := \lambda_N\{ t \in \R^N : |t| \le \varepsilon^{\beta}, |X(t)| \le \eps\} = \int_{B_{\varepsilon^{\beta}}} {\bf 1}_{\{|X(t)|\le\eps\}} dt,
\end{align*}
for a fixed $\beta\in (1,1/H)$, where
\begin{align*}
% \label{E:B_0}
B_{\varepsilon^{\beta}}:=\{t\in \R^N : |t|\le \varepsilon^{\beta}\}.
\end{align*}
Throughout this section, we let Assumptions \ref{a:X} and \ref{a:SLND} hold with $\sigma$ given by \eqref{sigma:gamma}, i.e.,  
\begin{align}\label{E:sigma-def}
\sigma(r) = r^{H} L(r), \ \ \hbox{ where }\ L(r) = \left(\log(1/r)\right)^{\gamma},
\end{align}
%I changed $\asymp$ back to $=$ since we already assume the canonical metric $d(s,t) \asymp \sigma(|t-s|)$; see (3) and (4) in Assumptions 1.1 and 1.3
with $N=Hd$ and $\gamma \le 1/d$. 
An asymptotic inverse of $\sigma$ is given by
\begin{align}\label{D:sigma*}
    \sigma^*(r) := H^{\gamma/H} r^{1/H} (\log(1/r))^{-\gamma/H}
\end{align}
so that
\[
\sigma(\sigma^*(r)) \sim r \quad \text{and} \quad \sigma^*(\sigma(r)) \sim r 
\quad \text{as $r \to 0^+$.}
\]
% The associated function $f$ has the following form
Define
\begin{align}\label{D:f_cases}
f(r) := \begin{cases}
\left(\log(1/r)\right)^{1-\gamma d} & \text{if } \gamma < 1/d, \\
\log\log(1/r) & \text{if } \gamma = 1/d.
\end{cases}
\end{align}
We establish the following upper bounds for the moments of $T_\eps$.

\begin{lemma}\label{L:sojourn:UB}
There exist constants $C_1 < \infty$ and $\delta_1 \in (0, 1)$ such that 
for all integers $n \ge 1$ and all $\eps \in (0, \delta_1)$,
\begin{align}\label{E:sojourn:UB-Psi}
\E[T_\eps^n] \le \left[C_1 n \eps^d \Psi(\varepsilon)\right]^n,
\end{align}
where
    \begin{align}\label{E:psi}
    \Psi(\varepsilon) := (\log(1/\eps))^{1-\gamma d} =
    \begin{cases} 
    f(\varepsilon) & \text{if } \gamma < 1/d, \\
    1 & \text{if } \gamma = 1/d.
    \end{cases} 
    \end{align}
\end{lemma}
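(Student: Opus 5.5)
We would follow the standard moment computation for sojourn times, in the style of Talagrand \cite{T98} and Xiao \cite{X97}, but track the logarithmic factors exactly. By Fubini,
\[
\E[T_\eps^n] = \int_{B_{\eps^\beta}^n} \P\{|X(t^1)|\le\eps,\dots,|X(t^n)|\le\eps\}\, dt^1\cdots dt^n .
\]
Since the coordinates are i.i.d., the probability factors as the $d$-th power of the corresponding one-dimensional probability for $X_1$. We bound that one-dimensional probability by the usual chaining of conditional densities. Order the points (by symmetry, at the cost of a factor $n!$), and condition successively on $X_1(t^1),\dots,X_1(t^{j-1})$. The conditional law of $X_1(t^j)$ is Gaussian with variance $\mathrm{Var}(X_1(t^j)\mid X_1(t^i),i<j)$. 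By Assumption~\ref{a:SLND}, this variance is at least $c_2\sigma^2(r_j)$, where
\[
r_j:=\min\{|t^j-t^i|: 0\le i<j\}\wedge\delta_0, \qquad t^0=0 .
\]
Here we use $X(0)=0$ so that the origin acts as the reference point, and $|t^j|\le\eps^\beta<\delta_0$. Hence each conditional probability is at most $\min\{1, C\eps/\sigma(r_j)\}$. This yields
\[
\E[T_\eps^n]\le \int_{B_{\eps^\beta}^n}\prod_{j=1}^n \min\Bigl\{1,\frac{C\eps^d}{\sigma^d(r_j)}\Bigr\}\,dt^1\cdots dt^n .
\]

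Next we integrate in the order $t^n, t^{n-1},\dots,t^1$. For fixed $t^1,\dots,t^{j-1}$, the set where $r_j\le r$ is contained in a union of $j$ balls of radius $r$. The integral over $t^j\in B_{\eps^\beta}$ is therefore at most
\[
C j\int_0^{2\eps^\beta}\min\Bigl\{1,\frac{\eps^d}{\sigma^d(r)}\Bigr\} r^{N-1}\,dr .
\]
Multiplying over $j$ gives a factor $n!\le n^n$, which accounts for the $n$ inside $[C_1 n\eps^d\Psi(\eps)]^n$. The $n!$ from ordering is avoided because the ordered chaining is already valid for every configuration, with the minimum taken over previous points only. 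It therefore remains to show
\[
J(\eps):=\int_0^{2\eps^\beta}\min\Bigl\{1,\frac{\eps^d}{\sigma^d(r)}\Bigr\} r^{N-1}\,dr\lesssim \eps^d(\log(1/\eps))^{1-\gamma d}.
\]

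The key estimate, and the only place where criticality $N=Hd$ and the truncation $|t|\le\eps^\beta$ enter, is this one-dimensional integral. Split it at $r_\eps:=\sigma^*(\eps)$, where $\sigma(r)\approx\eps$ and $r_\eps^N\asymp \eps^d(\log(1/\eps))^{-\gamma d}$ by \eqref{D:sigma*}. The part $r<r_\eps$ contributes at most $r_\eps^N/N$, which is $\lesssim\eps^d\Psi(\eps)$ because $(\log(1/\eps))^{-\gamma d}\le(\log(1/\eps))^{1-\gamma d}$. On $r_\eps<r<2\eps^\beta$, since $\sigma^d(r)=r^N(\log(1/r))^{\gamma d}$, the integrand equals
\[
\eps^d\, r^{-1}(\log(1/r))^{-\gamma d},
\]
so this part is
\[
\eps^d\int_{\log(1/(2\eps^\beta))}^{\log(1/r_\eps)} u^{-\gamma d}\,du .
\]
Both endpoints are comparable to $\log(1/\eps)$: the lower one is $\approx\beta\log(1/\eps)$, and the upper one is $\approx H^{-1}\log(1/\eps)$. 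Since $\beta<1/H$, the interval has length of order $\log(1/\eps)$, and on it $u^{-\gamma d}\asymp(\log(1/\eps))^{-\gamma d}$ for every sign of $\gamma$. This part is thus $\asymp\eps^d(\log(1/\eps))^{1-\gamma d}=\eps^d\Psi(\eps)$, including the case $\gamma=1/d$, where it is $\asymp\eps^d$.

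Uniformity requires $\eps<\delta_1$ small enough that $2\eps^\beta<\delta_0$, that $r_\eps<\eps^\beta$, and that the asymptotics of $\sigma^*$ hold with constants. The main obstacle is making the conditional-variance chaining rigorous when some $r_j$ are tiny. This is handled by the $\min\{1,\cdot\}$ device, which bounds the probability trivially when $\sigma(r_j)<\eps$, and by the fact that Assumption~\ref{a:SLND} requires $r\le|t|\wedge\delta_0$. Including $t^0=0$ in the minimum resolves this, since then $r_j\le|t^j|$ automatically.
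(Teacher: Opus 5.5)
Your proposal is correct and is essentially the paper's proof. SLND with the reference point $t^0=0$ bounds each successive conditional probability by $\min\{1,C\eps^d/\sigma^d(r_j)\}$, and the $t^j$-integral is then at most $Cj$ times the radial integral. You split that integral at $\sigma^*(\eps)$ and evaluate it using $N=Hd$ and $\beta<1/H$ as the paper does, and $n!\le n^n$ finishes the argument.

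Two small points of wording. The event $\{|X(t^j)|\le\eps\ \forall j\}$ is only contained in the intersection of the coordinate events, so its probability is bounded by, not equal to, the $d$-th power of the one-dimensional probability; that inequality is all you need. Also, the opening remark about ordering the points at a cost of $n!$ is unnecessary, as you note yourself.
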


\begin{proof}
For any $n \ge 1$,
\begin{align}\label{E:ETn2a}
\E[T_\eps^n] = \int_{B_{\varepsilon^{\beta}}^n} \P\left\{|X(t_1)| \le \eps, \dots, |X(t_n)| \le \eps\right\} dt_1 \cdots dt_n.
\end{align}
Since the set of points $(t_1, \dots, t_n)\in B_{\varepsilon^{\beta}}^n$ such that $t_i = t_j$ 
for some $i \ne j$ has $(nN)$-dimensional Lebesgue measure $0$, the integration in \eqref{E:ETn2a} is effectively taken over the subset of $B_{\varepsilon^{\beta}}^n$ where all the points $t_1, \dots, t_n$ are distinct. 
By conditioning, we can write the above as
\begin{align}\begin{split}\label{E:ETn2}
\E[T_\eps^n] &=\int_{B_{\varepsilon^{\beta}}^{n-1}}dt_1\cdots dt_{n-1}\\
& \quad \times \int_{B_{\varepsilon^{\beta}}} dt_n \, \E\left[ {\bf 1}_{\{|X(t_1)|\le \eps, \dots, |X(t_{n-1})|\le \eps\}} \P\left\{ |X(t_n)| \le \eps | X(t_1),\dots, X(t_{n-1})\right\} \right].
\end{split}\end{align}
If we fix $t_1, \dots, t_{n-1}\in B_{\varepsilon^{\beta}}$, then for any $t_n \in B_{\varepsilon^{\beta}}$, the 
conditional distribution of $X_1(t_n)$ given $X_1(t_1),\dots, X_1(t_{n-1})$ 
is Gaussian. By the assumption of strong local nondeterminism (Assumption \ref{a:SLND}), 
the conditional variance of this distribution is bounded below as follows:
\begin{align*}
\mathrm{Var}(X_1(t_n)|X_1(t_1), \dots, X_1(t_{n-1})) \ge c_2 \,\sigma^2(r_n),
\end{align*}
where $r_n = \min\{ |t_n|, \min_{1\le i \le n-1}|t_n-t_i| \}$.
This together with Anderson's inequality \cite{A55} and the hypothesis that $X_1,\dots, X_d$ are i.i.d.~implies that
\begin{align*}
\P\{ |X(t_n)| \le \eps | X(t_1), \dots, X(t_{n-1}) \}
\le \P\left\{ |Z| \le \frac{\eps}{\sqrt{c_2}\,\sigma(r_n)} \right\}^d
\le \left(\min\left\{ 1,  \frac{2 \eps}{\sqrt{c_2}\,\sigma(r_n)} \right\}\right)^d,
\end{align*}
where $Z$ denotes a standard normal random variable.
Set $t_0=0$.
By simple estimates, the use of polar coordinates, and the relation $N = 
Hd$, we deduce that
\begin{align*}
\int_{B_{\varepsilon^{\beta}}} \left(\min\left\{ 1,  \frac{2 \eps}{\sqrt{c_2}\,\sigma(r_n)} \right\}\right)^d dt_n
&\le \sum_{i=0}^{n-1} \int_{B_{\varepsilon^{\beta}}} \min\left\{ 1, \frac{(2c_2^{-1/2})^d\,\eps^d}{\sigma^d(|t_n-t_i|)} \right\} dt_n\\
& \le C \sum_{i=0}^{n-1} \int_0^{\varepsilon^{\beta}} \min\left\{ 1, \frac{\eps^d}{\sigma^d(\rho)} \right\} \rho^{N-1} d\rho\\
& \le Cn \left[ \int_0^{\sigma^*(\eps)} r^{N-1} dr + \eps^d \int_{\sigma^*(\eps)}^{\varepsilon^{\beta}} \frac{d\rho}{\rho L^d(\rho)}  \right]\\
& \le Cn \left[ (\sigma^*(\eps))^N + \eps^d\bigl( f(\sigma^*(\eps))-f(\varepsilon^{\beta})\bigl) \right],
\end{align*}
valid uniformly for all distinct $t_1,\dots, t_{n-1}\in B_{\varepsilon^{\beta}}$.
Note that the above estimate is still valid when $n=1$ for which the 
conditional probability is replaced by the unconditional one.

We now analyze the term $A(\eps) := \eps^d( f(\sigma^*(\eps))-f(\varepsilon^{\beta}))$ for the two cases. Recall that $\sigma^*(\varepsilon) = C\varepsilon^{1/H}\left(\log(1/\varepsilon)\right)^{-\gamma/H}$ for all $\gamma\le 1/d$.
%and $\varepsilon^{\beta} \ll \varepsilon^{1/H}$ for small $\varepsilon$ since $\beta < 1/H$.

\medskip
\noindent\textbf{Case (1)} $\gamma < 1/d$:
In this case, $f(r) = (\log(1/r))^{1-\gamma d}$ and
\begin{align*}
A(\eps) \sim C\,\varepsilon^d\, \left[ \left(\log(1/\sigma^*(\eps))\right)^{1-\gamma d} - \left(\log(1/\eps^\beta)\right)^{1-\gamma d} \right] \quad \text{as $\eps \to 0$.}
\end{align*}
Using the expression of $\sigma^{*}$ it is not hard to show that $A(\eps) \sim C \eps^d f(\eps)$. In addition, this last term dominates $(\sigma^*(\eps))^N  = \eps^{N/H} \left(\log(1/\varepsilon)\right)^{-\gamma N/H}=\eps^{d}\left(\log(1/\varepsilon)\right)^{-\gamma d}$. Therefore,
\begin{align*}
\int_{B_{\varepsilon^{\beta}}} \left(\min\left\{ 1,  \frac{2 \eps}{\sqrt{c_2}\,\sigma(r_n)} \right\}\right)^d dt_n \le C n \eps^d f(\eps).
\end{align*}

\medskip
\noindent\textbf{Case (2)} $\gamma = 1/d$:
In this case, $f(r) = \log\log(1/r)$. We can use the properties of logarithm
to find that
\begin{align*}
A(\eps) = C\,\varepsilon^d\, \log\left(\frac{\log(1/\sigma^*(\eps))}{\log(1/\varepsilon^{\beta})}\right)\sim C\,\varepsilon^d\,\log\left(\frac{1}{H\beta}\right) \quad \text{as $\eps \to 0$.}
\end{align*}
Thus $A(\eps) \sim C \eps^d$. On the other hand $(\sigma^*(\eps))^N=C \eps^d\left(\log(1/\varepsilon)\right)^{-1}\ll \eps^d$. Then the entire bound simplifies to:
\begin{align*}
\int_{B_{\varepsilon^{\beta}}} \left(\min\left\{ 1,  \frac{2 \eps}{\sqrt{c_2}\,\sigma(r_n)} \right\}\right)^d dt_n \le C n \eps^d.
\end{align*}
Returning to \eqref{E:ETn2}, we apply the estimates derived above. For Case (1), we have $\E[T_\eps^n] \le (C n \eps^d f(\eps)) \E[T_\eps^{n-1}]$.
For Case (2), we have $\E[T_\eps^n] \le (C n \eps^d) \E[T_\eps^{n-1}]$.
The result \eqref{E:sojourn:UB-Psi} follows immediately by induction.
\end{proof}

Next, we turn to establishing a lower bound for $\E[T_\eps^n]$. 
%%we will make use of Lemmas \ref{L:PX:LB} and \ref{L:int_D:LB}. 
In Lemmas \ref{L:PX:LB} and \ref{L:int_D:LB} below, for any $t \in \R^N$ and any set $F \subset \R^N$, $d(t,F)$ denotes the distance defined by 
\begin{align*}
% \label{E:dist}
    d(t,F) = \inf\{|t-s| : s \in F\}.
\end{align*}

\begin{lemma}\label{L:PX:LB}
There exists a constant $C_0>0$ such that for all $\eps\in(0,1)$, all $n$ of the form $n = 2^p$ for some $p \in \N^+$, and all $t_1, \dots, t_n\in B_{\varepsilon^{\beta}}$,
\begin{align*}
% \label{E:PX:LB}
\P\left\{ |X(t_1)| \le \eps, \dots, |X(t_n)|\le \eps\right\} \ge C_0^n \eps^{nd} \frac{1}{\sigma^d(|t_1|)} \prod_{0 \le k < p} \prod_{2^k < i \le 2^{k+1}} \frac{1}{\sigma^d(d(t_i, F_k))}
\end{align*}
provided that
\begin{align}\label{E:cond:t_i}
\sigma(|t_1|) \ge 2^{-p}\eps \, \text{ and } \sigma(d(t_i, F_k)) \ge 2^{k-p}\eps \
    \text{ for  $0\le k < p$ and $2^k < i \le 2^{k+1}$,}
\end{align}
where $F_k = \{t_1, \dots, t_{2^k}\}$.
%and $d(t, F)$ is defined by \eqref{E:dist}.
\end{lemma}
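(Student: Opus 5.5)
The plan is to reduce the joint small-ball event to an intersection of symmetric slab events for Gaussian increments along a hierarchical chain. Then lower-bound the probability of that intersection by a product using Šidák's inequality, or more generally the Gaussian correlation inequality. Because the coordinates $X_1,\dots,X_d$ are i.i.d., it suffices to work with $X_1$ and raise the final bound to the power $d$.

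\textbf{Step 1: chain construction.} For each $2^k<i\le 2^{k+1}$ with $0\le k<p$, pick $s_i\in F_k$ with $|t_i-s_i|=d(t_i,F_k)$. Since $s_i=t_j$ for some $j\le 2^k$, iterating the map $t_i\mapsto s_i$ produces a chain $t_i\to s_i\to\cdots\to t_1$. This chain visits at most one point of each dyadic level $(2^{j},2^{j+1}]$ with $j<k$ and ends at $t_1$. Consider the event
\[
E=\Bigl\{|X_1(t_1)|\le 2^{-p-1}\eps/\sqrt d\Bigr\}\cap\bigcap_{0\le k<p}\ \bigcap_{2^k<i\le 2^{k+1}}\Bigl\{|X_1(t_i)-X_1(s_i)|\le 2^{k-p-1}\eps/\sqrt d\Bigr\}.
\]
On $E$, telescoping along the chain of $t_i$ (with $t_i$ at level $k$) gives
\[
|X_1(t_i)|\le \frac{\eps}{\sqrt d}\,2^{-p-1}\Bigl(1+\sum_{j=0}^{k}2^{j}\Bigr)\le \frac{\eps}{\sqrt d}.
\]
Hence the intersection of the $d$ independent copies of $E$ (one per coordinate) is contained in $\{|X(t_i)|\le\eps\ \forall i\}$. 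The normalization by $\sqrt d$ only changes constants.

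\textbf{Step 2: decoupling.} Each event in $E$ is a symmetric slab $\{|Y|\le a\}$ for a centered Gaussian variable $Y$ that is jointly Gaussian with the others. By Šidák's inequality,
\[
\P(E)\ge \P\{|X_1(t_1)|\le a_0\}\prod_{k,i}\P\{|X_1(t_i)-X_1(s_i)|\le a_{k}\}.
\]
For each factor we use the elementary bound $\P\{|Y|\le a\}\ge c\,a/\sqrt{\mathrm{Var}\,Y}$, which is valid when $a\le\sqrt{\mathrm{Var}\,Y}$. If instead $a$ exceeds the standard deviation, the factor is at least a constant $c>0$, which is still acceptable because the target factor $a/\sigma$ is then bounded by a constant. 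The variances are controlled via \eqref{variogram}: $\mathrm{Var}(X_1(t_i)-X_1(s_i))\le c_1^2\sigma^2(|t_i-s_i|)$, since $|t_i-s_i|\le 2\eps^\beta\le\delta_0$ for small $\eps$. Similarly $\mathrm{Var}(X_1(t_1))\le c_1^2\sigma^2(|t_1|)$, using $X(0)=0$. Condition \eqref{E:cond:t_i} guarantees that $a_k=2^{k-p-1}\eps/\sqrt d\lesssim\sigma(d(t_i,F_k))$, so each factor is at least
\[
c\,\frac{2^{k-p}\eps}{\sigma(d(t_i,F_k))},
\]
and the $t_1$ factor is at least $c\,2^{-p}\eps/\sigma(|t_1|)$.

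\textbf{Step 3: bookkeeping of the powers of 2.} Raising to the power $d$ and multiplying, we obtain $\eps^{nd}$ times the desired product of $\sigma^{-d}$ terms, times $c^{nd}$, times the correction factor
\[
2^{-pd}\prod_{k<p}2^{-(p-k)d\,2^k}.
\]
Since $\sum_{k<p}(p-k)2^k=2^p\sum_{m\ge1}m2^{-m}\le 2\cdot 2^p=2n$, this correction factor is at least $2^{-3nd}$, which is absorbed into $C_0^n$. The main point needing care is Step 2: the slabs must be symmetric and centered so that Šidák (or Royen's Gaussian correlation inequality) applies. We must also check that the case where $a$ exceeds the standard deviation is handled, which it is since that factor is then bounded below by a constant while the claimed bound only asks for $\lesssim 1$ there. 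The remaining estimates are routine.
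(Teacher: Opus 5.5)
Your proposal is correct and follows essentially the same route as the paper. Both arguments chain each $t_i$ to its nearest point in $F_k$ and back to $t_1$, reduce to one-dimensional symmetric slabs via the i.i.d. coordinates, get a product lower bound from a Gaussian correlation-type inequality, apply the linear small-ball bound for a standard normal, and do the same bookkeeping of the powers of $2$.

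The differences are minor refinements of points the paper leaves implicit:
\begin{itemize}
\item You use \v{S}id\'ak's inequality instead of Royen's Gaussian correlation inequality. This suffices here because all the events are symmetric slabs.
\item Your halved thresholds give the bound at level $\eps$ directly, whereas the paper proves it at level $2\eps$.
\item You state the comparison $\mathrm{Var}(Y)\le c_1^2\sigma^2$ explicitly and treat the case where the threshold exceeds the standard deviation.
\end{itemize}
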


\begin{proof}
For $0 \le k < p$ and $2^k < i \le 2^{k+1}$, let $a(i) \in \{1,\dots, 2^k\}$ be such that
\[
|t_i - t_{a(i)}| = d(t_i, F_k).
\]
Observe by the triangle inequality that if $|X(t_1)| \le 2^{-p} \eps$ and
\begin{align*}
|X(t_i) - X(t_{a(i)})| \le 2^{k-p}\eps \text{ for all $k$ and $i$ with $0\le k <p$ and $2^k< i \le 2^{k+1}$,}
\end{align*}
then $|X(t_i)| \le 2\eps$ for all $1 \le i \le 2^p$.
%Recall \v{S}id\'ak's theorem \cite{S68}: For any sequence $\{Z_j\}$ of jointly Gaussian centered random variables, we have
%\begin{align*}
%\P\{ |Z_j| \le \eps_j \text{ for all } j \} \ge \prod_j \P\{ |Z_j| \le \eps_j \}.
%\end{align*}
This, together with the Gaussian correlation inequality \cite{LM, Royen} and the fact that the coordinate processes $X_1,\dots, X_d$ are i.i.d.,~implies that
\begin{align*}
&\P\{ |X(t_1)| \le 2\eps, \dots, |X(t_n)| \le 2\eps  \}\\
& \ge \left( \P\left\{ |X_1(t_1)| \le \frac{2^{-p}\eps}{\sqrt{d}} \right\} \prod_{0 \le k < p}\prod_{2^k < i \le 2^{k+1}} \P\left\{ |X_1(t_i) - X_1(t_{a(i)})| \le \frac{2^{k-p}\eps}{\sqrt{d}}\right\}\right)^d.
\end{align*}
For $a \in (0, 1]$ and a standard normal random variable $Z$,
\begin{align*}
\P\{ |Z| \le a \} = \frac{2}{\sqrt{2\pi}} \int_0^a e^{-\frac{x^2}{2}} dx \ge c_0 a,
\end{align*}
where $c_0 = 2e^{-1/2}/\sqrt{2\pi}$. 
Condition \eqref{E:cond:t_i} ensures that
\[
\frac{2^{-p}\eps}{\sqrt{d}\,\sigma(|t_1|)} \le 1
\quad \text{and} \quad
\frac{2^{k-p}\eps}{\sqrt{d}\,\sigma(|t_i-t_{a(i)}|)} = \frac{2^{k-p}\eps}{\sqrt{d}\,\sigma(d(t_i,F_k))} \le 1.
\]
Hence, we have
\begin{align*}
\P\{ |X(t_1)| \le 2\eps, \dots, |X(t_n)| \le 2\eps  \}
\ge \left( \frac{c_0}{\sqrt{d}} \right)^{nd} \left( \frac{2^{-p}\eps}{\sigma(|t_1|)} \prod_{0 \le k<p} \prod_{2^k < i \le 2^{k+1}} \frac{2^{k-p}\eps}{\sigma(d(t_i, F_k))} \right)^d.
\end{align*}
To finish the proof, recall that $n=2^p$ and note that
\begin{align}\label{E:prod2k-p}
2^{-p} \prod_{0\le k < p} (2^{k-p})^{2^k}
\ge 2^{-n} 2^{\sum_{k=0}^{p-1} (k-p)2^k} = 2^{-n}2^{-\sum_{k=1}^p k 2^{p-k}} \ge c_1^n,
\end{align}
where $c_1 = 2^{-1-\sum_{k=1}^\infty k2^{-k}}$ is a positive constant. 
%This proves Lemma \ref{L:PX:LB}.
\end{proof}

Lemma \ref{L:int_D:LB} below shows that the set of points  $t_1, 
\dots, t_n\in B_{\varepsilon^\beta}$ that satisfy (\ref{E:cond:t_i}) is quite large.
This is essential for deriving a sharp lower bound for $\E[T_\eps^n]$ in 
Lemma \ref{L:sojourn:LB}. 

\begin{lemma}\label{L:int_D:LB}
% Let $\sigma$ given by \eqref{sigma:gamma} such that $\gamma\le 1/d$ and $N=Hd$.
There exist constants $\delta_2\in(0,1)$ and  $C_2\in(0,\infty)$ such that for
any $\eps \in (0,\delta_2)$ and any $n$ of the form $n=2^p$ for some 
$p\in\N^+$ with $n \le \log\log\log(1/\eps)$, there is a subset $D$ of 
$B_{\eps^{\beta}}^n$ with the following properties:
\begin{enumerate}
\item[\rm (i)] Every $(t_1, \dots, t_n) \in D$ satisfies 

\begin{align}\label{E:sigma-t1}
\sigma(|t_1|) \ge 2^{-p}\eps,
\end{align}
and 
\begin{align}\label{E:sigma-ti}
\sigma(d(t_i, F_k)) \ge 2^{k-p} 
\eps
\end{align}
 for all $k$ and $i$ 
with $0 \le k < p$ and $2^k < i \le 2^{k+1}$;
\item[\rm (ii)] Let $\mathcal{J}_n(D)$ denote the integral
    \[
    \mathcal{J}_n(D) := \int_D \frac{1}{\sigma^d(|t_1|)} \prod_{0 \le k < p} \prod_{2^k < i \le 2^{k+1}} \frac{1}{\sigma^d(d(t_i, F_k))} \, dt_1 \cdots dt_n.
    \]
Then, the following estimate holds:
    \begin{align}\label{E:int_D:LB}
    \mathcal{J}_n(D) \ge C_2^n 2^{-p} \prod_{0 \le k < p} \left[ (2^k)! \cdot \left( 2^{k-p} \, \Psi(\varepsilon) \right)^{2^k} \right],
    \end{align}
    where $\Psi$ is defined in \eqref{E:psi}.
\end{enumerate}
\end{lemma}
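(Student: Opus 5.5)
Here is the plan. I construct $D$ one point at a time, in the dyadic order $t_1$, then $t_2$, then the blocks $\{t_i : 2^k<i\le 2^{k+1}\}$ for $k=0,\dots,p-1$, and I integrate iteratively in the reverse order, $t_n$ first. To each $i$ I attach an admissible region for $t_i$ given the earlier points. For $t_1$ it is the shell $\{\sigma^*(2^{-p}\eps\cdot c)\le |t_1|\le \eps^\beta\}$. For $2^k<i\le 2^{k+1}$ it is
\[
\Big\{t\in B_{\eps^\beta}: \rho_k\le d(t,F_k),\ \ d\big(t,\{t_{2^k+1},\dots,t_{i-1}\}\big)\ge \rho_k\Big\},
\]
where $\rho_k$ satisfies $\sigma(\rho_k)\ge 2^{k-p}\eps$; concretely $\rho_k:=\sigma^*(2^{k-p}\eps)$ up to a constant, using $\sigma(\sigma^*(r))\sim r$. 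The second constraint, separation from points already placed in the same block, is not needed for (i). I would drop it unless it is needed; the natural route is simpler. $D$ is the set of tuples in which every $t_i$ lies in its region, so (i) holds by construction.

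For (ii) the key one-step estimate is this. For $m=2^k$ fixed points $F_k\subset B_{\eps^\beta}$,
\[
\int_{B_{\eps^\beta}}\frac{\mathbf 1\{d(t,F_k)\ge\rho_k\}}{\sigma^d(d(t,F_k))}\,dt\ \ge\ c\,2^k\cdot 2^{k-p}\Psi(\eps)\quad(\ast)
\]
up to constants absorbed into $C_2^n$. Since $(\ast)$ does not depend on $t_{2^k+1},\dots,t_{i-1}$, integrating the block gives the factor $(c2^{k}2^{k-p}\Psi)^{2^k}$. I must then recover $(2^k)!$ instead of $(2^k)^{2^k}$, and Stirling only gives $(2^k)^{2^k}\ge (2^k)!$, which is the favorable direction. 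So a lower bound $\ge (2^k)!\,(2^{k-p}\Psi)^{2^k}$ holds once the constant is absorbed. The first point contributes $\int 1/\sigma^d(|t_1|)\,dt_1\gtrsim\Psi(\eps)\ge 2^{-p}$ for small $\eps$.

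To prove $(\ast)$, partition $B_{\eps^\beta}$ into Voronoi cells of $F_k$. On the cell of $t_j$ we have $d(t,F_k)=|t-t_j|$, and the integral over the annulus $\rho\le|t-t_j|\le R_j$ is $\asymp\int_\rho^{R_j}\frac{dr}{r\,L^d(r)}=f(\rho)-f(R_j)$, by polar coordinates and $N=Hd$. The difficulty is that cells can be small when points cluster, giving little contribution. My first remedy is to compare with the total: $\sum_j\lambda_N(\text{cell}_j)=\lambda_N(B_{\eps^\beta})$, so on average the cell radius is $\asymp 2^{-k/N}\eps^\beta$. Since $f$ is increasing in $1/r$, I would bound from below only over balls of radius $R=c\,2^{-k/N}\eps^\beta$, using the points $t_j$ whose ball $B(t_j,R)$ has, say, at least half its volume in its own cell. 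A packing count shows at least a constant fraction of the $2^k$ points qualify, because the balls of radius $R$ have total volume a small fraction of $B_{\eps^\beta}$. Each qualifying point contributes $\gtrsim f(\rho_k)-f(R)$.

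It remains to check that $f(\rho_k)-f(R)\gtrsim\Psi(\eps)$, which in the case $\gamma<1/d$ is even better than $2^{k-p}\Psi$. We have $\log(1/\rho_k)\approx \frac1H\log(1/\eps)+(p-k)\log 2/H$ and $\log(1/R)\approx\beta\log(1/\eps)+\frac kN\log2$. Because $n\le\log\log\log(1/\eps)$, the $p,k$ corrections are negligible, so the difference is $\asymp(\log 1/\eps)^{1-\gamma d}$ when $\gamma<1/d$, and $\asymp\log(1/(H\beta))$ when $\gamma=1/d$. In both cases it is $\asymp\Psi(\eps)$, exactly as in Lemma \ref{L:sojourn:UB}. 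The main obstacle is the clustering/packing argument, which secures a constant fraction of good points uniformly in the configuration. If it fails, I would instead restrict $D$ further, requiring all points of $F_k$ to be mutually $R$-separated; this is cheap because the mass near small distances is negligible, and it makes the Voronoi cells contain disjoint balls of radius $R/2$ automatically.
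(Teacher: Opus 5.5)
\paragraph{Gap: the one-step bound $(\ast)$ fails on configurations your $D$ allows.}
Your iteration needs $(\ast)$ uniformly in the configuration $F_k$. You only impose $d(t_i,F_k)\ge\rho_k$ with $\rho_k\approx\sigma^*(2^{k-p}\eps)$, and you drop within-block separation. So nothing prevents $F_k$ from clustering at scales between $\rho_k$ and $\eps^\beta$. For instance, all $2^k$ points may lie in a ball of radius $O(2^k\rho_k)$, which is a condition of positive measure inside your $D$.

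In that case the far region contributes like a single point, i.e.\ $\lesssim\Psi(\eps)$. The region near the cluster contributes $\lesssim 2^k\bigl(f(\rho_k)-f(C2^k\rho_k)\bigr)\lesssim k2^k(\log(1/\eps))^{-\gamma d}=o(\Psi(\eps))$. Hence the left side of $(\ast)$ is $O(\Psi(\eps))$, while the right side is $c\,2^{2k-p}\Psi(\eps)$, which is about $c\,n\Psi(\eps)/4$ for $k=p-1$.

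Your packing count does not exclude this. Small total volume of the balls $B(t_j,R)$ says nothing about how much they overlap. For $2^k$ points clustered at a scale $\ll R$, the Voronoi cells at scale $R$ are cones of solid angle $\approx 2^{-k}$, so essentially no $t_j$ has half of $B(t_j,R)$ in its own cell.

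Nor are such configurations negligible. The weight $1/\sigma^d(d(t_i,F_k))$ spreads each new point over all distance scales between $\rho_k$ and $\eps^\beta$ from an old point. So clustered configurations carry most of the mass of $\mathcal{J}_n$.

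\paragraph{The fallback fails for the opposite reason.}
To keep the next configuration $R$-separated you must impose $d(t_i,F_k)\ge R$, with $R$ of order $2^{-j/N}\eps^\beta$ for some $j\le p$. This caps the $t_i$-integral at $\lambda_N(B_{\eps^\beta})/\sigma^d(R)\asymp 2^j(\log(1/\eps))^{-\gamma d}$. That falls short of the required order $2^{2k-p}(\log(1/\eps))^{1-\gamma d}$ by a factor $\gtrsim\log(1/\eps)/(\log\log\log(1/\eps))^2$. The mass at small distances is not negligible: it is exactly where $\Psi(\eps)\asymp\int_{\sigma^*(\eps)}^{\eps^\beta}\frac{dr}{r\,L^d(r)}$ comes from.

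\paragraph{Missing idea: a multiscale hierarchy that builds separation into $D$.}
This is what the paper does.
\begin{itemize}
\item Choose scales $\eps^\beta=\eps_0>\eps_1>\dots>\eps_p$ with $\eps_{k+1}\le\eps_k/8$, $\sigma(\eps_{k+1})\ge 2^{k-p}\eps$, and $f(\eps_{k+1})-f(\eps_k/4)\gtrsim 2^{k-p}\Psi(\eps)$. This is where $n\le\log\log\log(1/\eps)$ is used.
\item Require the new points of block $k$ to lie in the shells $\{\eps_{k+1}\le|s-t_{a(i)}|\le\eps_k/4\}$ around their own parents. Here $a$ is a bijection from $\{2^k+1,\dots,2^{k+1}\}$ onto $\{1,\dots,2^k\}$.
\item Inductively $F_k$ is $\eps_k$-separated. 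This gives (i) and makes the shells around distinct parents disjoint.
\item Hence the union over the $(2^k)!$ bijections is disjoint. This, not $m^m\ge m!$, is the source of the factorial.
\item The bound $d(t_i,F_k)\le|t_i-t_{a(i)}|$ reduces each factor to a polar integral $\asymp f(\eps_{k+1})-f(\eps_k/4)$.
\end{itemize}

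The factor $2^{k-p}$ is the price of separation. The log-ranges $[\eps_{k+1},\eps_k/4]$ are disjoint, so the per-level budgets $b_k$ satisfy $\sum_k b_k\lesssim\Psi(\eps)$. The choice $b_k\propto 2^k$ maximizes $\prod_k b_k^{2^k}$ under that constraint. Your remark that a full $\Psi(\eps)$ per point at every level is ``even better'' overlooks this trade-off.

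Finally, with $\rho_k:=\sigma^{-1}(2^{k-p}\eps)$ (and the analogous inner radius for $t_1$), your $D$ contains this hierarchical set. So (ii) does hold for your $D$ by positivity of the integrand; what fails is your argument for it.
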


\begin{proof}
First, we construct a decreasing sequence $(\varepsilon_k)_{0 \le k \le p}$ in $(0,\varepsilon^{\beta})$ that satisfies the following key properties, for the two cases $\gamma<1/d$ and $\gamma=1/d$, respectively, for some $\delta_2 \in (0,1)$ and a constant $\mathbf{C} > 0$:
\begin{enumerate}
    \item[\textbf{(P1)}] $\varepsilon_{k+1} \le \varepsilon_k/8$ for all $k=0,\dots,p-1$ and $\varepsilon < \delta_2$.
    \item[\textbf{(P2)}] $\sigma(\varepsilon_1) \ge 2^{-p} \varepsilon$.
    \item[\textbf{(P3)}] $\sigma(\varepsilon_{k+1}) \ge 2^{k-p} \varepsilon$ for all $k=1,\dots,p-1$.
    \item[\textbf{(P4)}] $f(\varepsilon_{k+1}) - f(\varepsilon_k/4) \ge \mathbf{C} 2^{k-p} \Psi(\varepsilon)$.
\end{enumerate}

\emph{\textbf{Case 1}: $\gamma < 1/d$}. Define the sequence $(\eps_k)_{0\le k\le p}$ by setting $\eps_0=\eps^{\beta}$ and 
\begin{align*}
% \label{E:eps_k}
    \eps_{k}={\eps}^{{\mathbf{C}2^{k-p}}+\beta} \quad \text{for $k=1,\ldots,p$,}
\end{align*}
where $\mathbf{C}>0$ is a constant to be specified later. We now verify that 
\textbf{(P1)}--\textbf{(P4)} are satisfied. 

\textbf{(P1)} Let $k\in \{0,\ldots,p-1\}$.
Then 
\[
\varepsilon_{k+1}/\varepsilon_k=\varepsilon^{\mathbf{C}2^{k-p}}\le \varepsilon^{\mathbf{C}/n} \le \varepsilon^{\mathbf{C}/\log\log\log(1/\varepsilon)} =\operatorname{o}(1) \quad \text{as $\varepsilon \rightarrow 0$,}
\]
where we have used $n = 2^{p}$ and $n \le \log\log\log(1/\varepsilon)$ to obtain the two inequalities, respectively.
Then for some $\delta_2\in (0,1)$ small enough, we have 
\begin{align*}
    \eps_{k+1}/\eps_k\le 1/8 \quad\text{for all $\eps\in (0,\delta_2)$}.
\end{align*}    

\textbf{(P2)} Since $\gamma<1/d\le 1$ and $1<\beta<1/H$, then for any $0<\mathbf{C}<1/H-\beta$ we have 
\begin{align*}
\sigma(\varepsilon_1)=\varepsilon^{H(\mathbf{C}2^{1-p}+\beta)}\left(\left(\mathbf{C}2^{1-p}+\beta\right)\log(1/\varepsilon)\right)^{\gamma}\gtrsim \varepsilon^{H(\mathbf{C}+\beta)}2^{\gamma(1-p)}\ge 2^{-p}\varepsilon.
\end{align*}

\textbf{(P3)} In the same way, for $0<\mathbf{C}<\bigl(1/H-\beta\bigl)/2$, we have
\begin{align*}
\sigma(\varepsilon_{k+1})\gtrsim \varepsilon^{H(2\mathbf{C}+\beta)}2^{\gamma
(k-p)} \ge 2^{k-p}\varepsilon.
\end{align*}

\textbf{(P4)} Let $k\in \{0,\ldots,p-1\}$. Then using the definition \eqref{D:f_cases} of $f$ we have
\begin{align}\label{E:f-eps-4}
    &f(\varepsilon_{k+1})-f(\varepsilon_k/4)\nonumber\\
    &= \left(2\mathbf{C}2^{k-p}+\beta\right)^{1-\gamma d}\left(\log(1/\varepsilon)\right)^{1-\gamma d}
    - \left(\log(4)+(\mathbf{C}2^{k-p}+\beta)\log(1/\varepsilon)\right)^{1-\gamma d}\nonumber\nonumber\\
    &=\left(\log(1/\varepsilon)\right)^{1-\gamma d}\left[\left(2\mathbf{C}2^{k-p}+\beta\right)^{1-\gamma d}- \left(\frac{\log(4)}{\log(1/\varepsilon)}+(\mathbf{C}2^{k-p}+\beta)\right)^{1-\gamma d}\right]\nonumber\\
    &=f(\varepsilon)\left(\mathbf{C}2^{k-p}-\frac{\log(4)}{\log(1/\varepsilon)}\right)\frac{1-\gamma d}{(\beta+\Tilde{c})^{\gamma d}},
\end{align}
where $\Tilde{c}$ is such that $\mathbf{C}2^{k-p}+\frac{\log(4)}{\log(1/\varepsilon)}<\Tilde{c}<2\mathbf{C}2^{k-p}$, since $2^p\le \log\log\log(1/\varepsilon)$ and hence 
\begin{align}\label{E:mvt}
\mathbf{C}2^{k-p}+\frac{\log(4)}{\log(1/\varepsilon)}=\mathbf{C}2^{k-p}\left(1+ \operatorname{O}\left(\frac{2^p}{\log(1/\varepsilon)}\right)\right)=\mathbf{C}2^{k-p}\bigl(1+\operatorname{o}(1)\bigl).
\end{align}
In the same way, since $2^{p}\log(4)/\log(1/\varepsilon)=\operatorname{o}(1)$, combining \eqref{E:f-eps-4} and \eqref{E:mvt} we obtain
\begin{align*}
f(\varepsilon_{k+1})-f(\varepsilon_k/4)\ge f(\varepsilon)\mathbf{C} 2^{k-p}\left(1-\frac{2^{p}\log(4)}{\log(1/\varepsilon)}\right)\frac{1-\gamma d}{\beta+2\mathbf{C}}=\,\widetilde{\mathbf{C}} 2^{k-p}f(\varepsilon).
\end{align*}

\emph{\textbf{Case 2}: $\gamma = 1/d$}.
Define the sequence $(\eps_k)_{0\le k\le p}$ recursively by setting $\eps_0=\eps^{\beta}$ and 
\begin{align*}
% \label{E:eps_k}
    \eps_{k+1}=\Bigl(\frac{\eps_{k}}{4}\Bigl)^{e^{\mathbf{c}2^{k-p}}} \quad \text{for}\quad k=0,\ldots,p-1,
\end{align*}
where $\mathbf{c}>0$ is a constant which will be specified later. 

\textbf{(P1)} Note that $(\eps_k)$ is decreasing.
%and
%\begin{align}\label{E:eps-bd}
%    \eps_{k+1}\le \frac{\eps_k}{4}\le \frac{\eps^{\beta}}{4}\quad \text{for all}\quad k=0,\ldots,p-1
%\end{align}
Moreover, as $\eps \to 0$,
\begin{align*}
\frac{\eps_{k+1}}{\eps_k}= \frac{1}{4}\left(\frac{\eps_{k}}{4}\right)^{(e^{\mathbf{c}2^{k-p}}-1)}
\le \frac{1}{4}\left(\frac{\eps^{\beta}}{4}\right)^{(e^{\mathbf{c}/\log\log\log(1/\eps)}-1)}\le \frac{1}{4}\left(\frac{\eps^{\beta}}{4}\right)^{{\mathbf{c}/\log\log\log(1/\eps)}}=o(1).
\end{align*}
Then there is some $\delta_2\in (0,1)$ such that 
\begin{align*}
    \eps_{k+1}/\eps_k\le 1/8 \quad\text{for all $\eps\in (0,\delta_2)$}.
\end{align*}

\textbf{(P2)} Notice that $\eps_1=({\eps^{\beta}}/{4})^{e^{\mathbf{c}2^{-p}}}$. We require that $\mathbf{c}\le \log\bigl(\frac{1-\eta}{H\beta}\bigl)$ for some $\eta\in (0,1-H\beta)$, so that $H\beta e^{\mathbf{c}} \le 1-\eta$. Then for all $\eps$ small enough,
\[
\sigma(\eps_1) \ge \eps_1^H \ge  \left(\frac{\eps^{\beta}}{4}\right)^{He^{\mathbf{c}2^{-p}}}\ge \left(\frac{\eps}{4^{1/\beta}}\right)^{H\beta e^{\mathbf{c}}} \ge \left( \frac{\eps}{4^{1/\beta}}\right)^{1-\eta} \ge 2^{-p}\eps.
\]

\textbf{(P3)}  It is not hard to check that 
\begin{align*}
\eps_{k+1}=\left(\frac{\eps^{\beta}}{4}\right)^{e^{\mathbf{c}(2^{k+1}-1)2^{-p}}}\times\left(\frac{1}{4}\right)^{\sum_{\ell=1}^{k}e^{\mathbf{c}\left(\frac{2^{\ell}-1}{2^{\ell-1}}\right)2^{k-p}}}\quad\text{for all $k=1,\ldots, p-1$}.
\end{align*}
Therefore,
\begin{align*}
    \sigma(\eps_{k+1})&=\eps_{k+1}^H\left(\log(1/\eps_{k+1})\right)^{1/d}\\
    &\ge \left(\frac{\eps}{4^{1/\beta}}\right)^{H\beta e^{\mathbf{c}(2^{k+1}-1)2^{-p}}}\left(\frac{1}{4}\right)^{H\sum_{\ell=1}^{k}e^{\mathbf{c}\left(\frac{2^{\ell}-1}{2^{\ell-1}}\right)2^{k-p}}}\left(\beta e^{\mathbf{c}(2^{k+1}-1)2^{-p}}\log\left({4^{1/\beta}}/{\eps}\right)\right)^{1/d}\\
    &\ge C\,e^{(\mathbf{c} 2^{k-p}/d)}\,\eps\, \left(\frac{\eps}{4^{1/\beta}}\right)^{\left(H\beta e^{\mathbf{c}(1-2^{-p})}-1\right)}\left(\frac{1}{4}\right)^{H{k}e^{\mathbf{c}2^{k+1-p}}}(\log(1/\eps))^{1/d}\\
    &\ge C2^{k-p}\,\eps\,\left(\frac{\eps}{4^{1/\beta}}\right)^{\left(H \beta e^{\mathbf{c}(1-(\log\log\log(1/\eps))^{-1})}-1\right)}\left(\frac{1}{4}\right)^{He^{\mathbf{c}}\log_2(\log\log\log(1/\eps))}\\
    &\ge C 2^{k-p}\eps^{1-\eta}\left(\frac{1}{4}\right)^{He^{\mathbf{c}}\log_2(\log\log\log(1/\eps))}\\
    & \ge 2^{k-p}\eps
\end{align*}
uniformly for all $\eps\in (0,\delta_2)$, where we used the facts that $2^{k-p}\le(2^{k+1}-1)2^{-p}\le 1-2^{-p}$, $2^p\le \log\log\log(1/\eps)$, and the choice of $\mathbf{c}$ above, which ensures that
\begin{align*}
    \mathbf{c}\left(1-(\log\log\log(1/\eps))^{-1}\right)\le \log\left(\frac{1-\eta}{H\beta}\right).
\end{align*} 
% for some $\eta\in (0,1-H\beta)$ 

\textbf{(P4)} Let $k\in \{0,\ldots,p-1\}$. Then using the definition \eqref{D:f_cases} of $f$ we have
\begin{align*}
f(\varepsilon_{k+1})-f(\varepsilon_k/4)=\log\left( \frac{\log(1/\varepsilon_{k+1})}{\log(4/\varepsilon_k)} \right)= \log\left( \exp(\mathbf{c} 2^{k-p}) \right) = \mathbf{c} 2^{k-p}.
\end{align*}

Hence, the properties \textbf{(P1)}--\textbf{(P4)} are verified in both cases $\gamma<1/d$ and $\gamma=1/d$. Now, we proceed to construct the set $D$. For $t\in B_{\varepsilon^{\beta}}$ and $k=0,1,\dots,p-1$, let
\begin{align*}
H_k(t) = \{ s \in \R^N : \eps_{k+1} \le |s-t| \le \eps_k/4 \}
\end{align*}
be the spherical shells centered at $t$. For $k \ge 0$, let
\begin{align*}
    A_k = \left\{ a:\{2^k+1, 2^k+2, \dots, 2^{k+1}\} \to \{1,\dots, 2^k\} \ | \ a \text{ is bijective} \right\}.
\end{align*}
Note that the cardinality of $A_k$ is
\begin{align}\label{E:card:A_k}
    \# A_k = 2^k!.
\end{align}
Define
\begin{align}\begin{split}\label{E:D}
D = \Big\{ (t_1, \dots, t_n) & : t_1 \in H_0(0), \text{and for every 
$0 \le k < p$,} \\
& \quad (t_{2^k+1}, \dots, t_{2^{k+1}}) \in \bigcup_{a_k \in A_k} 
\left(H_k(t_{a_k(2^k+1)}) \times \cdots \times 
H_k(t_{a_k(2^{k+1})})\right) \Big\}.
\end{split}\end{align}
By \textbf{(P1)}, we see that if $(t_1,\dots,t_n)\in D$, then for 
every $0\le k < p$, there exists $a_k \in A_k$ such that for all $2^k 
< i \le 2^{k+1}$,
\begin{align*}
|t_i| &\le |t_i-t_{a_k(i)}| + |t_{a_k(i)}|\\
&\le |t_i-t_{a_k(i)}| + |t_{a_k(i)}-t_{a_{k-1}(a_k(i))}|+|t_{a_{k-1}
(a_k(i))}|\\
& \le |t_i-t_{a_k(i)}| + \sum_{j=0}^{k-1} |t_{a_{j+1}(\cdots(a_k(i)))}
-t_{a_j(\cdots (a_k(i)))}|\\
& \le \sum_{j=0}^k \frac{\eps_j}{4} 
\le \frac{1}{4}\sum_{j=0}^k \frac{\eps_0}{4^j} \le \frac{\eps^{\beta}}{3} \le \eps^{\beta},
\end{align*}
since  $\eps_0 = \eps^{\beta}$. This verifies that $D\subset B_{\eps^{\beta}}^n$.

Next, we verify that $D$ satisfies the desired properties (i) and 
(ii) in the lemma for both cases $\gamma <1/d$ and $\gamma = 1/d$.  Let $(t_1, \dots, t_n) \in D$. Then by {\bf (P2)}, $\sigma(|t_1|) \ge \sigma(\eps_1) \ge  2^{-p}\eps$. This shows \eqref{E:sigma-t1}.
In order to show \eqref{E:sigma-ti}, we claim that 
\begin{align}\label{d(t,F):bd}
{\eps_{k+1} \le d(t_i, F_k) \le \eps_k/4} \quad \text{for } 0 \le k < p \text{ and } 2^k < i \le 2^{k+1}
\end{align}
and
\begin{align}\label{H_k:disjoint}
    \text{for each $0 \le k < p$, } 
    H_k(t_{1}), \dots,  H_k(t_{2^k}) \text{ are pairwise disjoint.}
\end{align}
In fact, the right-hand inequality of \eqref{d(t,F):bd} follows 
immediately from $t_i\in H_k(t_{a(i)})$ according to the definition of 
$D$ in \eqref{E:D}.
The left-hand inequality of \eqref{d(t,F):bd} can be proved by 
induction. 
Indeed, when $k = 0$ and thus $i = 2$, we see that for any $t_2\in 
H_0(t_1)$,
\[
d(t_2, F_0) = |t_2 - t_1| \in [\eps_1, \eps_0/4].
\]
For the induction hypothesis, we assume that for certain $k$ where $0\le k < p$,
\begin{align}\begin{split}\label{E:int_D:ind:hyp}
&\eps_{k+1} \le d(t_i, F_k) \le \eps_k/4 \quad \text{for all $i$ with $2^k < i \le 2^{k+1}$ and}\\
&\text{$|t_l - t_j| \ge \eps_k$ for all $l, j \in \{1, \dots, 2^k\}$ with $l \ne j$.}
\end{split}\end{align}
We first show that the second part of \eqref{E:int_D:ind:hyp} holds when $k$ is replaced by $k+1$, so let us consider $l' \ne j' \in \{1, \dots, 2^{k+1}\}$.
This is certainly true if both $l', j' \in \{1,\dots, 2^k\}$, so we now consider $l' \in \{1,\dots, 2^{k+1}\}$ and $j' \in \{2^k+1,\dots, 2^{k+1}\}$.
In particular, if $l' \in \{1, \dots, 2^k\}$, then by induction hypothesis \eqref{E:int_D:ind:hyp},
\[
    |t_{l'}-t_{j'}| \ge d(t_{j'}, F_k) \ge \varepsilon_{k+1};
\]
and if $l' \in \{2^k+1,\dots, 2^{k+1}\}$, then by the triangle inequality, 
$a(l')\ne a(j')$, induction hypothesis \eqref{E:int_D:ind:hyp}, and \textbf{(P1)}, we have
\begin{align*}
% |t_{l'} - t_{j'}| &\ge \min\{|t_m-t_{j'}| : 1 \le m \le 2^k \} - d(t_{l'}, F_k)\\
% &\ge \eps_k - \eps_k/4\\
% & \ge \eps_{k+1}.
|t_{l'} - t_{j'}|
& \ge |t_{a(l')} - t_{a(j')}| - |t_{l'}-t_{a(l')}| - |t_{j'} - t_{a(j')}|\\
& \ge \varepsilon_k - \varepsilon_k/4 - \varepsilon_k/4 = \varepsilon_k/2\\
&\ge \varepsilon_{k+1}.
\end{align*}
Hence, in any case, for $2^{k+1} < i \le 2^{k+2}$,
\begin{align*}
d(t_i, F_{k+1}) &\ge \min\{ |t_m - t_{a(i)}| : 1 \le m \le 2^{k+1} \} - |t_i-t_{a(i)}|\\
& \ge \eps_{k+1} - \eps_{k+1}/4\\
& \ge \eps_{k+2}.
%& \ge \sigma^*(\eps_{k+2}),
\end{align*}
%where we have used \eqref{E:r/sigma^*} to obtain the last inequality.
By induction, \eqref{E:int_D:ind:hyp} holds for all $0 \le k < p$ and the claim \eqref{d(t,F):bd} follows.
Also, the second part of \eqref{E:int_D:ind:hyp} together with the triangle inequality implies property \eqref{H_k:disjoint}.

Now, by \eqref{d(t,F):bd} and \textbf{(P3)} we have that for $1 \le k < p$ and
$2^k < i \le 2^{k+1}$,
\begin{align*}
{\sigma(d(t_i,F_k)) \ge \sigma(\eps_{k+1})\ge 2^{k-p}\eps,}
\end{align*}
which is \eqref{E:sigma-ti}.
The proof of (i) is now complete.

It remains to verify the estimate in (ii) of Lemma \ref{L:int_D:LB}.
The property \eqref{H_k:disjoint} above implies that for every $0\le k < p$,
\begin{align*}
    \bigcup_{a_k \in A_k}\left(H_k(t_{a_k(2^k+1)})\times \dots \times H_k(t_{a_k(2^{k+1})})\right)
\end{align*}
is a disjoint union. Indeed, if $a_k \neq a'_k\in A_k$, there exists $ 2^k+1 \le i\le 2^{k+1}$ such that $a_k(i) \neq a'_k(i)$. By \eqref{H_k:disjoint}, the sets $H_k(t_{a_k(i)})$ and $H_k(t_{a'_k(i)})$ are disjoint, meaning the Cartesian products are disjoint at their $i$-th coordinate.
Then, recall the definition of $D$ in \eqref{E:D} and use the preceding disjointness property to write
\begin{align*}
&\mathcal{J}_n(D)\\
&=\quad \int_D \frac{1}{\sigma^d(|t_1|)} \prod_{0 \le k < p} \prod_{2^k < i \le 2^{k+1}} \frac{1}{\sigma^d(d(t_i, F_k))} dt_1 \cdots dt_n\\
&= \int_{H_0(0)} \frac{dt_1}{\sigma^d(|t_1|)} 
\prod_{0 \le k < p} \int_{\bigcup_{a_k \in A_k}\left(H_k(t_{a_k(2^k+1)})\times \dots \times H_k(t_{a_k(2^{k+1})})\right)} \prod_{2^k<i\le 2^{k+1}}\frac{1}{\sigma^d(t_i,F_k)}dt_{2^k+1}\cdots dt_{2^{k+1}}\\
&= \int_{H_0(0)} \frac{dt_1}{\sigma^d(|t_1|)} \prod_{0\le k < p} \sum_{a_k \in A_k} \prod_{2^k < i \le 2^{k+1}} \int_{H_k(t_{a_k(i)})} \frac{dt_i}{\sigma^d(d(t_i,F_k))} .
\end{align*}
We integrate in the order $dt_n, dt_{n-1},\dots, dt_1$.
For fixed $t_1,\dots,t_{2^k}$ ($0\le k < p$), we use the obvious inequality 
$d(t_i,F_k) \le |t_i-t_{a_k(i)}|$ for $2^k < i \le 2^{k+1}$, then use the 
polar coordinate, the definitions of $L$ and $f$ in \eqref{E:sigma-def} and 
\eqref{D:f_cases}, and the property \textbf{(P4)} to deduce that for all 
$a_k \in A_k$ and for all $i$ with $2^k<i\le 2^{k+1}$,
\begin{align*}
\int_{H_k(t_{a_k(i)})} \frac{dt_i}{\sigma^d(d(t_i, F_k))} 
&\ge \int_{H_k(t_{a_k(i)})} \frac{ dt_i}{\sigma^d(|t_i - t_{a_k(i)}|)} 
= C \int_{\eps_{k+1}}^{\eps_k/4} \frac{d\rho}{\rho L^d(\rho)} \\
&=C \left(f(\eps_{k+1}) - f(\eps_k/4)\right)
\ge \mathbf{C} 2^{k-p},
\end{align*}
where the constant $C$ does not depend on $k$, $i$, $t_1,\dots,t_{2^k}$, or $a_k \in A_k$.
Similarly, we have
\begin{align*}
    \int_{H_0(0)} \frac{dt_1}{\sigma^d(|t_1|)} =C\bigl(f(\eps_1)-f(\eps_0)\bigl) \ge \mathbf{C}\, 2^{-p}.
\end{align*}
Therefore, the above estimates and \eqref{E:card:A_k} lead to \eqref{E:int_D:LB} for some uniform constant $C_2$.
This completes the proof of Lemma \ref{L:int_D:LB}.
\end{proof}

\begin{lemma}\label{L:sojourn:LB}
% Let $\sigma$ be given as in \eqref{E:sigma-def}.
There exist constants $\delta_2\in(0,1)$ and $C_3\in(0,\infty)$ such that for all $\eps \in (0, \delta_2)$ and all $n$ of the form $n = 2^p$ for some $p\in \N^+$ with $n \le \log\log\log(1/\eps)$,
\begin{align}\label{E:sojourn:LB}
\E[T_\eps^n] \ge (C_3 n\eps^d\Psi(\eps))^n.
\end{align}
\end{lemma}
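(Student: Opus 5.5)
The plan is to combine Lemma \ref{L:PX:LB} and Lemma \ref{L:int_D:LB} directly, then do the combinatorial bookkeeping of the factorials.

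\emph{Step 1: restrict the integral to $D$.} Start from \eqref{E:ETn2a}. Since the integrand is nonnegative, restricting the domain of integration to the set $D\subset B_{\eps^\beta}^n$ given by Lemma \ref{L:int_D:LB} gives a lower bound for $\E[T_\eps^n]$. We take $\delta_2$ to be the constant of that lemma, so $\eps\in(0,\delta_2)$ and $n=2^p\le\log\log\log(1/\eps)$.

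\emph{Step 2: apply the probability bound on $D$.} Every point of $D$ satisfies \eqref{E:cond:t_i}, by part (i) of Lemma \ref{L:int_D:LB}. So Lemma \ref{L:PX:LB} applies pointwise on $D$, and
\[
\E[T_\eps^n]\ge C_0^n\eps^{nd}\,\mathcal{J}_n(D)\ge C_0^nC_2^n\eps^{nd}\,2^{-p}\prod_{0\le k<p}\Bigl[(2^k)!\,\bigl(2^{k-p}\Psi(\eps)\bigr)^{2^k}\Bigr],
\]
where the last inequality is part (ii) of Lemma \ref{L:int_D:LB}. The powers of $\Psi$ multiply to $\Psi(\eps)^{\sum_{k<p}2^k}=\Psi(\eps)^{n-1}$. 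Since $\Psi(\eps)\ge 1$ for small $\eps$ (because $1-\gamma d\ge0$), we can bound this below by $\Psi(\eps)^{n}/\Psi(\eps)$.

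\emph{Step 3: the combinatorics (main obstacle).} We must show two things.

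First, the factorials. By Stirling, $(2^k)!\ge (2^k/e)^{2^k}$, so
\[
\prod_{k<p}(2^k)!\,(2^{k-p})^{2^k}\ge \prod_{k<p}\bigl(2^{2k-p}/e\bigr)^{2^k}.
\]
Next we compute the exponent of $2$, namely $\sum_{k<p}(2k-p)2^k$. Using $\sum_{k<p}k2^k=(p-2)2^p+2$, this equals
\[
2\bigl((p-2)2^p+2\bigr)-p(2^p-1)=p2^p-4\cdot2^p+p+4 .
\]
Hence the product is at least $c^n\,2^{pn}=c^n n^n$. This is the crucial point: the factorial $(2^k)!$ from counting the matchings $A_k$ exactly compensates the small factors $2^{k-p}$ and produces $n^n$. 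Together with the factors $2^{-p}=1/n$ and $e^{-(n-1)}$, everything is absorbed into $c^n$.

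Second, the leftover factor $\Psi(\eps)^{-1}$ for $\gamma<1/d$. This does not fit into $C^n$ uniformly, and it is the delicate point. I would handle it by noting that the $k=0$ level (the integral over $t_1$) actually produces a full factor $\Psi$. Indeed, $\int_{H_0(0)}\sigma^{-d}(|t_1|)\,dt_1=C(f(\eps_1)-f(\eps_0))$, and one may choose $\eps_1$ a fixed power of $\eps$ smaller than $\eps^\beta$, giving $\gtrsim\Psi(\eps)$. If that adjustment does not fit (P2), the fallback is the observation that $\Psi(\eps)^{-1}\ge (\log(1/\eps))^{-1}$ while the statement only requires a constant-to-the-$n$ loss. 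In that case I would instead redo Step 2 with $\Psi^{n}$ by tracking the $t_1$ integral separately.

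In either case we obtain $\E[T_\eps^n]\ge (C_3n\eps^d\Psi(\eps))^n$, after shrinking $\delta_2$ if necessary.
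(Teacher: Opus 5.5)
Your proposal is correct and follows the paper's route: restrict \eqref{E:ETn2a} to $D$, apply Lemma~\ref{L:PX:LB} pointwise and Lemma~\ref{L:int_D:LB}(ii), then use Stirling to turn $\prod_k (2^k)!\,(2^{k-p})^{2^k}$ into $c^n n^n$. You are also right that Lemma~\ref{L:int_D:LB}(ii) as literally stated yields only $\Psi(\eps)^{n-1}$, which the paper passes over when it writes $(\eps^d\Psi(\eps))^n$.

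Your first remedy is the right one, and it needs no change to $\eps_1$. By (P4) with $k=0$, $\int_{H_0(0)}\sigma^{-d}(|t_1|)\,dt_1 = C\bigl(f(\eps_1)-f(\eps_0/4)\bigr)\ge C\mathbf{C}\,2^{-p}\Psi(\eps)$, so the $t_1$ factor already carries a full $\Psi(\eps)$. Discard the ``fallback'': for fixed $n$ and $\gamma<1/d$, a lone factor $\Psi(\eps)^{-1}$ tends to $0$ as $\eps\to0$ and cannot be absorbed into $C_3^n$.
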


\begin{proof}
Choose $\delta_2, C_2, C_3$ and the subset $D\subset B_{\eps^{\beta}}^n$ according to Lemma \ref{L:int_D:LB}.
Properties (i) and (ii) in Lemma \ref{L:int_D:LB} combined with Lemma \ref{L:PX:LB} lead to the following:
\begin{align*}
\E[T_\eps^n] &= \int_{B_{\varepsilon^{\beta}}^n} \P\{ |X(t_1)| \le \eps , \dots, |X(t_n)| \le \eps \} dt_1 \cdots dt_n\\
& \ge C_0^n \eps^{nd} \int_D \frac{1}{\sigma^d(|t_1|)} \prod_{0 \le k < p} \prod_{2^k < i \le 2^{k+1}} \frac{1}{\sigma^d(d(t_i, F_k))} dt_1 \cdots dt_n\\
& \ge C_0^n C_2^n \eps^{nd} 2^{-p}  \prod_{0 \le k < p}\left[ 2^k! \left( 2^{k-p} \Psi(\varepsilon) \right)^{2^k} \right].\\
& = C_0^n C_2^n \bigl(\eps^{d}\Psi(\eps)\bigl)^n  2^{-p} \prod_{0 \le k < p} \left[2^k!\, 2^{(k-p)2^k}\right].
\end{align*}
By Stirling's formula, we have
\begin{align*}
    \prod_{0 \le k < p} 2^k! \ge \prod_{0 \le k < p} c_0^k 2^{k2^k}
\end{align*}
for some constant $0<c_0<1$.
By differentiating the identity $\sum_{k=0}^{p-1} x^k = (x^p-1)/(x-1)$, multiplying by $x$, and then putting $x=2$, we can deduce that $\sum_{k=0}^{p-1}k2^k = p2^p-2^{p+1}+2$.
It follows that
\begin{align}\label{E:prod:2^k!}
    \prod_{0 \le k < p} 2^k! \ge c_0^{p(p-1)/2} 2^{p2^p}2^{-2^{p+1}} \ge c_0^{2^p}2^{p2^p}4^{-2^p} = (c_0/4)^n n^n,
\end{align}
where we have used $p(p-1)/2 \le 2^p$ in the second inequality and $2^p=n$ in the last equality. 
Finally, we can apply \eqref{E:prod:2^k!} and \eqref{E:prod2k-p} to the lower bound for $\E[T_\varepsilon^n]$ above to obtain \eqref{E:sojourn:LB} with constant $C_3 = C_0C_2c_1c_0/4$.
\end{proof}

Recall the Paley--Zygmund inequality: for any nonnegative random variable $Y$ and any constant $\theta\in[0,1]$,
\begin{align}\label{E:PZ}
\P\{ Y \ge \theta\E[Y]\} \ge (1-\theta)^2\frac{(\E[Y])^2}{\E[Y^2]}.
\end{align}

\begin{proposition}\label{P:sojourn:LD}
% Let $\sigma$ be given as in \eqref{E:sigma-def}.
There exist constants $\eps_0\in(0,1)$ and $K_1, K_2\in(0,\infty)$ such that
\begin{align}\label{E:sojourn:LD}
\P\{ T_\eps \ge u\eps^d \Psi(\eps) \} \ge \tfrac14 e^{-K_1 u}
\end{align}
for all $\eps \in (0, \eps_0)$ and $1 \le u \le K_2 \log\log\log(1/\eps)$.
\end{proposition}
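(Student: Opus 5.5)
The plan is to apply the Paley--Zygmund inequality \eqref{E:PZ} to $Y = T_\eps^n$ for a suitably chosen dyadic $n=2^p$, using the matched moment bounds of Lemma \ref{L:sojourn:UB} and Lemma \ref{L:sojourn:LB}. Throughout, write $a := \eps^d\Psi(\eps)$.

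For $n=2^p \le \log\log\log(1/\eps)$, Lemma \ref{L:sojourn:LB} gives $\E[T_\eps^n] \ge (C_3 n a)^n$. Lemma \ref{L:sojourn:UB} applied with $2n$ gives
\[
\E[T_\eps^{2n}] \le (2C_1 n a)^{2n}.
\]
Take $\theta = 1/2$ in \eqref{E:PZ} with $Y=T_\eps^n$. This yields
\[
\P\bigl\{T_\eps^n \ge \tfrac12 \E[T_\eps^n]\bigr\} \ge \tfrac14 \frac{(\E[T_\eps^n])^2}{\E[T_\eps^{2n}]} \ge \tfrac14 \Bigl(\frac{C_3}{2C_1}\Bigr)^{2n}.
\]
On this event we have $T_\eps \ge 2^{-1/n} C_3 n a \ge \tfrac12 C_3 n a$. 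Hence
\[
\P\{T_\eps \ge \tfrac12 C_3 n a\} \ge \tfrac14 e^{-K n}, \qquad K := 2\log(2C_1/C_3)
\]
(note $C_3\le 2C_1$ necessarily, so $K\ge 0$; we may enlarge $K$ if needed).

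Now let $u$ be given with $1\le u\le K_2\log\log\log(1/\eps)$. Choose $p$ minimal such that $n=2^p \ge 2u/C_3$, with the convention $p\ge 1$. Then $n \le \max(2,\,4u/C_3)\le c u$ for a constant $c$, using $u\ge1$. To apply Lemma \ref{L:sojourn:LB} we need $n \le \log\log\log(1/\eps)$; this holds if $K_2 \le C_3/4$ and, to handle the case $n=2$, $\eps_0$ is small enough that $\log\log\log(1/\eps_0)\ge 2$. Then $\tfrac12 C_3 n a \ge u a$, so
\[
\P\{T_\eps \ge u a\} \ge \tfrac14 e^{-Kn} \ge \tfrac14 e^{-Kcu},
\]
which is \eqref{E:sojourn:LD} with $K_1 = Kc$. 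We also take $\eps_0 \le \min(\delta_1,\delta_2)$ so that both moment lemmas apply.

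The main obstacle is the restriction to dyadic $n$ together with the cap $n\le \log\log\log(1/\eps)$ in Lemma \ref{L:sojourn:LB}. The dyadic restriction costs only a factor of 2 in $n$, which is absorbed into the constants. The cap is exactly what forces the range restriction $u\le K_2\log\log\log(1/\eps)$, and it explains why $K_2$ must be small relative to $C_3$. The remaining care is that the upper bound of Lemma \ref{L:sojourn:UB} holds for all $n$, including $2n$, which may exceed the cap; this is fine because no cap is imposed there.
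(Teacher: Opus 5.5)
Your proposal is correct and is essentially the paper's proof: Paley--Zygmund with $\theta=1/2$ applied to $T_\eps^n$ for a dyadic $n\asymp u$, combining the lower bound of Lemma \ref{L:sojourn:LB} at $n$ with the upper bound of Lemma \ref{L:sojourn:UB} at $2n$. Your bookkeeping is slightly cleaner than the paper's. You use $C_3$ for the threshold and the exponent $2n$ in the moment ratio. You also handle $n=2$ by shrinking $\eps_0$, whereas the paper uses a $C_2\wedge 2$ choice of $p$.
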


\begin{proof}
Take $\eps_0 = \min\{\delta_1,\delta_2\}$, where $\delta_1$ and $\delta_2$ are the constants given by Lemmas \ref{L:sojourn:UB} and \ref{L:sojourn:LB}.
Take $K_2 = (C_2 \wedge 2)/4$, where $C_2$ is the constant given by Lemma \ref{L:int_D:LB}.
Let $\eps\in(0,\eps_0)$ and $1\le u \le K_2 \log\log\log(1/\eps)$.
Since $2u/(C_2\wedge 2) \ge 1$, we can find $p\in\N^+$ such that $2^{p-1} \le 2u/(C_2 \wedge 2) \le 2^p$.
Set $n=2^p$.
Note that $n \le 4u/(C_2 \wedge 2) \le \log\log\log(1/\eps)$ and $u \le 
C_2 n/2$.
Then, by Lemma \ref{L:sojourn:LB} and the Paley--Zygmund inequality \eqref{E:PZ} with $\theta=1/2$,
\begin{align*}
&\P\{ T_\eps \ge u \eps^d \Psi(\eps) \} \\
&\ge \P\left\{ T_\eps^n \ge \tfrac12\left(C_2 n\eps^d \Psi(\eps)\right)^n\right\}
\ge \P\left\{ T_\eps^n \ge \tfrac12\E[T_\eps^n]\right\}
\ge \frac{(\E[T_\eps^n])^2}{4\,\E[T_\eps^{2n}]}.
\end{align*}
Applying the moment estimates of the sojourn time in Lemmas \ref{L:sojourn:UB} and \ref{L:sojourn:LB}, we get that
\begin{align*}
\P\{ T_\eps \ge u \eps^d \Psi(\eps) \} 
& \ge \frac{\left(C_3 n\eps^d \Psi(\eps)\right)^{2n}}{4\left(2C_1 n \eps^d \Psi(\eps)\right)^{2n}} = \frac14\left( \frac{C_3}{2C_1}\right)^n \ge \tfrac14 e^{-C_4 n}
    \end{align*}
for some constant $C_4>\log(2C_1/C_3)$.
Since $n \le 4u/(C_2\wedge 2)$, we obtain \eqref{E:sojourn:LD} with $K_1:=4C_4/(C_2\wedge 2)$.
\end{proof}

\section{Proof of Theorem \ref{T:Haus:meas}}
\label{S:pf:T2}

Throughout this section, we let Assumptions \ref{a:X} and \ref{a:SLND} hold with $\sigma$ given by \eqref{sigma:gamma} where $N=Hd$ and $\gamma \le 1/d$. 
Recall that the covariance function satisfies \eqref{E:cov}.
This implies that $X$ has the following spectral representation:
\begin{align}\label{E:spectral2}
X_j(t) = \int_{\R^N} (e^{it\cdot \xi} -1) W_j(d\xi), \quad j = 1, \dots, d,
\end{align}
where $W_1, \dots, W_d$ are i.i.d.\ centered complex-valued Gaussian random measures whose \emph{control measure} is the spectral measure $m$ in \eqref{E:cov}, such that
\[
	\E[W_1(A)\overline{W_1(B)}] = m(A\cap B) \quad \text{and} 
    \quad 	W_1(-A) = \overline{W_1(A)}
\]
for all Borel sets $A, B \subset \R^N$ with finite $m$-measure.
% The measure $m$ is called the \emph{spectral measure} of $X$.

In order to create independence, define, for $0\le a \le b$, the truncated Gaussian random field $X(a, b) = \{ X(a,b,t)=(X_1(a,b,t), \ldots, X_d(a,b,t)), t \in \R^N\}$ by
\begin{align}\label{E:X(a,b,t)}
    X_j(a,b,t) = \int_{a\le|\xi|<b} (e^{it\cdot\xi}-1) W_j(d\xi), \quad t \in \R^N, j=1,\dots, d.
\end{align}
Recall $\sigma^*$ defined in \eqref{D:sigma*}.
The following lemma quantifies the approximation error between the  Gaussian random fields $X(a, b)$ and $X$, which is an extension of Lemma 3.2 in \cite{T95}.
\begin{lemma}\cite[Lemma 3.3 and Corollary 3.1]{X96}\label{L:error:bd}
    There exist constants $K_0>0$, $B>0$ such that for any $B<a<b$ and $0<r<B^{-1}$, the following holds:
    let $A = r^2 a^2 \sigma^2(a^{-1})+\sigma^2(b^{-1})$ such that $\sigma^*(\sqrt{A}) \le r/2$, then for any
    \begin{align*}
        u \ge K_0 \left(A \log \frac{K_0 r}{\sigma^*(\sqrt{A})} \right)^{1/2},
    \end{align*}
    we have
    \begin{align*}
        \P\left\{ \sup_{|t|\le r} |X(t)-X(a,b,t)| \ge u \right\} \le \exp\left(-\frac{u^2}{K_0 A} \right).
    \end{align*}
\end{lemma}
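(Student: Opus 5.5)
The plan is to follow Talagrand's argument (Lemma 3.2 of \cite{T95}), as adapted in \cite{X96}. First write the error field explicitly. By \eqref{E:X(a,b,t)} and \eqref{E:spectral2}, each coordinate of $X(t)-X(a,b,t)$ equals
\[
\int_{|\xi|<a}(e^{it\cdot\xi}-1)W_j(d\xi)+\int_{|\xi|\ge b}(e^{it\cdot\xi}-1)W_j(d\xi).
\]
Call these the low-frequency part $Y_j(t)$ and the high-frequency part $Z_j(t)$. We then control the canonical metric of $Y+Z$ on the ball $\{|t|\le r\}$.

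For the low-frequency part, $|e^{it\cdot\xi}-e^{is\cdot\xi}|\le |t-s||\xi|$ gives
\[
\E|Y_j(t)-Y_j(s)|^2\le |t-s|^2\int_{|\xi|<a}|\xi|^2m(d\xi).
\]
Using $1-\cos x\ge c x^2$ for $|x|\le1$ together with \eqref{variogram}, one gets $\int_{|\xi|<a}|\xi|^2 m(d\xi)\lesssim a^2\sigma^2(a^{-1})$. This follows by integrating $\sigma^2(|t|)\gtrsim \int (1-\cos t\cdot\xi)m(d\xi)$ over directions with $|t|=a^{-1}$. It yields both the diameter bound $\sup_{|t|\le r}\E|Y_j(t)|^2\lesssim r^2a^2\sigma^2(a^{-1})$ and the Lipschitz metric bound. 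For the high-frequency part, $\E|Z_j(t)-Z_j(s)|^2\le 4m\{|\xi|\ge b\}\lesssim\sigma^2(b^{-1})$, by the standard Tauberian-type bound obtained by averaging $1-\cos(t\cdot\xi)$ over $|t|\le b^{-1}$. We also have $\E|Z_j(t)-Z_j(s)|^2\le \E|X_j(t)-X_j(s)|^2\lesssim\sigma^2(|t-s|)$, since truncation in frequency reduces increments. Hence the increment metric $\tilde d$ of $Y+Z$ satisfies $\tilde d(s,t)\lesssim\min\{\sqrt A,\ \sigma(|t-s|)+ra\sigma(a^{-1})|t-s|/r\}$, and $\sup_t\tilde d(0,t)\lesssim\sqrt A$.

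Next apply Dudley's entropy bound together with Borell--TIS. The covering number of $\{|t|\le r\}$ in $\tilde d$ at scale $\eta\le\sqrt A$ is at most $(Kr/\sigma^*(\eta))^N$, because $\sigma(|t-s|)\le\eta$ once $|t-s|\le\sigma^*(\eta)$. The linear term is harmless: $ra\sigma(a^{-1})\le\sqrt A$, so at scale $\eta$ it only needs mesh $r\eta/\sqrt A\ge\sigma^*(\eta)$ in the relevant range. Then
\[
\E\sup_{|t|\le r}|X(t)-X(a,b,t)|\lesssim\int_0^{\sqrt A}\sqrt{\log(Kr/\sigma^*(\eta))}\,d\eta .
\]
Using the regular variation of $\sigma^*$ with index $1/H$, this integral is $\lesssim \sqrt{A\log(K_0r/\sigma^*(\sqrt A))}$. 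The hypothesis $\sigma^*(\sqrt A)\le r/2$ keeps the logarithm bounded below. The Gaussian concentration inequality then gives
\[
\P\Bigl\{\sup_{|t|\le r}|X(t)-X(a,b,t)|\ge \E\sup+v\Bigr\}\le e^{-v^2/(2cA)}.
\]
For $u\ge K_0(A\log(\cdot))^{1/2}$ with $K_0$ large, we have $u-\E\sup\ge u/2$, which gives the stated tail. Taking $B$ large ensures that all the asymptotic properties of $\sigma$, and its range $[0,\delta_0]$, apply.

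The main obstacle is the spectral estimates $\int_{|\xi|<a}|\xi|^2m(d\xi)\lesssim a^2\sigma^2(a^{-1})$ and $m\{|\xi|\ge b\}\lesssim\sigma^2(b^{-1})$, which must be derived from the upper bound \eqref{variogram} alone. The first follows directly from $1-\cos x\gtrsim x^2$ near $0$. The second requires the averaging trick and the fact that $\sigma$ is nondecreasing. Once these are in hand, the rest is routine chaining.
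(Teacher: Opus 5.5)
Your argument is correct and is essentially the proof behind the citation. The paper itself gives no proof and only quotes \cite[Lemma 3.3 and Corollary 3.1]{X96}, where, following Lemma 3.2 of \cite{T95}, one bounds $\int_{|\xi|<a}|\xi|^2\,m(d\xi)\lesssim a^2\sigma^2(a^{-1})$ and $m\{|\xi|\ge b\}\lesssim\sigma^2(b^{-1})$ by the same truncation and averaging inequalities you use, and then feeds the resulting diameter and metric bounds into Talagrand's entropy tail estimate (Dudley's bound plus Gaussian concentration).

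One small simplification: $X-X(a,b,\cdot)$ is a frequency truncation of $X$, so its increment metric is bounded directly by $d(s,t)\le c_1\sigma(|t-s|)$, and your separate treatment of the linear term in the covering-number estimate is not needed.
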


The following lemma is essential for us to construct an economic random
covering for $X(I)$ to prove Theorem \ref{T:Haus:meas}.
\begin{lemma}\label{L:P:Talag}
Let $R_p = 2^{-2^{2^p}}$.
Then, there exist $\beta_0 \in (1,1/H)$, $c_0>0$ and $n_0, p_0 \in \N^+$ such that for all $\beta \in [\beta_0, 1/H)$, $p \ge p_0$ and $n_0 \le n \le p$, we have
\begin{align}\begin{split}\label{E:P:Talag}
\P\Big\{ \exists \, r \in [R_{2p}, R_p] \text{ such that } \lambda_N\{ t \in \R^N : |t| \le r^\beta, |X(t)| \le 3r \} \ge c_0 n r^d \Psi(r) \Big\}\\
\ge 1 - 2^{- (1+2d) 2^{2p-n+n_0}},
\end{split}\end{align}
where $\Psi$ is defined in \eqref{E:psi}.
\end{lemma}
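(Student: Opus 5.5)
Following Talagrand \cite{T98}, the plan is to build many approximately independent trials at geometrically separated scales inside $[R_{2p},R_p]$. Each trial will succeed with probability at least $\tfrac14 e^{-K_1 c n}$ by Proposition~\ref{P:sojourn:LD}, and the trials are made independent by the spectral truncation \eqref{E:X(a,b,t)}, with Lemma~\ref{L:error:bd} controlling the truncation error.

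\textbf{Scales.} Choose radii $r_\ell$, $\ell = 1,\dots,M$, decreasing very fast inside $[R_{2p},R_p]$, for instance $r_{\ell+1} = r_\ell^{\kappa}$ for a large fixed $\kappa$ (or a doubly exponential spacing). For scale $\ell$, take a frequency band $[a_\ell,b_\ell)$ with $a_\ell \ll r_\ell^{-\beta}$ and $b_\ell \gg r_\ell^{-1/H}$, chosen so that the bands of different $\ell$ are disjoint; this is possible because $r_{\ell+1}$ is far smaller than $r_\ell$. Set $Y_\ell = X(a_\ell,b_\ell,\cdot)$. Since the $W_j$ are independent over disjoint frequency sets, $Y_1,\dots,Y_M$ are independent. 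The number of scales $M$ must be of order $2^{2p}$ up to constants, so that the target exponent $2^{2p-n+n_0}$ can be reached. Since $\log_2\log_2(1/R_p)=2^p$, there is room for about $2^{2p}-2^p$ scales when $\kappa=2$; I would take $r_\ell = 2^{-2^{j}}$ with $j$ ranging over $[2^p,2^{2p}]$, thinned by a constant factor to keep the bands separated.

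\textbf{Single-scale estimate and independence.} For each $\ell$, consider the event
\[
E_\ell = \bigl\{\lambda_N\{|t|\le r_\ell^\beta : |Y_\ell(t)|\le 2r_\ell\} \ge c_0 n r_\ell^d \Psi(r_\ell)\bigr\}.
\]
To bound $\P(E_\ell)$ from below, apply Proposition~\ref{P:sojourn:LD} with $\eps = r_\ell$ and $u = c_0 n$ to $X$. Using Lemma~\ref{L:error:bd}, $\sup_{|t|\le r_\ell^\beta}|X-Y_\ell| \le r_\ell$ outside an event of superexponentially small probability. Here $A = r_\ell^{2\beta}a_\ell^2\sigma^2(a_\ell^{-1}) + \sigma^2(b_\ell^{-1})$ is tiny compared with $r_\ell^2$, because $a_\ell \ll r_\ell^{-\beta}$ and $b_\ell^{-1} \ll r_\ell^{1/H}$. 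This yields $\P(E_\ell) \ge \tfrac14 e^{-K_1 c_0 n} - \text{tiny}$. The condition $u \le K_2\log\log\log(1/\eps)$ holds because $n \le p$ and $\log\log\log(1/r_\ell) \approx \log 2^{p} \cdot$const $\gtrsim p$. By independence,
\[
\P\Bigl(\bigcap_\ell E_\ell^c\Bigr) \le \bigl(1-\tfrac14 e^{-K_1c_0 n}\bigr)^M \le \exp\bigl(-\tfrac M4 e^{-K_1c_0n}\bigr).
\]
Taking $c_0$ small enough that $K_1 c_0 \le \log 2$ gives $e^{-K_1c_0n}\ge 2^{-n}$, so this is at most $\exp(-c2^{2p-n})$. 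This is at most $2^{-(1+2d)2^{2p-n+n_0}}$ once $n_0$ is chosen large enough to absorb the constants.

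\textbf{Transfer back to $X$.} On $E_\ell$, combined with the good-approximation event (whose failure probabilities, summed over $\ell$, are also negligible), every $t$ with $|Y_\ell(t)|\le 2r_\ell$ satisfies $|X(t)|\le 3r_\ell$. So $r = r_\ell$ witnesses the event in \eqref{E:P:Talag}.

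\textbf{Main obstacle.} The delicate step is fitting the frequency bands. For the approximation error $A$ to be $o(r_\ell^2/\log)$ while the bands stay disjoint for about $2^{2p}$ scales, the condition $\beta$ close to $1/H$ is needed: the window $[r_\ell^{-\beta}, r_\ell^{-1/H}]$ must be exceeded only mildly at each end, so the first term $r^{2\beta}a^{2}\sigma^2(a^{-1}) \approx r^{2\beta}a^{2-2H}$ is small when $a = r^{-\beta}\cdot r^{\eta}$. This is where $\beta_0$ is chosen. Logarithmic factors from $L$ and the $\log(K_0r/\sigma^*(\sqrt A))$ term must also be checked to be harmless.
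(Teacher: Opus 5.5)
Your architecture matches the paper's: independent spectral truncations at many scales, Proposition~\ref{P:sojourn:LD} at each scale, and Lemma~\ref{L:error:bd} to transfer back from $2r_\ell$ to $3r_\ell$. But there is a genuine gap in the counting step. With $r_\ell=2^{-2^j}$, $2^p\le j\le 2^{2p}$ (thinned or not), consecutive radii satisfy $r_{\ell+1}\le r_\ell^2$. So $[R_{2p},R_p]$ contains at most $2^{2p}-2^p+1$ of them, and $M\le 2^{2p}$. Your failure bound is $\exp\bigl(-\tfrac M4 e^{-K_1c_0n}\bigr)$, while the target is $2^{-(1+2d)2^{2p-n+n_0}}=\exp\bigl(-(1+2d)(\log 2)\,2^{n_0}\,2^{2p-n}\bigr)$. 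Take $n=n_0$ and even replace $e^{-K_1c_0n}$ by $1$: you would still need $M\ge 4(1+2d)(\log 2)\,2^{2p}>8\cdot 2^{2p}$, which is impossible. Your remedy, ``choose $n_0$ large enough to absorb the constants,'' goes the wrong way. The exponent depends only on $n-n_0$, and increasing $n_0$ shrinks the allowed failure probability, so it strengthens the statement. The lower restriction $n\ge n_0$ only helps with side issues, such as guaranteeing $u=c_0n\ge 1$; Proposition~\ref{P:sojourn:LD} requires this and you did not check it. What the argument needs is a \emph{large} constant in front of $2^{2p}$ in the number of independent trials, and nothing in your construction supplies it.

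The paper supplies it as follows. Take $r_\ell=2^{-\zeta^\ell}$ with $\zeta\in(1,2)$ close to $1$, so that $[R_{2p},R_p]$ contains about $\frac{\log 2}{\log\zeta}(2^{2p}-2^p)$ radii. Then choose $\zeta$ so that $\frac{\log 2}{\log\zeta}$ is large compared with $2^3(1+2d)(\log 2)\,2^{n_0}$; the factor $2^3$ comes from the per-trial probability $\ge 2^{-n-3}$ left after subtracting the truncation error. The binding constraint is then disjointness of the frequency bands, and this is what forces $\beta$ close to $1/H$. In your scheme, thinning until the bands separate works for every $\beta\in(1,1/H)$, which is a sign that it cannot produce enough scales. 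Your band description also needs correcting. The condition $a_\ell\ll r_\ell^{-\beta}$ does not make $r_\ell^{2\beta}a_\ell^{2-2H}$ small compared with $r_\ell^2$. One needs $a_\ell\le r_\ell^{-(\beta-\beta')/(1-H)}$ with $\beta'>1$, which is a strictly smaller power since $H\beta<1$. With $a_\ell=r_\ell^{-(\beta-\beta')/(1-H)}$ and $b_\ell=r_\ell^{-\beta'/H}$, the condition $b_\ell\le a_{\ell+1}$ reads $\zeta(\beta-\beta')/(1-H)\ge\beta'/H$. For $\zeta$ near $1$ this holds only when $\beta\uparrow 1/H$ and $\beta'\downarrow 1$, where both sides tend to $1/H$. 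Finally, $\beta$ is now chosen after $\zeta$. So you must check that the constants $K_1,K_2$ of Proposition~\ref{P:sojourn:LD}, which come from $\beta$-dependent moment bounds, do not deteriorate as $\beta\uparrow 1/H$, or else normalize the threshold accordingly.
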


\begin{proof}
Let  $1<\beta<1/H$. First, Proposition \ref{P:sojourn:LD} ensures that there exist $0<r_0<1$ and $K_1, K_2>0$ such that for all $r\in (0,r_0)$ and $1 \le u\leq K_2 \log\log\log(1/r)$,
\begin{align}\label{Cl:rescale}
\mathbb{P}\Bigl\{\lambda_N\bigl\{ t \in \R^N : |t| \le r^\beta, |X(t)| \le r \bigl\}\geq u {r}^d\Psi(r)  \Bigl\}\geq\tfrac{1}{4}e^{-K_1 u}.
\end{align}
Let $\zeta \in (1,2)$ be a number close to 1 whose value will be 
determined later. We choose $1<\beta'<\beta<1/H$ (depending on $\zeta$)
with $\beta'$ close to 1, $\beta$ close to $1/H$ such that
\begin{align}\label{beta/beta'}
    \frac{\beta}{\beta'}>\frac{1}{\zeta}\left(\frac{1}{H}-1\right)+1.
\end{align}
This is possible since $\zeta>1$ implies that the right-hand side is $<1/H$ while the left-hand side increases to $1/H$ as $\beta \uparrow 1/H$ and $\beta' \downarrow 1$.
Define 
\begin{align}
    r_\ell = 2^{-\zeta^\ell},\qquad 
    a_\ell = r_\ell^{-(\beta-\beta')/(1-H)},\qquad
    b_\ell = r_\ell^{-\beta'/H},
\end{align}
and $A_\ell = r_\ell^{2\beta} a_\ell^2 \sigma^2(a_\ell^{-1}) + 
\sigma^2(b_\ell^{-1})$. Notice that
\begin{align}\label{R<r<R}
    R_{2p} \le r_\ell \le R_p \ \text{ is equivalent to }\ 2^{2^p} \le \zeta^\ell \le 2^{2^{2p}}.
\end{align}
Since $\beta/\beta' < 1/H$, we have $a_\ell < b_\ell$ for all $\ell \in \N^+$. Moreover, since $r_{\ell+1} = r_\ell^\zeta$, and \eqref{beta/beta'} implies $\zeta(\beta-\beta')/(1-H)>\beta'/H$, it follows that 
\begin{align}\label{b<a}
    b_\ell \le a_{\ell+1} \quad \text{for all $\ell \in \N^+$.}
\end{align}
Recall the form of $\sigma$ and $L$ in \eqref{E:sigma-def}.
If we choose $\beta''$ such that $1<\beta''<\beta'$, then
\begin{align}\begin{split}\label{A<r}
    A_\ell &\le r_\ell^{2\beta} r_\ell^{-2(\beta-\beta')}L^2(r_\ell^{(\beta-\beta')/(1-H)}) + r_\ell^{2\beta'} L^2(r_\ell^{\beta'/H})\\
    &\le 2r_\ell^{2\beta''} \quad \text{for $\ell$ large.}
\end{split}\end{align}
It is possible to choose a constant $c_0 >0$ such that if $1 \le n \le p$ and $2^{2^p} \le \zeta^\ell \le 2^{2^{2p}}$, then 
\begin{align}\label{c_0}
    c_0n \le K_2 \log\log\log(1/r_\ell) \quad \text{and} \quad e^{K_1 c_0} \le 2, 
\end{align}
where $K_1, K_2$ are the constants that ensure \eqref{Cl:rescale} holds.
Recall the truncated process $X(a_\ell,b_\ell,t)$ introduced in \eqref{E:X(a,b,t)}, and define the events $E_\ell, F_\ell,$ and $ G_\ell$ by
\begin{align*}
    E_\ell&=\left\{ \sup_{|t|\le r_\ell^\beta} |X(t)-X(a_\ell,b_\ell,t)| \ge r_\ell \right\},\\
    F_\ell &= \left\{ \lambda_N\{ |t| \le r_\ell^\beta : |X(t)| \le r_\ell \} \ge c_0 n r_\ell^d \Psi(r_\ell)  \right\},\\
    G_\ell &= \left\{ \lambda_N\{ |t| \le r_\ell^\beta : |X(a_\ell,b_\ell,t)| \le 2r_\ell \} \ge c_0 n r_\ell^d \Psi(r_\ell) \right\}.
\end{align*}
To apply Lemma \ref{L:error:bd}, we need to verify the conditions for the choices $r:=r_{\ell}^{\beta}$, $u:=r_\ell$, $a:=a_\ell$, $b:=b_\ell$, and $A:=A_\ell$. In fact, applying $\sigma^*$ to both sides of \eqref{A<r}, we obtain $\sigma^*(\sqrt{A_\ell})\le r_{\ell}^{\beta''/H}\le r_{\ell}^{\beta}/2$, since $H\beta<1<\beta''$. 
On the other hand, from the definition of $A_{\ell}$ we trivially have $A_\ell \ge \sigma^2(b_\ell^{-1})$, which implies $\sigma^*(\sqrt{A_\ell}) \ge b_\ell^{-1} = r_\ell^{\beta'/H}$. Combining these bounds, for $\ell$ large, we have
\begin{align*}
K_0\Bigl(A_{\ell}\log \frac{K_0 r_\ell^\beta}{\sigma^*(\sqrt{A_\ell})}\Bigl)^{1/2} \le r_{\ell}^{\beta''}\bigl(\log \bigl( K_0 r_\ell^{\beta - \beta'/H} \bigl)\bigl)^{1/2} \le C\, r_{\ell}^{\beta''} (\log(1/r_\ell))^{1/2}\le r_{\ell}.
\end{align*}
Then, thanks to Lemma \ref{L:error:bd}, for $\ell$ 
large, we have
\begin{align*}
% \label{E:error:bd}
    \P(E_\ell)
    \le \exp\left( -\frac{1}{2K_0 r_\ell^{2\beta''-2}} \right).
\end{align*}
Owing to \eqref{R<r<R}, we can choose $p_1$ large enough so that if $p
\ge p_1$, then
\begin{align}\label{P(E)}
    \P(E_\ell) \le \exp\left(-\tfrac{1}{2K_0} 2^{\zeta^\ell(2\beta''-2)}\right) 
    \le \exp\left(-\tfrac{1}{2K_0} 2^{2^{p}(2\beta''-2)}\right) \le 2^{-n-3}
\end{align}
uniformly for all $n \le p$ and $\ell$ such that $2^{2^p} \le \zeta^\ell \le 2^{2^{2p}}$.
Because of \eqref{c_0}, we may apply \eqref{Cl:rescale} with $u= c_0 n$ to see that
\begin{align*}
    \P(F_\ell) \ge 2^{-2} e^{-K_1 c_0 n} \ge 2^{-n-2}.
\end{align*}
Since $F_\ell \cap E_\ell^c \subset G_\ell$, it follows that
\begin{align*}
    \P(F_\ell) \le \P(E_\ell) + \P(F_\ell \cap E_\ell^c) \le \P(E_\ell) + \P(G_\ell).
\end{align*}
Hence, for $1 \le n \le p$ and $2^{2^p} \le \zeta^\ell \le 2^{2^{2p}}$,
\begin{align}\label{P(G)}
    \P(G_\ell) 
    \ge \P(F_\ell) - \P(E_\ell)
    \ge 2^{-n-2} - 2^{-n-3} = 2^{-n-3}.
\end{align}
Let $A$ denote the event in \eqref{E:P:Talag}, i.e.,
\[
    A = \Big\{ \exists \, r \in [R_{2p}, R_p] \text{ such that } \lambda_N\{ |t| \le r^\beta : |X(t)| \le 3r \} \ge c_0 n r^d \Psi(r)  \Big\}.
\]
Then, by \eqref{R<r<R},
\begin{align}\label{P(A)}
    \P(A)
    &\ge \P\left\{ \exists \ell, 2^{2^p} \le \zeta^\ell \le 2^{2^{2p}} \text{ such that } \lambda_N\{ |t| \le r_\ell^\beta : |X(t)| \le 3r_\ell \} \ge c_0 n r_\ell^d \Psi(r_\ell) \right\}\nonumber\\
    & \ge \P\left( \bigcup_{\ell} (G_\ell \cap E_\ell^c) \right)
    \ge \P\left( \bigg(\bigcup_{\ell} G_\ell \bigg) \cap \bigg( \bigcap_{\ell} E_\ell^c \bigg) \right)\\\nonumber
    & \ge \P\left( \bigcup_{\ell} G_\ell \right) - \P\left( \bigcup_{\ell} E_\ell \right),
\end{align}
where $\ell$ runs through all integers such that $C_\zeta 2^p \le \ell \le C_\zeta 2^{2p}$ with $C_\zeta = (\log 2)/(\log \zeta)$.
Thanks to \eqref{b<a}, the processes $X(a_\ell, b_\ell, \cdot)$, $\ell \in \N^+$, are independent, which ensures that the events $\{G_\ell : C_\zeta 2^p \le \ell \le C_\zeta 2^{2p}\}$ are independent.
Hence, by \eqref{P(G)} and the elementary inequality $1-x\le \exp(-x)$, we deduce that for all $p \ge p_1$ and $n \le p$,
\begin{align*}
    \P\left( \bigcup_{C_\zeta 2^p \le \ell \le C_\zeta 2^{2p}} G_\ell \right)
    &= 1 - \prod_{C_\zeta 2^p \le \ell \le C_\zeta 2^{2p}} \left( 1-\P(G_\ell)\right)\\
    &\ge 1 - \left(1-2^{-n-3}\right)^{C_\zeta (2^{2p}-2^p)}\\
    & \ge 1 - \exp\left(-\tfrac{\log 2}{\log \zeta}(2^{2p}-2^p) 2^{-n-3}\right).
\end{align*}            
By choosing $1<\zeta<2$ close enough to 1, we can ensure there exists $n_0 \in \N^+$ such that for all 
$p \ge p_1$ and $n_0 \le n \le p$,
\begin{align}\label{P(UG)}
    \P\left( \bigcup_{C_\zeta 2^p \le \ell \le C_\zeta 2^{2p}} G_\ell \right)
    \ge 1 - \tfrac12 2^{-(1+2d)2^{2p-n+n_0}}.
\end{align}
Now, the uniform estimate \eqref{P(E)} ensures, for a sufficiently large $p_0 \ge p_1$, that 
\begin{align}\label{E:P(UE)}
\P\left( \bigcup_{\ell} E_\ell \right) \le C_\zeta \bigl(2^{2p}-2^{p}\bigl) \exp\left(-\frac{1}{2K_0}2^{2^{p}(2\beta''-2)}\right)\le \tfrac12 2^{-(1+2d)2^{2p-n+n_0}},
\end{align}
for all 
$p \ge p_0$ and $n_0 \le n \le p$. Putting \eqref{P(UG)} and \eqref{E:P(UE)} into \eqref{P(A)} 
yields
\begin{align*}
    \P(A) \ge 1 - \tfrac12 2^{-(1+2d)2^{2p-n+n_0}} - \tfrac12 2^{-(1+2d)2^{2p-n+n_0}} = 1-2^{-(1+2d)2^{2p-n+n_0}}.
\end{align*}
This completes the proof of Lemma \ref{L:P:Talag}.  
\end{proof}

Recall Vitali's covering lemma:

\begin{lemma}\cite[p.24, Theorem 2.1]{Mattila}\label{L:Vitali}
Given a family of closed balls $\mathscr{F}$ in $\R^d$ with bounded radius, there is
a disjoint subfamily $\mathscr{F}'$ of $\mathscr{F}$ such that the family $\mathscr{F}'' = \{ 5B : B \in \mathscr{F}' \}$ covers $\mathscr{F}$, where $cB$ denotes the ball with the same center as $B$ but whose radius is $c$ times the radius of $B$.
\end{lemma}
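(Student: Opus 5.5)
We would give the classical dyadic-scale greedy argument, which works verbatim in any metric space and in particular in $\R^d$. Let $R:=\sup\{\operatorname{rad}(B): B\in\mathscr{F}\}<\infty$. If $R=0$, all balls are points; then any maximal disjoint subfamily $\mathscr{F}'$ already covers $\bigcup\mathscr{F}$, since every point of $\bigcup\mathscr{F}$ lies in some chosen ball. So assume $R>0$. We split $\mathscr{F}$ into scales
\[
\mathscr{F}_k=\{B\in\mathscr{F}: R2^{-k-1}<\operatorname{rad}(B)\le R2^{-k}\},\qquad k\ge 0.
\]
Balls of radius $0$ will be treated at the end.

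Next we choose the disjoint subfamily scale by scale. Let $\mathscr{G}_0$ be a maximal disjoint subfamily of $\mathscr{F}_0$. Inductively, let $\mathscr{G}_k$ be a maximal disjoint subfamily of
\[
\{B\in\mathscr{F}_k: B\cap B'=\emptyset \text{ for all } B'\in\mathscr{G}_0\cup\dots\cup\mathscr{G}_{k-1}\}.
\]
Maximal subfamilies exist by Zorn's lemma, since a union of a chain of disjoint families is disjoint. In $\R^d$ one can avoid Zorn: disjoint balls of positive radius each contain a distinct rational point, so every such family is countable and a greedy enumeration suffices. Set $\mathscr{F}'=\bigcup_k\mathscr{G}_k$; it is disjoint by construction.

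The key step is the covering claim. Take $B=B(x,r)\in\mathscr{F}_k$. By maximality of $\mathscr{G}_k$, the ball $B$ meets some $B'=B(x',r')\in\mathscr{G}_0\cup\dots\cup\mathscr{G}_k$. Either $B\in\mathscr{G}_k$ itself, or it meets a ball of $\mathscr{G}_k$, or it was excluded because it meets an earlier one. Since $B'$ lies at scale $j\le k$, we have $r'>R2^{-j-1}\ge R2^{-k-1}\ge r/2$. Hence $r<2r'$. Because the balls intersect, $|x-x'|\le r+r'$. Therefore every $y\in B$ satisfies
\[
|y-x'|\le r+r+r'=2r+r'<5r',
\]
so $B\subset 5B'$. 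Finally, a radius-zero ball $\{x\}$ either meets a chosen positive-radius ball $B'$, and then $\{x\}\subset B'\subset 5B'$, or it can be added to $\mathscr{F}'$ without destroying disjointness. So we enlarge $\mathscr{F}'$ by a maximal disjoint family of such points.

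The only delicate point is the existence of the maximal subfamilies, handled as above. Everything else is the triangle-inequality computation, so no real obstacle is expected.
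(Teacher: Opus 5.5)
Your proof is correct and is the standard dyadic-scale, maximal-disjoint-subfamily argument, which is the proof of the cited result in Mattila (the paper cites it without reproving it); your conclusion that each ball satisfies $B\subset 5B'$ for a single selected $B'$ is slightly stronger than what the paper needs. The only loose point is the Zorn-free aside: the greedy enumeration has to run over the rational points, choosing at step $m$ (if possible) a ball containing $q_m$ that is disjoint from those already chosen, rather than over the possibly uncountable family of balls itself; since your Zorn's lemma argument already gives existence, this does not affect the proof.
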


We are ready to prove Theorem \ref{T:Haus:meas}.

\begin{proof}[Proof of Theorem \ref{T:Haus:meas}]
We extend Talagrand's sojourn-time based covering argument in \cite{T98} to prove the theorem.
Fix a compact interval $I$ in $\R^N$.
For any $r \ge 0$, define 
\[
    I_r = \{ t \in \R^N : \inf_{s \in I}|t-s| \le r \}.
\]
Let $R_p$, $\beta$, $c_0>0$ and $n_0,p_0 \in \N^+$ be given by Lemma \ref{L:P:Talag}.
For any $p \in \N^+$, define the random sets $U_p$, $V_p$ by
\begin{align*}
    &U_p = \Big\{ t \in I_2 : \exists\, r \in [R_{2p},R_p], \lambda_N\{ s \in B(t,r^{\beta}) : |X(t)-X(s)| \le 4r \} \ge c_0 p\, r^d \Psi(r) \Big\},\\
    &V_p = \Big\{ t \in I_1 : \exists\, r \in [R_{4p},R_{2p}], \lambda_N\{ s \in B(t,r^{\beta}) : |X(t)-X(s)| \le 4r \} \ge c_0 n_0 r^d \Psi(r) \Big\},
\end{align*}
where $\Psi$ is given by \eqref{E:psi}, and define the events $\Omega_{p,1}$ and $\Omega_{p,2}$ by
\begin{align}\begin{split}\label{Omegap1:Omegap2}
    &\Omega_{p,1} = \{ \lambda_N(U_p) \ge (1-2^{-2^p}) \lambda_N(I_2) \},\\
    &\Omega_{p,2} = \{ \lambda_N(V_p) \ge (1-2^{-(1+d)2^{4p}}) \lambda_N(I_1) \}.
\end{split}\end{align}
By Markov's inequality, Fubini's theorem, Lemma \ref{L:P:Talag}, and the stationarity of increments of $X$, for all $p\ge p_0$
\begin{align*}
    \P\{ \Omega_{p,1}^c\} &= \P\{ \lambda_N(I_2\setminus U_p) > 
    2^{-2^p} \lambda_N(I_2) \}\\
    &\le \frac{1}{2^{-2^p}\lambda_N(I_2)} \E[\lambda_N(I_2 \setminus U_p)]\\
    &= \frac{1}{2^{-2^p}\lambda_N(I_2)} \int_{I_2} \P\{ t \not\in U_p \} dt\\
    & \le \frac{1}{2^{-2^p}\lambda_N(I_2)} 2^{-(1+2d)2^{2p-p+n_0}}\lambda_N(I_2).
\end{align*}
%%%%here we use Lemma \ref{L:P:Talag} with $n=p$ (upper endpoint)%%%%%
Hence, $\sum_{p=1}^\infty \P\{\Omega_{p,1}^c\} < \infty$.
Similarly, 
%%%%%here we use Lemma \ref{L:P:Talag} with $n=n_0$ (lower endpoint)%%%%%
\begin{align*}
    \P\{\Omega_{p,2}^c\} 
    \le \frac{1}{2^{-(1+d)2^{4p}}\lambda_N(I_1)} 2^{-(1+2d)2^{4p}}\lambda_N(I_1)
\end{align*}
and hence $\sum_{p=1}^\infty \P\{ \Omega_{p,2}^c\} < \infty$.
Consider the event
\begin{align}\label{Omega_p3}
    \Omega_{p,3} = \bigcap_{\ell = p}^\infty \mathcal{E}_\ell, \quad \text{where } 
    \mathcal{E}_\ell = \left\{ \sup_{t,s\in I: |t-s|\le \sqrt{N} 2^{-\ell}} |X(t)-X(s)| \le K_3 \sigma(2^{-\ell}) \sqrt{\ell} \right\}.
\end{align}
    By Lemma 3.1 of \cite{X96}, we can fix a constant $K_3>0$ such that $\P(\mathcal{E}_\ell^c) \le e^{-\ell}$
for all sufficiently large $\ell$. %%%%% This is obtainted by applying equation (3.1) from Lemma 3.1 in \cite{X96} with r=\sqrt{N}2^{\ell} and u=K_3\sigma(2^{\ell})\sqrt{\ell} %%%%%
It follows that $\sum_{p=1}^\infty \P\{\Omega_{p,3}^c\} < \infty$.
Let $\Omega_p = \Omega_{p,1} \cap \Omega_{p,2} \cap \Omega_{p,3}$. 
Then
\[
    \sum_{p=1}^\infty \P\{\Omega_p^c\} < \infty.
\]
By the Borel--Cantelli lemma, with probability 1, $\Omega_p$ occurs for all sufficiently large $p$.

For any ball $A$ in $\R^d$, denote its radius by $r_A$. 
Let $\mathscr{F}_{p,1}$ be the family of closed balls $A$ in $\R^d$ with 
radius {$R_{2p} \le r_A \le 4R_p$} such that 
\begin{align}\label{A:F1}
    \lambda_N\{t \in I_2 : X(t) \in A \} \ge c_1 r_A^d \Psi(r_A) \log\log\log(1/r_A),
\end{align}
where $c_1>0$ is a constant such that
\begin{align}\label{c_1}
    c_0pr^d\Psi(r) \ge c_1 (4r)^d \log\log\log(1/(4r))  \Psi(4r) \quad \text{for all $p \ge 1$, $R_{2p}\le r\le R_p$.}
\end{align}
Let $\mathscr{F}_{p,1}'$ and $\mathscr{F}_{p,1}''$ be the families of balls obtained by applying Lemma \ref{L:Vitali} to $\mathscr{F}_{p,1}$, that is, $\mathscr{F}_{p,1}'$ is a subfamily of $\mathscr{F}_{p,1}$ containing disjoint balls, and $\mathscr{F}_{p,1}'' = \{ 5A : A\in \mathscr{F}_{p,1}' \}$ covers $\mathscr{F}_{p,1}$.

Next, consider the family $\mathscr{F}_{p,2}$ of closed balls $A$ in $\R^d$ with radius {$R_{4p} \le r_A \le 4R_{2p}$} that are disjoint from the balls in $\mathscr{F}_{p,1}''$ and satisfy
\begin{align}\label{A:F2}
    \lambda_N\{ t \in I_1 : X(t) \in A \} \ge c_2 n_0 r_A^d \Psi(r_A),
\end{align}
{where $c_2>0$ is a constant such that
\begin{align}\label{c_2}
    c_0r^d\Psi(r) \ge c_2 (4r)^d \Psi(4r) \quad \text{for all $p \ge 1$, $R_{4p}\le r\le R_{2p}$.}
\end{align}}
Similarly, let $\mathscr{F}_{p,2}'$ and $\mathscr{F}_{p,2}''$ be the families obtained by applying Lemma \ref{L:Vitali} to $\mathscr{F}_{p,2}$.

By \eqref{A:F1} and the property that the balls in the subfamily $\mathscr{F}_{p,1}'$ of $\mathscr{F}_{p,1}$ are disjoint, we have
\begin{align}\label{sum:F_p1}
    \sum_{A \in \mathscr{F}_{p,1}'} r_A^d \Psi(r_A) \log\log\log(1/r_A)
    \le c_1^{-1} \lambda_N(I_2).
\end{align}
{Next, observe that if $t \in I_1$, $X(t) \in A$ and $A \in \mathscr{F}_{p,2}$, then $t \not\in U_p$. Otherwise there exists $r \in 
[R_{2p},R_p]$ such that $\lambda_N\{ s \in I_2 : X(s)\in B(X(t), 4r) \} \ge 
c_0 p\, r^d \Psi(r)$.
Let $\widetilde{A}:=B(X(t),4r)$ with $r_{\widetilde{A}}:=4r$. Since $r_{\widetilde{A}}\in [R_{2p},4R_{p}]$, then by \eqref{c_1} we obtain that 
$$\lambda_N\{ s \in I_2 : X(s)\in \widetilde{A} \}\ge c_1 r^d_{\widetilde{A}}  \log\log\log(1/r_{\widetilde{A}})\Psi(r_{\widetilde{A}}).$$ Then $\widetilde{A}$ belongs to $\mathscr{F}_{p,1}$ and is thus covered by balls of $\mathscr{F}_{p,1}''$, but  $X(t)\in A\cap \widetilde{A}$, which is a contradiction since the balls of $\mathscr{F}_{p,1}''$ and $\mathscr{F}_{p,2}$ are disjoint.} It follows that
\begin{align*}
    \bigcup_{A \in \mathscr{F}_{p,2}'} \{t \in I_1 : X(t) \in A \} \subset I_2 \setminus U_p.
\end{align*}
The preceding and \eqref{A:F2} imply that on $\Omega_{p,1}$,
\begin{align*}
    \sum_{A \in \mathscr{F}_{p,2}'} r_A^d \Psi(r_A)
    \le c_2^{-1}n_0^{-1} \lambda_N(I_2 \setminus U_p) 
    \le c_2^{-1}n_0^{-1} 2^{-2^p} \lambda_N(I_2),
\end{align*}
and since $\log\log\log(1/r_A) \le K_4 p$ for some constant $K_4$, we have
\begin{align}\label{sum:F_p2}
    \sum_{A \in \mathscr{F}_{p,2}'} r_A^d \Psi(r_A) \log\log\log(1/r_A) 
    \le K_4 c_2^{-1}n_0^{-1} \lambda_N(I_2)  p2^{-2^p}.
\end{align}
Consider the family $\mathscr{G}_p$ of balls defined by 
\begin{align}\label{G_p}
    \mathscr{G}_p = \{{13}A : A \in \mathscr{F}_{p,1}'\} \cup \{{5}A: A \in \mathscr{F}_{p,2}'\}.
\end{align}
For each $p \ge 1$, let $\ell_p$ be the smallest positive integer such that 
\begin{align}\label{r_p}
    r_p:=K_3 \sigma(2^{-\ell_p}) \sqrt{\ell_p} \le R_{4p} = 2^{-2^{2^{4p}}},
\end{align}
where $K_3$ is the constant in \eqref{Omega_p3}.
It follows that, for some constants $K_5>K_6>0$,
\begin{align}\label{E:ell}
    K_6 2^{2^{4p}} \le \ell_p \le K_5 2^{2^{4p}}.
\end{align}
Let $\mathscr{H}_p$ be the family of all dyadic cubes $Q$ of order $\ell_p$ in $I_{1/2}$ such that $X(Q)$ intersects $X(I_{1/2}) \setminus \bigcup_{B \in \mathscr{G}_p} B$
(and thus $\{ X(Q) : Q \in \mathscr{H}_p \}$ covers $X(I) \setminus \bigcup_{B \in \mathscr{G}_p} B$ ),
%%%%%%%%%%%%%%%%%%%%Justification%%%%%%%%%%%%%%%%%%%%%%%%%
%if $t\in I$ such that $X(t) \notin \bigcup_{B \in \mathscr{G}_p} B$, then necessarily $X(t)\in X(Q_t)\cap X(I_{1/2}) \setminus \bigcup_{B \in \mathscr{G}_p} B\neq \varnothing$ where $Q_t$ is the unique dyadic cube of order $\ell_p$ in $I_{1/2}$ containing $t$. Hence $Q_t\in \mathcal{H}_p$. So $X(I) \setminus \bigcup_{B \in \mathscr{G}_p} B$ is covered by $\bigcup_{Q\in \mathcal{H}_p}X(Q)$,
%%%%%%%%%%%%%%%%%%%%%%%%%%%%%%%%%%%%%%%%%%%%%%%%%%%%%%%%%
and let $t_Q$ denote the center of $Q$.
On $\Omega_{p,3}$, for every $Q \in \mathscr{H}_p$,
\begin{align}\label{mc:Q}
    \sup_{t,s \in Q} |X(t)-X(s)| \le K_3 \sigma(2^{-\ell_p}) \sqrt{\ell_p} = r_p \le R_{4p},
\end{align}
and thus $X(Q)$ can be covered by the closed ball $B(X(t_Q), r_p)$ in 
$\R^d$. 

\noindent
{\bf Claim:} Every cube $Q \in \mathscr{H}_p$ is contained in 
$I_1\setminus V_p$. 

\noindent
{\it Proof of the {\bf Claim}.}
Suppose towards a contradiction that there exists a point $t \in Q \cap V_p$.
Then by the definition of $V_p$ there exists $r \in [R_{4p}, R_{2p}]$ 
such that 
\[
    \lambda_N\{s \in I_1: X(s) \in B(X(t),4r) \} \ge c_0 n_0 r^d\Psi(r).
\]
Let $\widetilde{A}:=B(X(t),4r)$ with $r_{\widetilde{A}}:=4r$. Since $r_{\widetilde{A}}\in [R_{4p},4R_{2p}]$, then by \eqref{c_2} we have 
\[
    \lambda_N\{ s \in I_1 : X(s)\in \widetilde{A} \}\ge c_2 n_0 r^d_{\widetilde{A}}\Psi(r_{\widetilde{A}}).
\] 
Case 1: If $\widetilde{A}\cap \bigcup_{B \in \mathscr{F}_{p,1}''} B = \varnothing$, then $\widetilde{A}$ belongs to $\mathscr{F}_{p,2}$ and hence $\widetilde{A}\subset \bigcup_{A\in \mathscr{F}_{p,2}'} 5 A$.\\
Case 2: If $\widetilde{A}\cap \bigcup_{B \in \mathscr{F}_{p,1}''} B \neq \varnothing$, then $\widetilde{A}\cap 5A\neq \varnothing$ for some $A\in \mathscr{F}'_{p,1}$, so we can find some $x^* \in \widetilde{A}\cap 5A$.
Since $r\leq R_{2p}\leq r_A$, for every $x\in \widetilde{A}$, we have 
\[
    |x-x_A|\le |x-X(t)|+|X(t)-x^*|+|x^*-x_A|\le 8r+5r_A\le 13r_A.
\]
This shows that $\widetilde{A}\subset \bigcup_{A\in \mathscr{F}_{p,1}'} 13 A$.
% and hence $\widetilde{A}\subset \bigcup_{A\in \mathscr{F}_{p,1}'} 13 A$, where we have used the fact $r\leq R_{2p}\leq r_A$. %%%%%%%%%If $x\in \widetilde{A}$, then $|x-x_A|\le |x-x'|+|x'-x_A|\le 8r+5r_A\le 13r_A$, where x'\in \widetilde{A}\cap 5A.%%%%%%%%

Combining both cases and recalling the definition of $\mathscr{G}_p$ in \eqref{G_p}, we have $\widetilde{A} \subset \bigcup_{B \in \mathscr{G}_p} B$.
But then \eqref{mc:Q} and $r\in [R_{4p},4R_{2p}]$ imply that $X(Q) \subset B(X(t),4r)=\widetilde{A} \subset \bigcup_{B \in \mathscr{G}_p} B$,
which is a contradiction to the definition of $\mathscr{H}_p$.
Hence, every cube $Q \in \mathscr{H}_p$ must be contained in $I_1\setminus V_p$.
This proves the {\bf Claim}. \qed

From the {\bf Claim}, it follows that 
\begin{align}\label{card:H_p}
    \# \mathscr{H}_p \le C2^{N\ell_p} 2^{-(1+d)2^{4p}} \quad \text{on the event $\Omega_{p,2}$.}
\end{align}
Now, $\mathscr{C}_p:=\mathscr{G}_{p} \cup \{ B(X(t_Q), r_p) : Q \in \mathscr{H}_p \}$ is a family of balls in $\R^d$ with radius at most $13R_p$ that cover $X(I)$.
Recall the function $\phi$ defined in \eqref{Eq:phi_def}.
Recall that on an event of probability 1, $\Omega_p$ occurs for all large $p$.
On this event, it follows from \eqref{sum:F_p1}, \eqref{sum:F_p2}, \eqref{r_p} and \eqref{card:H_p} that for all large $p$,
\begin{align}\label{E:phi-sum}
    \sum_{A \in \mathscr{C}_p} \phi(2r_A) 
    &= \sum_{A \in \mathscr{F}_{p,1}'} \phi(26r_A)
    + \sum_{A \in \mathscr{F}_{p,2}'} \phi(10r_A) + \sum_{Q \in \mathscr{H}_p} \phi(2r_p)\nonumber\\
    & \lesssim \sum_{A \in \mathscr{F}_{p,1}'} \phi(r_A) + \sum_{A \in \mathscr{F}_{p,2}'} \phi(r_A) + \# \mathscr{H}_p \cdot r_p^d \cdot (\log(1/r_p))^{1-\gamma d} \cdot \log\log\log(1/r_p)\nonumber\\
    & \lesssim \lambda_N(I_2) + \lambda_N(I_2) p 2^{-2^p} + 2^{N\ell_p} 2^{-(1+d)2^{4p}} \cdot 2^{-Hd\ell_p} \ell_p^{\gamma d} \ell_p^{d/2} \cdot \ell_p^{1-\gamma d} \cdot \log\log \ell_p \nonumber\\
    &\lesssim \lambda_N(I_2) (1+o(1))+ \ell_p^{-d/2} \log\log \ell_p,
\end{align}
where we have used $N=Hd$ and \eqref{E:ell} to obtain the last inequality. Therefore, with probability 1, for all large $p$,
\begin{align*}
    \sum_{A \in \mathscr{C}_p} \phi(2r_A) 
    & \lesssim \lambda_N(I_2) (1+o(1)) + o(1).
\end{align*}
This shows that $\mathcal{H}^\phi(X(I))<\infty$ a.s.
Since $r^d/\phi(r) = o(1)$ as $r\to0^+$, $X(I)$ has Lebesgue measure 0 a.s.
The proof of Theorem \ref{T:Haus:meas} is complete.
\end{proof}
% \begin{remark}

% \end{remark}

\section{Proof of Theorem \ref{T:polar}}
\label{S:pf:T3}

We start with two auxiliary results, which will be used to prove part (i) and part (ii) of Theorem \ref{T:polar}, respectively.

\begin{lemma}\label{lem:PZ}
    Let $(\mu_n)_{n \ge 1}$ be a sequence of random positive Borel measures on a compact set $I \subset \R^N$.
    Suppose there exist two constants $C_1, C_2 \in (0,\infty)$ such that
    \begin{align}\label{mu:moment:bd}
        \E[\mu_n(I)] \ge C_1 \quad \text{and}\quad
        \E[(\mu_n(I))^2] \le C_2 \quad \text{for all $n \ge 1$.}
    \end{align}
    Then, on an event $\Omega_0$ of probability $\P(\Omega_0)\ge {C_1^2}/{8C_2}$,\ $(\mu_n)_{n \ge 1}$ has a subsequence that converges weakly to a random measure $\mu$ on $I$ such that $\mu(I) \ge C_1/2>0$ on $\Omega_0$.
\end{lemma}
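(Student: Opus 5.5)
The plan is the standard Paley--Zygmund plus tightness argument, with care taken that the subsequence may depend on $\omega$. Apply the Paley--Zygmund inequality \eqref{E:PZ} with $\theta=1/2$ to each $Y_n=\mu_n(I)$. Using \eqref{mu:moment:bd},
\[
\P\{\mu_n(I)\ge C_1/2\}\ \ge\ \P\{\mu_n(I)\ge \tfrac12\E[\mu_n(I)]\}\ \ge\ \frac{(\E[\mu_n(I)])^2}{4\E[(\mu_n(I))^2]}\ \ge\ \frac{C_1^2}{4C_2}.
\]
We cannot intersect these events over $n$. Instead we pass to a limsup event: let $A_n=\{\mu_n(I)\ge C_1/2\}$ and $A=\limsup_n A_n$. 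Since $\P(\bigcup_{m\ge n}A_m)\ge \P(A_n)\ge C_1^2/(4C_2)$ for every $n$, continuity from above gives $\P(A)\ge C_1^2/(4C_2)$. On $A$, $\mu_n(I)\ge C_1/2$ for infinitely many $n$.

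We also need the masses to stay bounded along that subsequence, so that it is tight. By Fatou's lemma, $\E[\liminf_n \mu_n(I)^2]\le C_2$, but this only controls the liminf, not the values along the $n$ with $\mu_n(I)\ge C_1/2$. Cut off instead: let $B_n=\{\mu_n(I)\le M\}$. Chebyshev gives $\P(B_n^c)\le C_2/M^2$, so
\[
\P(A_n\cap B_n)\ \ge\ \frac{C_1^2}{4C_2}-\frac{C_2}{M^2}\ \ge\ \frac{C_1^2}{8C_2}
\]
once $M^2=8C_2^2/C_1^2$. Set $\Omega_0=\limsup_n(A_n\cap B_n)$. The same continuity-from-above argument gives $\P(\Omega_0)\ge C_1^2/(8C_2)$, which is exactly the constant in the statement.

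Fix $\omega\in\Omega_0$. There is an infinite (random) set of indices $n$ along which $C_1/2\le\mu_n(I)\le M$. Since $I$ is compact, the measures with mass at most $M$ form a weak-* sequentially compact set (Banach--Alaoglu together with Riesz representation on $C(I)$, or Prokhorov). So we can extract a further subsequence converging weakly to a finite measure $\mu=\mu(\omega)$ on $I$. Testing against the constant function $1\in C(I)$, which is legitimate because $I$ is compact, gives $\mu(I)=\lim\mu_{n_k}(I)\ge C_1/2$.

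The main point requiring care is this bookkeeping: we need simultaneously "large mass infinitely often" and "bounded mass" along a common subsequence. The truncation $B_n$ handles this at the cost of the factor $1/8$ instead of $1/4$. Measurability of $\mu$ in $\omega$ is not required by the statement, since $\mu$ is only asserted to exist pathwise on $\Omega_0$. If measurability were needed, one could choose the subsequence by a measurable selection, for example via a countable dense set in $C(I)$ and a diagonal argument with first-index choices.
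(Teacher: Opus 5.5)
Your proposal is correct and follows the paper's proof: Paley--Zygmund with $\theta=1/2$, then a second-moment Markov cutoff at level $M$ to get $\P\{C_1/2\le\mu_n(I)\le M\}\ge C_1^2/(8C_2)$, then the limsup event $\Omega_0$, then weak compactness (Prohorov) on the compact set $I$. Your explicit choice $M^2=8C_2^2/C_1^2$ simply makes concrete the paper's ``choose $M$ large enough.''
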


\begin{proof}
This lemma is a folklore and has been utilized by several authors (cf., e.g., \cite{Te86,BLX09}). For easy reference, we provide a proof here.
    
By the Paley--Zygmund inequality \eqref{E:PZ} and \eqref{mu:moment:bd}, for every $n \ge 1$,
    \begin{align*}
        \P\left\{ \mu_n(I) \ge \frac{C_1}{2} \right\} \ge \P\left\{ \mu_n(I) \ge \frac{\E[\mu_n(I)]}{2} \right\}
        \ge \frac{\E[\mu_n(I)]^2}{4\E[(\mu_n(I))^2]} \ge \frac{C_1^2}{4C_2}.
    \end{align*}
    It follows from the preceding and Markov's inequality that for any $M>C_1/2$,
    \begin{align*}
        \P\left\{ \frac{C_1}{2} \le \mu_n(I) \le M \right\} 
        \ge  \P\left\{ \mu_n(I) \ge \frac{C_1}{2} \right\} - \P\left\{ \mu_n(I) > M \right\}
        \ge \frac{C_1^2}{4C_2} - \frac{C_2}{M^2}.
    \end{align*}
    Hence, we may choose $M$ large enough so that
    \begin{align*}
        \P\left\{ \frac{C_1}{2} \le \mu_n(I) \le M \right\} 
        \ge \frac{C_1^2}{8C_2}.
    \end{align*}
    Let
    \begin{align*}
        \Omega_0 = \left\{ \frac{C_1}{2} \le \mu_n(I) \le M \text{ infinitely often} \right\}
    \end{align*}
    Then
    \begin{align*}
        \P(\Omega_0) &= \P\left( \limsup_{n\to\infty} \left\{ \frac{C_1}{2} \le \mu_n(I) \le M \right\} \right) \\
        &\ge \limsup_{n \to \infty} \P\left\{ \frac{C_1}{2} \le \mu_n(I) \le M \right\} 
        \ge \frac{C_1^2}{8C_2} > 0.
    \end{align*}
    On the event $\Omega_0$, $(\mu_n)_{n \ge 1}$ is a sequence of measures whose total variation norms are bounded by $M$ and are tight because they are supported in the compact set $I$.
    Hence, by Prohorov's theorem, $(\mu_n)_{n \ge 1}$ has a subsequence that converges weakly to a measure $\mu$.
    In particular, the weak convergence implies that, on $\Omega_0$,
    \begin{align*}
        \mu(I) = \lim_{n\to\infty} \mu_n(I) \ge C_1/2.
    \end{align*}
    This completes the proof.
\end{proof}

\begin{lemma}\label{lem:X3}
    Let Assumptions \ref{a:X}, \ref{a:var}, and \ref{a:SLND} hold with $\sigma$ given by \eqref{sigma:gamma} with $N=Hd$ and $0<\gamma \le 1/d$.
    Fix a compact interval $I \subset \R^N \setminus \{0\}$ and $t_0 \in I$.
    Consider the Gaussian fields $X^{(1)}$ and $X^{(2)}$ defined by \eqref{X2:X1}.
    Assume that $\alpha$ given by \eqref{X2} satisfies
    \begin{align}\label{alpha}
        1/2 \le \alpha(t) \le 3/2 \quad \text{for all $t \in I$.}
    \end{align}
    Fix $z \in \R^d$ and consider the Gaussian field $X^{(3)}$ defined by \eqref{X3}, i.e.,
    \[
        X^{(3)}(t) = \frac{1}{\alpha(t)}(z-X^{(1)}(t)).
    \]
    Then $X^{(3)}(I)$ has Lebesgue measure 0 a.s.
\end{lemma}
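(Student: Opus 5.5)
We plan to combine the covering built in the proof of Theorem \ref{T:Haus:meas} with the transfer argument used in the proof of Theorem \ref{T:polar:suff}. The covering was built for $X$, but the target here is $X^{(3)}=(z-X^{(1)})/\alpha$. By \eqref{X3-X3}, increments of $X^{(3)}$ are controlled by increments of $X^{(1)}$ plus a smooth error $K_0|t-s|^{\gamma'}(|z|+|X^{(1)}|)$ with $\gamma'=2H\wedge 1$, using \eqref{E:alpha-alpha} and \eqref{alpha}. Moreover $X^{(1)}=X-\alpha(\cdot)X(t_0)$ differs from $X$ by a random multiple of a H\"older function.

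We would therefore rerun the sojourn-time covering of Section \ref{S:pf:T2} directly for $X^{(3)}$ on $I$. For $t\in I$, $s\in B(t,r^\beta)$ and $r\in[R_{4p},R_p]$, the error is at most $C|s-t|^{\gamma'}(|z|+\sup_I|X|)\le C r^{\beta\gamma'}2^{\beta' p}$ on an event of summable failure probability, as with $\Omega_{p,2}$ in Section \ref{S:pf:T1}. We need this error to be $\le r$. Since $\beta\gamma'>1$ once $\beta$ is close to $1/H$, this holds because $\beta\gamma'\to \gamma'/H>1$ when $H<1$. The polynomial factor $2^{\beta' p}$ is negligible, since $r\le R_p=2^{-2^{2^p}}$.

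With this in hand, the following implication holds on that event. If $\lambda_N\{s\in B(t,r^\beta):|X(t)-X(s)|\le 4r\}\ge c_0 p r^d\Psi(r)$, then the set of $s$ with $|X^{(3)}(t)-X^{(3)}(s)|\le Cr$ has at least that measure, where $C$ absorbs the factors $2$ from $1/\alpha$ and the $\alpha(t)$ perturbation. Thus $U_p\subset U_p^{(3)}$ and $V_p\subset V_p^{(3)}$, with $4r$ replaced by $Cr$, and $\Omega_{p,1},\Omega_{p,2}$ transfer. The modulus event $\Omega_{p,3}$ transfers likewise, because the extra error $2^{-\gamma'\ell}2^{\beta' p}$ is $\ll\sigma(2^{-\ell})$.

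Repeating the Vitali construction verbatim with $X^{(3)}$ in place of $X$ then gives, a.s.\ for large $p$, a cover $\mathscr{C}_p$ of $X^{(3)}(I)$ by balls of radius $\le CR_p$ satisfying $\sum\phi(2r_A)\lesssim \lambda_N(I_2)+o(1)$. Hence $\mathcal{H}^\phi(X^{(3)}(I))<\infty$. Since $r^d=o(\phi(r))$, the Lebesgue measure is zero: the sum of $r_A^d$ is bounded by $\sup_{r\le CR_p}(r^d/\phi(r))\cdot O(1)\to 0$.

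The main obstacle is the bookkeeping. The Vitali argument uses sets $I_1,I_2$ larger than $I$, where \eqref{alpha} must still hold. We would simply shrink: apply the argument to a slightly smaller interval, or assume \eqref{alpha} on $I_2$ by continuity and cover $I$ by finitely many such pieces. We also have to check that the constants $c_1,c_2$ in \eqref{c_1}, \eqref{c_2} absorb the factor $C$. This works because $\Psi$ and $\log\log\log$ are slowly varying, so $(Cr)^d\Psi(Cr)\asymp r^d\Psi(r)$.
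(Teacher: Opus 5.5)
Your proposal is correct and follows the paper's proof. The paper works on $\{\sup_I|X|\le 2^{\eta p}\}$ and shows $U_p\subset U_p'$ and $V_p\subset V_p'$, with $4r$ replaced by $c_1 r$; choosing $\beta>1/(2H\wedge 1)$ makes $r^{\beta(2H\wedge 1)}2^{\eta p}\lesssim r$. It then transfers $\Omega_{p,3}$ in the same way and reruns the Vitali covering from the proof of Theorem \ref{T:Haus:meas} to get $\mathcal{H}^\phi(X^{(3)}(I))<\infty$. Your remark that \eqref{alpha} and \eqref{E:alpha-alpha} are really needed on the enlarged sets $I_1, I_2$ catches a point the paper passes over silently. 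It is fixed by using small fixed thickenings of $I$ in place of $I_{1/2}, I_1, I_2$, which the covering allows because it only needs margins of order $R_p^\beta$.
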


\begin{proof}
Let $R_p$, $\beta_0\vee 1/\gamma<\beta<1/H$, $c_0>0$ and $n_0,p_0 \in \N^+$ be 
given by Lemma \ref{L:P:Talag}, where $\gamma=2H\wedge 1$.
For any $p \in \N^+$, define the random sets $U_p'$, $V_p'$ by
\begin{align*}
    &U_p' = \left\{ t \in I_2 : \exists\, r \in [R_{2p},R_p], \lambda_N\{ s \in B(t,r^{\beta}) : |X^{(3)}(t)-X^{(3)}(s)| \le c_1 r \} \ge c_0 p\, r^d \Psi(r) \right\},\\
    &V_p' = \left\{ t \in I_1 : \exists\, r \in [R_{4p},R_{2p}], \lambda_N\{ s \in B(t,r^{\beta}) : |X^{(3)}(t)-X^{(3)}(s)| \le c_1r \} \ge c_0 n_0 r^d \Psi(r) \right\},
\end{align*}
where $c_1>0$ is a constant to be specified later (see \eqref{E:X3-X3-Up'} below). 
Recall the events $\Omega_{p,1}$ and $\Omega_{p,2}$ defined in \eqref{Omegap1:Omegap2}.
Fix $\eta>0$ and define the events $\Omega_{p,0}$, $\Omega'_{p,1}$ and $\Omega'_{p,2}$ by
\begin{align*}
    &\Omega_{p,0} = \{ \textstyle\sup_{t \in I}|X(t)| \le 2^{\eta p} \},\\
    &\Omega'_{p,1} = \{ \lambda_N(U'_p) \ge (1-2^{-2^p}) \lambda_N(I_2) \},\\
    &\Omega'_{p,2} = \{ \lambda_N(V'_p) \ge (1-2^{-(1+d)2^{4p}}) \lambda_N(I_1) \}.
\end{align*}
Then $\Omega_{p,0}\cap \Omega_{p,1}\subset \Omega'_{p,1}$ for $p$ sufficiently large.  Indeed, if $t \in U_p$ and $\Omega_{p,0}\cap \Omega_{p,1}$ occurs for $p$ large, then there exists $r \in [R_{2p},R_p]$ such that 
$$\lambda_N\{ s \in B(t,r^{\beta}) : |X(t)-X(s)| \le 4r \} \ge c_0 p\, r^d \,\Psi(r).$$ 
If $s\in B(t,r^{\beta})$ such that $|X(t)-X(s)|\leq 4r$, then by \eqref{E:alpha-alpha}, \eqref{E:X2-X2}, \eqref{X3-X3}, and \eqref{alpha}, we have
\begin{align}\begin{split}\label{E:X3-X3-Up'}
    |X^{(3)}(t)-X^{(3)}(s)|&\leq 4K_0(|z|+2^{\eta p})r^{\beta \gamma}+ 2(4r+K_0\,r^{\beta\gamma}2^{\eta p})\\
    &\leq K_0\big({4|z|}+6\big)\,r+{8}\,r=:c_1\,r. 
\end{split}\end{align}
This shows that $U_p \subset U'_p$, hence verifying that $\Omega_{p,0}\cap \Omega_{p,1}\subset \Omega'_{p,1}$ for $p$ large. Similarly, we have $\Omega_{p,0}\cap \Omega_{p,2}\subset \Omega'_{p,2}$ for $p$ sufficiently large. Next, recall the events $\Omega_{p,3}$ and  $\mathcal{E}_\ell$ in \eqref{Omega_p3}, and consider the event
\begin{align*}
% \label{Omega'_p3}
    \Omega'_{p,3} = \bigcap_{\ell = p}^\infty \mathcal{E}'_\ell, \quad \text{where } 
    \mathcal{E}'_\ell = \left\{ \sup_{t,s\in I: |t-s|\le \sqrt{N} 2^{-\ell}} |X^{(3)}(t)-X^{(3)}(s)| \le K_3' \sigma(2^{-\ell}) \sqrt{\ell} \right\},
\end{align*}
where the constant $K_3'>0$ can be chosen so that $\Omega_{p,0}\, \cap\,\mathcal{E}_{\ell}\subset \mathcal{E}'_{\ell}$ for all $\ell\ge p$. Then $\Omega_{p,0}\,\cap \Omega_{p,3}\subset \Omega'_{p,3}$ for $p$ large. Let $\Omega'_p = \Omega'_{p,1} \cap \Omega'_{p,2} \cap \Omega'_{p,3}$. 
Then
\[
    \sum_{p=1}^\infty \P\{(\Omega'_p)^c\} < \infty.
\]
By the Borel--Cantelli lemma, with probability 1, $\Omega'_p$ occurs for all sufficiently large $p$.

Following the same steps of \eqref{A:F1}, \dots, \eqref{E:phi-sum} in the proof of Theorem \ref{T:Haus:meas}, we obtain a family of balls in $\R^d$, $\mathscr{C}_p:= \{ B(x_j, r_j) : j\in J\}$ with radius at most $c_2R_p$ (here $c_2$ is a constant depending on $c_1$ defined above) that cover $X^{(3)}(I)$ such that for all large $p$,
\begin{align*}
\sum_{j\in J}\phi(2r_i)\lesssim \lambda_N(I_2)(1+o(1))+ o(1).
\end{align*}
This shows that $\mathcal{H}^{\phi}(X^{(3)}(I))<\infty$ a.s. Since $r^d/\phi(r)=o(1)$ as $r\to0^+$, $X^{(3)}(I)$ has Lebesgue measure 0 a.s. The proof is complete.
\end{proof}

\begin{proof}[Proof of Theorem \ref{T:polar}]
(i). Suppose \eqref{E:int:cond} fails. Let $I = [\delta_0/2,\delta_0]^N$.
Then, by using polar coordinates, we see that
\begin{align}\label{iint:finite}
    \int_I \int_I \frac{dt\, ds}{\sigma^d(|t-s|)} < \infty.
\end{align}
% It is known \cite{GH80} that the above condition implies that a square-integrable local time $L(z,I)$ exists and $L(z,\cdot \cap I)$ defines a non-trivial measure on $X^{-1}\{z\} \cap I$.
% Then $\{z\}$ is not polar for $\{X(t), t \in \R^N \setminus\{0\} \}$.
We will prove that $\{z\}$ is not polar by constructing a measure that is supported on the level set $X^{-1}\{z\} \cap I$ and is non-trivial with positive probability.
For each $n \in \N$ and for each Borel subset $A$ of $I$, define
\begin{align}\begin{split}\label{mu_n}
	\mu_n(A) &:= \int_A (2\pi n)^{d/2} \exp\left(- \frac{n|X(t)-z|^2}{2}\right) dt\\
	&= \int_A \int_{\R^d} \exp\left(-i\xi \cdot (X(t)-z)-\frac{|\xi|^2}{2n}\right) d\xi \, dt,
\end{split}\end{align}
where the last identity can be verified easily using the characteristic
function of a normal distribution. Following \cite[p.185-186]{X09} (see
also \cite[p.13-14]{BLX09}), we deduce that
\begin{align*}
	\E[\mu_n(I)] &= \int_I \int_{\R^d} e^{-i\xi\cdot z} \exp\left(-\frac{|\xi|^2}{2n}\right) \E[e^{-i\xi \cdot X(t)}]\,d\xi\, dt\\
	&= \int_I \int_{\R^d} e^{-i\xi\cdot z} \exp\left(-\frac{(n^{-1} + d^2(t,0))|\xi|^2}{2}\right) d\xi dt\\
	&= \int_I \left( \frac{2\pi}{n^{-1} + d^2(t,0)} \right)^{d/2} \exp\left( - \frac{|z|^2}{2(n^{-1}+d^2(t,0))} \right) dt\\
	&\ge \int_I \left( \frac{2\pi}{1 + c_1\sigma^2(t)} \right)^{d/2} \exp\left( - \frac{|z|^2}{2c_2 \sigma^2(t)} \right) dt,
\end{align*}
where the last line follows from \eqref{variogram} and $d^2(t,0) \ge 
c_2 \sigma^2(t)$ which follows from \eqref{SLND}. Recall from 
\eqref{sigma} that $\sigma$ is continuous on $I$ and takes the form
\begin{align}\label{sigma:LB}
	\sigma(|t|) = |t|^H L(|t|) \ge C_0:=\inf_{s \in I} |s|^H L(|s|) > 0 \quad \text{for all $t \in I=[\delta_0/2,\delta_0]^N$,}
\end{align}
so we can find a positive constant $C_1>0$ such that
\begin{align}\label{E(mu)}
	\E[\mu_n(I)] \ge C_1 \quad \text{for all $n \in \N$.}
\end{align}
Let $I_{2d}$ be the $2d \times 2d$ identity matrix, let $\mathrm{Cov}(X(t),X(s))$ be the $2d\times 2d$ covariance matrix of the Gaussian vector $(X(t),X(s))$, let $\Gamma_n(t,s) = \frac1n I_{2d} + \mathrm{Cov}(X(t),X(s))$, and let $(\xi,\eta)'$ be the transpose of the row vector $(\xi,\eta)$.
Again, following \cite[p.185-186]{X09} (see also \cite[p.13-14]{BLX09}), we also have
\begin{align*}
	\E[(\mu_n(I))^2]
	&= \int_I \int_I \int_{\R^d} \int_{\R^d} e^{-i(\xi,\eta)\cdot (z,z)} \exp\left( - \frac{1}{2} (\xi,\eta) \Gamma_n(t,s) (\xi,\eta)' \right) d\xi \, d\eta \, dt \, ds\\
	&\le \int_I \int_I \int_{\R^d} \int_{\R^d}  e^{ - \frac{1}{2} (\xi,\eta) \mathrm{Cov}(X(t),X(s)) (\xi,\eta)'} d\xi \, d\eta \, dt \, ds\\
	&= \int_I \int_I \frac{(2\pi)^d}{[\det \mathrm{Cov}(X(t),X(s))]^{1/2}}\, dt \, ds\\
	&= \int_I \int_I \frac{(2\pi)^d}{[\mathrm{Var}(X_1(s)) \mathrm{Var}(X_1(t)|X_1(s))]^{d/2}}\, dt \, ds\\
	& \le \frac{(2\pi)^d}{(c_2 C_0^2)^{d/2}\, c_2^{d/2}} \int_I \int_I \frac{1}{\sigma^d(|t-s|)}\, dt\, ds,
\end{align*}
where we have used \eqref{sigma:LB} and \eqref{SLND} to obtain the last line.
By \eqref{iint:finite}, there is a constant $C_2<\infty$ such that
\begin{align}\label{E(mu^2)}
	\E[(\mu_n(I))^2] \le C_2 \quad \text{for all $n \in \N$.}
\end{align}
Thanks to \eqref{E(mu)} and \eqref{E(mu^2)}, we can apply Lemma \ref{lem:PZ} 
to find that there is an event of positive probability on which $(\mu_n)_{n \ge 1}$ has a subsequence that converges weakly to a random measure $\mu$ on $I$, such that $\mu(I) \ge C_1/2>0$ on $\Omega_0$.
This shows that $\mu$ is a non-trivial measure on $I$ with positive probability.
As the weak limit of a subsequence of the measures $(\mu_{n})_{n \ge 1}$, we can observe from the definition \eqref{mu_n} that $\mu$ is supported on the level set $X^{-1}\{z\} \cap I$.
Therefore, this proves that $X^{-1}\{z\} \cap I \ne \varnothing$ with positive probability. 
%\end{proof}

(ii). 
% Assume \eqref{cond:f(sigma*)} and \eqref{cond:f(x)L^d(x) bound}.
% Note that \eqref{cond:f(sigma*)} implies \eqref{intsigma}, which in turn implies $N\le Hd$.
Suppose $\sigma$ is given by \eqref{sigma:gamma}.
Note that, in this case, \eqref{E:int:cond} implies $N \le Hd$ (see \eqref{int:cond:under:gamma}).

Case 1: $N< Hd$. In this case, \eqref{suff-cond} holds.
Therefore, Theorem \ref{T:polar:suff} implies that points are polar for $X$.

Case 2: $N=Hd$.
Fix $z \in \R^d$.
It suffices to show that for any fixed $t_0 \in \R^N \setminus \{0\}$, there is a closed interval $I \subset \R^d \setminus \{0\}$ centered at $t_0$ with diameter $\rho_0>0$ such that
\[
\P\{ \exists\, t \in I, X(t) = z \} = 0.
\]
Consider the Gaussian random fields $X^{(1)}$, $X^{(2)}$, and $X^{(3)}$ defined in \eqref{X2:X1} and \eqref{X3}.
Under the current assumptions, as in \eqref{alpha:UB:LB}, we may choose $\rho_0 \in (0,\delta_0)$ such that
$1/2 \le \alpha(t) \le 3/2$ for all $t_0 \in I$, where $I \subset \R^d \setminus \{0\}$ is the closed interval centered at $t_0$ with diameter $\rho_0$.
Lemma \ref{lem:X3} shows that $X^{(3)}(I)$ has Lebesgue measure 0 a.s.
By Fubini's theorem,
\begin{align*}
\int_{\R^d} \P\{ \exists\, t \in I, X^{(3)}(t) = y \}\, dy 
= \E[\lambda_d(X^{(3)}(I))] = 0.
\end{align*}
This implies that
\begin{align}\label{P0aey}
\P\{ \exists\, t \in I, X^{(3)}(t) = y \} \text{ for almost every } y \in \R^d.
\end{align}
Let $f_0(y)$ denote the probability density function of $X(t_0)$.
Note that $X(t) = z$ if and only if $X^{(3)}(t) = X(t_0)$. 
Also, $X^{(1)}$, and hence $X^{(3)}$, is independent of $X(t_0)$. 
It follows that
\begin{align*}
\P\{ \exists\, t \in I, X(t) = z \} &= \P\{ \exists\, t \in I, X^{(3)}(t) = X(t_0) \}\\
& = \int_{\R^d} \P\{ \exists\, t \in I, X^{(3)}(t) = y \}\, f_0(y)\, dy.
\end{align*}
Using \eqref{P0aey}, we conclude that $\P\{ \exists\, t \in I, X(t) = z \} = 0$.
\end{proof}

\section*{Acknowledgements}
C.Y. Lee was supported in part by the Shenzhen Peacock grant 2025TC0013. Y. Xiao was supported in part by the NSF grant DMS-2153846.


\begin{thebibliography}{99}

\bibitem{A55}
Anderson, T. W. (1955). The integral of a symmetric unimodal function over a symmetric convex set and some probability inequalities. Proc. Amer. Math. Soc. 6, 170--176.

\bibitem{Anh99}
Anh, V. V., Angulo, J. M.  and Ruiz-Medina, M. D.  (1999). Possible 
long-range dependence in fractional random fields.  J. Statist. Plann.
Inference, 80, 95--110.

\bibitem{Berman69}
Berman, S. M. (1969). Local times and sample function properties of 
stationary Gaussian processes. Trans. Amer. Math. Soc. 137, 277--299.

\bibitem{BLX09}
Bierm\'e, H., Lacaux, C. and Xiao, Y. (2009). Hitting probabilities
and the Hausdorff dimension of the inverse images of anisotropic 
Gaussian random fields, Bull. Lond. Math. Soc. 41, no. 2, 253--273.

\bibitem{BGT89}
Bingham, N. H., Goldie, C. M. and Teugels, J. L. (1989). Regular variation. Encyclopedia Math. Appl., 27, Cambridge University Press, Cambridge.

% \bibitem{Borell}
% Borell, C. (1975), The Brunn-Minkowski inequality in Gauss space, Invent. Math. 30, no. 2, 207--216.
\bibitem{DKN07}
Dalang, R. C., Khoshnevisan, D. and Nualart, E. (2007). Hitting 
probabilities for systems of non-linear stochastic heat equations with additive noise. ALEA Lat. Am. J. Probab. Math.
Stat. 3, 231--271.

\bibitem{DKN09}
Dalang, R. C., Khoshnevisan, D. and Nualart, E. (2009). Hitting probabilities for systems for non-linear stochastic heat equations with multiplicative noise. Probab. Theory Related Fields 144, no. 3-4, 371--427. 

\bibitem{DKN13}
Dalang, R. C., Khoshnevisan, D. and Nualart, E. (2013). Hitting probabilities for systems of non-linear stochastic heat equations in spatial dimension $k \ge 1$. Stoch. Partial Differ. Equ. Anal. Comput. 1, no. 1, 94--151,

\bibitem{DMX17}
Dalang, R. C., Mueller, C. and Xiao, Y. (2017). Polarity 
of points for Gaussian random fields. Ann. Probab. 45, no.
6B, 4700--4751. 

\bibitem{DN04}
Dalang, R. C. and Nualart, E. (2004). Potential theory for 
hyperbolic SPDEs. Ann. Probab. 32, no. 3A, 2099--2148,

\bibitem{DS10}
Dalang, R. C. and Sanz-Sol\'e, M. (2010). Criteria for 
hitting probabilities with applications to systems of stochastic wave equations. Bernoulli 16, no. 4, 1343--1368.

\bibitem{DS15}
Dalang, R. C. and Sanz-Sol\'e, M. (2015). Hitting 
probabilities for nonlinear systems of stochastic waves. 
Mem. Amer. Math. Soc. 237, no. 1120,

\bibitem{Dudley}
Dudley, R. M. (1967). The sizes of compact subsets of Hilbert space and continuity of Gaussian processes. J. Functional Analysis 1, 290--330.

\bibitem{EH25} Erraoui, M. and Hakiki, Y. (2025). Fractional Brownian motion with deterministic drift: how critical is drift regularity in hitting probabilities. Math. Proc. Camb. Philos. Soc. 178, no. 1, 103--132.  

\bibitem{Falconer}
Falconer, K. (2013). Fractal geometry: mathematical foundations and applications. John Wiley \& Sons.

% \bibitem{Fernique}
% Fernique, X. (1970), Int\'egrabilit\'e des vecteurs gaussiens, C. R. Acad. Sci. Paris S\'er. A-B 270, A1698--A1699.

\bibitem{FKN11}
Foondun, M., Khoshnevisan, D., and Nualart, E. (2011), 
A local-time correspondence for stochastic partial differential equations. Trans. Amer. Math. Soc., 363, no. 5, 2481--2515.

\bibitem{GH80}
Geman, D. and Horowitz, J. (1980). Occupation densities. Ann. Probab. 8, no. 1, 1--67. 

\bibitem{H86}
Hawkes, J. (1986). Local times as stationary processes. In: From Local Times to Global Geometry. Elworthy, K.D. (Ed.), pp. 111–120, Pitman Research Notes in Mathematics, Vol. 150. Longman, Chicago.

\bibitem{HSWX}
Herrell, R., Song, R., Wu, D. and Xiao, Y. (2020). Sharp 
space-time regularity of the solution to a stochastic heat 
equation driven by a fractional-colored noise. Stoch. Anal.
Appl. 38, 747--768.

\bibitem{HS}
Hinojosa-Calleja, A. and Sanz-Sol\'e, M. (2021). Anisotropic Gaussian random fields: criteria for hitting probabilities and applications. Stoch. Partial Differ. Equ. Anal. Comput. 9, no. 4, 984--1030.

\bibitem{K85}
Kahane, J.-P. (1985). Some Random Series of Functions. 2nd
edition, Cambridge University Press.

\bibitem{Kesten69}
Kesten, H. (1969). Hitting probabilities of single points for processes with stationary independent increments. Amer. Math. Soc., Providence RI.

\bibitem{KS99}
Khoshnevisan, D. and Shi, Z. (1999). Brownian sheet and capacity. Ann. Probab. 27, no. 3, 1135--1159.

\bibitem{KXZa}
Khoshnevisan, D., Xiao, Y. and  Zhong, Y. (2003a). Measuring the range of an 
additive L\'evy process. Ann. Probab. 31, 1097--1141.

\bibitem{KXZb}
Khoshnevisan, D., Xiao, Y. and  Zhong, Y. (2003b).  Local times of additive L\'evy processes. Stoch. Process. Appl. 104, 193--216.

\bibitem{LM}
Lata{\l}a, R. and Matlak, D. (2017). Royen’s proof of the Gaussian correlation inequality, Geometric Aspects of Functional Analysis, Lecture Notes in Math., vol. 2169, Springer,
Cham, pp. 265--275.

\bibitem{LSXY23}
Lee, C. Y., Song, J., Xiao, Y. and Yuan, Y.  (2023).
Hitting probabilities of Gaussian random fields and 
collision of eigenvalues of random matrices. Trans. Amer. 
Math. Soc. 376, no. 6, 4273--4299. 

\bibitem{LX26}
Lee, C. Y. and Xiao, Y. (2026). Hitting probability, thermal capacity, and 
Hausdorff dimension results for the Brownian sheet. Electron. J. Probab. 31, 
no. 26, 1--31.

\bibitem{MarcusRosen06}
Marcus, M. B. and Rosen, J. (2006). Markov Processes, 
Gaussian Processes, and Local Times. Cambridge University 
Press, Cambridge.

\bibitem{Mattila}
Mattila, P. (1995). Geometry of sets and measures in Euclidean spaces. Fractals and rectifiability.
Cambridge Stud. Adv. Math., 44,  Cambridge University Press.

\bibitem{Pitt78}
Pitt, L. D. (1978). Local times for Gaussian random fields.  Indiana Univ. Math.  J., 27, 309--330.

\bibitem{Pitman68}
Pitman, E. J. G.  (1968). On the behavior of the characteristic function of a probability distribution in the neighbourhood of the origin. J. Australian Math. Soc. Series A, 8, 422--443.

\bibitem{Royen}
Royen, T. (2014). A simple proof of the Gaussian correlation conjecture extended to some multivariate gamma distributions, Far East J. Theor. Stat. 48, no. 2, 139--145.

% \bibitem{S68}
% Z. \v{S}id\'ak, On multivariate normal probabilities of rectangles: Their dependence on correlations. {\it Ann. Math. Statist.} {\bf 39} (1968), 1425--1434.

\bibitem{T95}
Talagrand, M. (1995). Hausdorff measure of trajectories of multiparameter fractional Brownian motion. Ann. Probab. 23, no. 2, 767--775.

\bibitem{T98}
Talagrand, M. (1998). Multiple points of trajectories of multiparameter fractional Brownian motion. Probab. Theory Relat. Fields 112, 545--563.

\bibitem{Te86}
Testard, F. (1986). Polarit\'e, points multiples et g\'eom\'etrie de certain processus gaussiens.  Publ. du Laboratoire de Statistique et
Probabilit\'es de l'U.P.S.,  Toulouse, 01 - 86.


\bibitem{X96}
Xiao, Y. (1996). Hausdorff measure of the sample paths of Gaussian random fields. Osaka J. Math.  33, no. 4, 895--913.

\bibitem{X97}
Xiao, Y. (1997). H\"older conditions for the local times and the Hausdorff measure of the level sets of Gaussian random fields. Probab. Theory Related Fields 109, no. 1, 129--157. 

\bibitem{X99}
Xiao, Y. (1999). Hitting probabilities and polar sets for
fractional Brownian motion. Stochastics
Stochastics Rep. 66, no. 1-2, 121--151.

\bibitem{X07}
Xiao, Y. (2007). Strong local nondeterminism and the sample path properties of Gaussian random fields. In: Asymptotic Theory in Probability and Statistics with Applications (Tze Leung Lai, Qiman Shao, Lianfen Qian, editors), pp. 136--176, Higher Education Press, Beijing.

\bibitem{X09}
Xiao, Y. (2009). Sample path properties of anisotropic Gaussian random fields.
In: A minicourse on stochastic partial differential equations. pp. 145--212. Lecture Notes in Math., vol. 1962, Springer, Berlin.

\bibitem{Yaglom1957}
Yaglom, A. M. (1957), Certain types of random fields in n-dimensional space similar to stationary stochastic processes.
Teor. Veroyatnost. i Primenen. 2, 292--338.

\bibitem{Yaglom1987}
Yaglom, A. M. (1987), Correlation theory of stationary and related random functions. Vol. I. Basic results. Springer Ser. Statist. Springer-Verlag, New York.

\end{thebibliography}
\end{document}